\theoremstyle{plain}
\newtheorem{theorem}{Theorem}[section]
\newtheorem{corollary}[theorem]{Corollary}
\newtheorem{lemma}[theorem]{Lemma}
\newtheorem{proposition}[theorem]{Proposition}
\newenvironment{customthm}[1]
  {\innercustomthm}
  {\endinnercustomthm}
\theoremstyle{definition}
\newtheorem{definition}[theorem]{Definition}
\newtheorem{example}[theorem]{Example}
\newtheorem{construction}[theorem]{Construction}
\newtheorem{notation}[theorem]{Notation}
\theoremstyle{remark}
\newtheorem{remark}[theorem]{Remark}
\newtheorem{claim}{Claim}
\newcommand{\claimbox}{$\diamond$}
\newcommand{\N}{\mathbb{N}}
\newcommand{\Z}{\mathbb{Z}}
\newcommand{\R}{\mathbb{R}}
\newcommand{\C}{\mathbb{C}}
\newcommand{\F}{\mathbb{F}}
\newcommand{\id}{\operatorname{id}}
\newcommand{\sign}{\operatorname{sign}}
\newcommand{\LF}{\mathbb{F}[t^{\pm 1}]}
\newcommand{\LR}{\mathbb{R}[t^{\pm 1}]}
\newcommand{\LC}{\mathbb{C}[t^{\pm 1}]}
  \newcommand{\bsm}{\left(\begin{smallmatrix}}
\newcommand{\esm}{\end{smallmatrix}\right)}
\newcommand{\GL}{\operatorname{GL}}
\newcommand{\U}{\operatorname{U}}
\newcommand{\sm}{\setminus}
\newcommand{\PD}{\operatorname{PD}}
\newcommand{\Hom}{\operatorname{Hom}}
\newcommand{\wt}[1]{\widetilde {#1}}
\newcommand{\ol}[1]{\overline {#1}}
\newcommand{\op}[1]{\operatorname{#1}}
\newcommand{\Int}{\operatorname{Int}}
\newcommand{\lf}{\text{lf}}
\newcommand{\cps}{\text{cs}}
\newcommand{\triv}{\text{triv}}
\newcommand{\tr}{\intercal}
\newcommand{\limone}{\operatorname{lim}^1}
\DeclareMathOperator*{\colim}{colim}
\DeclareMathOperator*{\invlim}{lim}
\newcommand{\clim}{ \colim \limits_{\rightarrow} }
\newcommand{\ilim}{ \invlim \limits_{\leftarrow} }
\newcommand{\limd}{ \lim \limits_{\rightarrow} }
\DeclareMathOperator\re{Re}
\begin{document}

\title{Twisted signatures of fibered knots}
\author{Anthony Conway}
\address{Max-Planck-Institut für Mathematik, Bonn, Germany}
\email{anthonyyconway@gmail.com}

\author{Matthias Nagel}
\address{Department of Mathematics, ETH Zurich, Switzerland}
\email{matthias.nagel@math.ethz.ch}

\begin{abstract}
This paper concerns twisted signature invariants of knots and 3-manifolds.
In the fibered case, we reduce the computation of these invariants to the study of the intersection form and monodromy on the twisted homology of the fiber surface.
Along the way, we use rings of power series to obtain new interpretations of the twisted Milnor pairing introduced by Kirk and Livingston.
This allows us to relate these pairings to twisted Blanchfield pairings.
Finally, we study the resulting signature invariants, all of which
are twisted generalisations of the Levine-Tristram signature.
\end{abstract}
\maketitle

\section{Introduction}
In the seventies, Rudolph asked whether the set of algebraic knots is linearly independent in the knot concordance group. 
Evidence for a positive answer to this question has been provided for torus knots~\cite{LitherlandIterated}, using the Levine-Tristram signature, and for larger family of algebraic knots, using Casson-Gordon invariants~\cite{HeddenKirkLivingston} and twisted Blanchfield pairings~\cite{ConwayKimPolitarczyk}.
In a similar vein, Baker has conjectured that if two strongly quasipositive fibered knots~$K_0$ and~$K_1$ are concordant, then they must be equal~\cite{Baker}.
Baker's conjecture has attracted increasingly much attention~\cite{AbeTagami, Misev, MisevSpano, ConwayKimPolitarczyk, HeHubbardTruong}.

In light of these recent developments, our goal is to improve the understanding of twisted signature invariants of fibered knots.
In the abelian case, the Levine-Tristram signature, which is usually defined using Seifert matrices, can also be understood using the Blanchfield pairing or the Milnor pairing.
In the metabelian case, signature invariants are vastly more difficult to compute.
However, while Seifert matrices are not available, signatures 
can nevertheless be extracted from twisted generalisations of 
the aforementioned pairings.
This was carried out for the twisted Blanchfield pairing in~\cite{BorodzikConwayPolitarczyk} (including explicit computations), and this paper investigates the twisted Milnor signatures defined in~\cite{KirkLivingston}; see also~\cite{Nosaka}.

\subsection{The twisted Milnor pairing of fibered \texorpdfstring{$3$--manifolds}{3-manifolds}}
Twisted signature invariants of knots are obtained from their $0$-framed surgery.
For this reason, given a field $\F$
with involution $x \mapsto \overline{x}$, we let $X$ be a closed $3$-manifold with infinite cyclic cover~$X^\infty$ classified by a map~$f \colon X \to S^1$, and let $M = (\F^k, \rho)$ be a unitary representation
with~$\rho \colon \pi_1(X) \to \U(k, \F)$. Here, $\U(k,\F) \subset \GL(k, \F)$ denotes
the subgroup of matrices~$A$ with the property that $A\cdot \ol A^\tr = \id_k$.
We use~$H_1(X^\infty;M)$ to denote the twisted homology of $X^\infty$ with coefficients in $M$. 
Deck transformations equip this $\F$-vector space with the structure of an $\F[t^{\pm 1}]$-module.
Provided~$H_1(X^\infty;M)$ is $\LF$-torsion, Kirk and Livingston~\cite[Section 7]{KirkLivingston} generalised work of Milnor~\cite{MilnorInfiniteCyclic} to define a skew-Hermitian \emph{twisted Milnor pairing}
\[ \mu_{X,M} \colon H_1(X^\infty;M) \times H_{1}(X^\infty;M) \to \F. \]
Before describing~$\mu_{X,M}$ in more detail, we first return to fibered knots.

Let $f\colon X \to S^1$ be a fibered $3$-manifold with fiber surface $\Sigma$. Denote the monodromy by~$\varphi \colon \Sigma \to \Sigma$.
Our first theorem 
expresses~$\mu_{X,M}$ in terms of both the induced automorphism~$\varphi_M \colon H_1(\Sigma;M) \to H_1(\Sigma;M)$ and the twisted intersection form $\lambda_{\Sigma,M}$ on $H_1(\Sigma;M)$. 
We refer to Subsection~\ref{sub:Fibered} for the definition of $\varphi_M$.
To state our theorem, we also use $\iota \colon H_1(\Sigma;M) \to H_1(X^\infty;M)$ to denote the $\F$-linear isomorphism induced by the inclusion.

\begin{customthm}{\ref{thm:MilnorIntersectionMonodromy}}
\label{thm:Main}
Let $f \colon X \to S^1$ be a closed fibered $3$-manifold with fiber surface~$\Sigma$ and monodromy~$\varphi$.
Let $M$ be a unitary representation such that $H_1(X^\infty;M)$ is~$\F[t^{\pm 1}]$-torsion.
Then, for any $k \in \Z$, the twisted Milnor pairing can be expressed~as
\[ \mu_{X,M}(t^k \iota(x),\iota(y))=\lambda_{\Sigma,M}(\varphi_M^{-k}(x),y).  \]
\end{customthm}

Theorem~\ref{thm:MilnorIntersectionMonodromy} therefore reduces the study of the twisted Milnor pairing and its signature invariants to the study of the twisted intersection form  and monodromy of the fiber surface.
We briefly describe a simple example of Theorem~\ref{thm:MilnorIntersectionMonodromy}.
\begin{example}\label{ex:Erle}
Let $K$ be a fibered knot with a Seifert surface $F$ as a fiber surface, and set $\Sigma:=F \cup_\partial D^2$.
The $0$-framed surgery $M_K$ is now a fibered $3$-manifold with fiber surface $\Sigma$.
If $V$ denotes an invertible Seifert matrix for $F$, then the monodromy of $F$ is $V^{-1}V^T$ and the intersection form is $V-V^T$.
For the trivial representation~$M=\C$, the complex Alexander module~$H_1(M_K^\infty;\C)$ is endowed with the classical (untwisted) Milnor pairing $\mu_K$~\cite{MilnorInfiniteCyclic}.
Theorem~\ref{thm:MilnorIntersectionMonodromy} then implies that the symmetric form $b_K(x,y):=\mu_K(tx,y)-\mu_K(x,ty)$ is represented by the matrix
\[  \left( V^{-1} V^\tr \right)^\tr \left( V- V^\tr\right) - \left( V- V^\tr\right)\left( V^{-1} V^\tr\right). \]

As we show in Example~\ref{ex:ErleLT}, this matrix turns out to be similar to $V+V^T$, whose signature is the classical Murasugi signature of a knot.
This recovers a result of Erle~\cite{Erle} in the case of fibered knots and illustrates the type of example that we wish to generalise to the twisted setting. 
\end{example}

\subsection{Consequences for signatures of fibered knots}
Next, we recall how to extract signatures from skew-Hermitian pairings~\cite{MilnorInfiniteCyclic}.
A \emph{skew-isometric structure}~$(H,\mu,t)$ consists of a 
finite dimensional $\F$-vector space $H$, a non-singular skew-Hermitian pairing $\mu \colon H \times H \to \F $ and an isometry $t \colon H \to H$ of $\mu$.
In particular, $H$ is an $\LF$-module.
Signature invariants are obtained by considering the restriction of the following Hermitian form on $H$ to the primary summands of~$H$:
\[ b(x,y):=\mu(tx,y)+ \mu(x,ty). \]
For a $3$-manifold $X$ and a module $M$ as in Theorem~\ref{thm:MilnorIntersectionMonodromy}, Kirk and Livingston considered the skew-Hermitian structure $(H,\mu,t):=(H_1(X^\infty;M),\mu_{X,M},t)$.
While these authors defined signature invariants by relying on the primary decomposition, computations of related signature invariants for low crossing knots did not appear until the preprint of Nosaka~\cite{Nosaka}.
A framework to extract and study signatures from twisted Blanchfield pairings was set up in~\cite{BorodzikConwayPolitarczyk}
and applied to the study of Rudolph's conjecture in~\cite{ConwayKimPolitarczyk}.
We compare these signature invariants in Section~\ref{sec:MilnorBlanchfieldSignatures}.

One advantage of the Milnor pairing over the Blanchfield pairing is that it contains a signature invariant that can be defined without using the primary decomposition (and therefore without computing twisted Alexander polynomials), namely
\[ \sigma(X,M):=\sign(b_{X,M}).\]
Recall from Example~\ref{ex:Erle} that if $X=M_K$ is the $0$-framed surgery along a knot~$K$, and~$M=\C$ the trivial representation,
then $\sigma(X,M)$ is the classical Murasugi signature of the knot $K$~\cite{Erle}. 
On the other hand, considering the primary decomposition recovers (jumps of) the Levine-Tristram signature of $K$~\cite{Matumoto}.

As a corollary of Theorem~\ref{thm:MilnorIntersectionMonodromy}, we obtain the following description of the signature $\sigma(X,M)$ in terms of the fiber surface of a fibered $3$-manifold.

\begin{corollary}\label{cor:SignatureFibered}
Let $f \colon X\to S^1$ a closed fibered $3$-manifold with fiber~$\Sigma$ and monodromy~$\varphi$.
	Let $M$ be a unitary representation such that $H_1(X^\infty;M)$ is~$\F[t^{\pm 1}]$-torsion.
    Then the symmetrization $b_{X,M}$ of the twisted Milnor pairing can be expressed as
\[ b_{X,M}(x,y)=\lambda_{\Sigma,M}(\varphi_M^{-1}(x),y)-\lambda_{\Sigma,M}(x,\varphi_M^{-1}(y)).\]
In particular, the twisted signature $\sigma(X,M)$ can be expressed purely in terms of the fiber surface~$\Sigma$ and the induced map~$\varphi_M$.
\end{corollary}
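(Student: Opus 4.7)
The plan is to apply Theorem~\ref{thm:MilnorIntersectionMonodromy} directly to each of the two summands in the symmetrization $b_{X,M}(x,y)$, combined with the fact that $\iota\colon H_1(\Sigma;M)\to H_1(X^\infty;M)$ is an $\F$-linear isomorphism. Writing $x=\iota(\tilde x)$ and $y=\iota(\tilde y)$ for unique $\tilde x,\tilde y\in H_1(\Sigma;M)$, it suffices to compute both $\mu_{X,M}(t\iota(\tilde x),\iota(\tilde y))$ and $\mu_{X,M}(\iota(\tilde x),t\iota(\tilde y))$ in terms of $\lambda_{\Sigma,M}$ and $\varphi_M$. The first term is covered by Theorem~\ref{thm:MilnorIntersectionMonodromy} with $k=1$ and equals $\lambda_{\Sigma,M}(\varphi_M^{-1}(\tilde x),\tilde y)$.

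For the second term, the variable $t$ sits in the wrong argument, so Theorem~\ref{thm:MilnorIntersectionMonodromy} does not apply verbatim. I would first use that $t$ is an isometry of the skew-Hermitian pairing $\mu_{X,M}$ to rewrite $\mu_{X,M}(\iota(\tilde x),t\iota(\tilde y))=\mu_{X,M}(t^{-1}\iota(\tilde x),\iota(\tilde y))$. Applying Theorem~\ref{thm:MilnorIntersectionMonodromy} with $k=-1$ then gives $\lambda_{\Sigma,M}(\varphi_M(\tilde x),\tilde y)$, and since $\varphi_M$ is an isometry of $\lambda_{\Sigma,M}$, this in turn equals $\lambda_{\Sigma,M}(\tilde x,\varphi_M^{-1}(\tilde y))$. (Equivalently, one could chain the skew-Hermitian symmetries of $\mu_{X,M}$ and $\lambda_{\Sigma,M}$ to swap the arguments in $\mu_{X,M}(ty,x)$ and then unswap in $\lambda_{\Sigma,M}$, arriving at the same expression.) Combining the two summands with the signs dictated by the definition of $b_{X,M}$ and identifying $H_1(X^\infty;M)$ with $H_1(\Sigma;M)$ via $\iota$ produces the displayed formula.

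The signature statement is then immediate: since $\iota$ is an $\F$-linear isomorphism, it transports $b_{X,M}$ to an isometric Hermitian form on $H_1(\Sigma;M)$, and the explicit formula just derived shows that this transported form is built entirely from $\lambda_{\Sigma,M}$ and $\varphi_M$. Consequently $\sigma(X,M)=\sign(b_{X,M})$ depends only on fiber-side data. The only delicate point in the argument is keeping careful track of the Hermitian sign conventions between the skew-Hermitian pairings and the isometry properties of $t$ and $\varphi_M$; no new geometric input is required beyond Theorem~\ref{thm:MilnorIntersectionMonodromy}.
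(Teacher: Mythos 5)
Your proof is correct and matches the intended derivation; the paper does not spell out a proof for this corollary, treating it as an immediate consequence of Theorem~\ref{thm:MilnorIntersectionMonodromy}, which is exactly how you argue. One minor redundancy: the version of Theorem~\ref{thm:MilnorIntersectionMonodromy} proved in Section~\ref{sec:Fibered} is stated with $t^k$ acting on the \emph{second} argument, $\mu_{X,M}(\iota(x),t^k\iota(y))=\lambda_{\Sigma,M}(x,\varphi_M^{-k}(y))$, so the second summand $\mu_{X,M}(\iota(\tilde x),t\iota(\tilde y))$ can be read off directly with $k=1$; the detour you take through the isometry properties of $t$ on $\mu_{X,M}$ and of $\varphi_M$ on $\lambda_{\Sigma,M}$ is needed only if one starts from the introduction's formulation with $t^k$ in the first slot, and of course arrives at the same answer. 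Your appeal to $\varphi_M$ being an isometry of $\lambda_{\Sigma,M}$ is legitimate and in fact also follows from the $k=0$ case of Theorem~\ref{thm:MilnorIntersectionMonodromy} together with the $\Lambda$-linearity of $\iota$ from Construction~\ref{const:FibredHomology}.
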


We outline how these results are relevant to knot concordance.
As mentioned in~\cite[p. 660]{KirkLivingston}, if $H_1(X^\infty;M)$ has no $(t \pm 1)$-primary torsion, $X$ bounds a $4$-manifold $W$ and the coefficient system extends, then the pairing~$b_{X,M}$ must be metabolic (this also follows from Theorem~\ref{thm:BlanchfieldMilnor} and~\cite[Proposition 7.7]{BorodzikConwayPolitarczyk}). 
By \emph{metabolic}, we mean that there is a $t$-invariant subspace $L \subset H_1(X^\infty;M)$ such that~$L=L^\perp$ with respect to $b_{X,M}$; we refer to Subsection~\ref{sub:WittSkewIsom} for details.

We apply this fact to metabelian signature invariants as follows.
Given a knot~$K$, set~$X:=M_n(K)$, where~$M_n(K) \to M_K$ is the $n$-fold cyclic cover of the $0$-framed surgery~$M_K$.
If~$K$ is fibered, then~$M_K$ is fibered with fiber $\Sigma$ a capped-off Seifert surface~$\Sigma=F\cup_\partial D^2$.
We use~$\varphi$ to denote the monodromy of~$F$ (and $\Sigma$), so that~$M_n(K)$ is fibered with fiber~$\Sigma$ and monodromy~$\varphi^n$.
Additionally, we let~$\Sigma_n(K)$ be the $n$-fold branched cover of~$K$.
The infinite cyclic cover of $M_n(K)$ is classified by the projection induced map~$H_1(M_n(K);\Z) \to H_1(M_K;\Z)\cong\Z$, and Casson-Gordon theory~\cite{CassonGordon2} ensures that for a character $\chi \colon H_1(\Sigma_n(K)) \to~\C^*$, the resulting twisted homology $\C$-vector space $H_1(M_n^\infty(K);\C^\chi)$ is finite dimensional, i.e. torsion as a~$\LC$-module.
Here, we used the isomorphism $H_1(M_n(K)) \cong H_1(\Sigma_n(K)) \oplus \Z$ to view $\chi$ a character on $\pi_1(M_n^\infty(K)) \subset \pi_1(M_n(K))$.
We use~$\sigma(M_n(K),\C^\chi)$ to denote the resulting signature.
Combining the work of Casson and Gordon with these considerations and Theorem~\ref{thm:MilnorIntersectionMonodromy} yields the following~result. 

\begin{corollary}\label{cor:CGCorollary}
Let $n$ be a prime power and let $K$ be a slice fibered knot with Seifert surface $F$.
Assume that $H_1(M_n^\infty(K);\C^\chi)$ has no $(t \pm 1)$-primary torsion. 
There exists an invariant metaboliser $G$ of $H_1(\Sigma_n(K);\Z)$ such that for every non-trivial prime power
order character $\chi$ vanishing on $G$,
\[ \sigma(M_n(K),\C^\chi)=0. \]
Furthermore, the twisted signature $\sigma(M_n(K),\C^\chi)$ can be explicitly computed from the induced monodromy~$\varphi_{\C^\chi}$ of $F$ and the twisted intersection form $\lambda_{F,M}$.
\end{corollary}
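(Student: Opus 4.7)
The plan is to combine the Casson--Gordon construction of a metaboliser from a slice disk with the metabolicity criterion for $b_{X,M}$ recalled in the paragraph preceding the corollary, and then to invoke Corollary~\ref{cor:SignatureFibered} for the explicit fiber-surface description. Fix a slice disk $D \subset B^4$ for $K$ and let $W_n \to B^4$ denote the $n$-fold cyclic cover branched along $D$, so that $\partial W_n = \Sigma_n(K)$. A standard half-lives-half-dies argument of Casson and Gordon~\cite{CassonGordon2} produces an invariant metaboliser
\[ G := \ker\bigl( H_1(\Sigma_n(K);\Z) \to H_1(W_n;\Q) \bigr) \]
for the linking form on $H_1(\Sigma_n(K);\Z)$; equivariance under the $\Z/n$ deck transformation action is automatic from the construction.

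Next, let $\chi$ be a non-trivial prime power order character vanishing on $G$, viewed as a character on $\pi_1(M_n(K))$ via the splitting $H_1(M_n(K);\Z)\cong H_1(\Sigma_n(K);\Z)\oplus \Z$. Let $V_n$ denote the $n$-fold cyclic cover of the slice-disk exterior $B^4 \sm \nu(D)$, so that $\partial V_n = M_n(K)$. The prime-power hypotheses on both $n$ and the order of $\chi$ enter precisely here: by the standard Casson--Gordon extension argument, $\chi$ extends to a character on $H_1(V_n;\Z)$, hence $\C^\chi$ extends as a unitary representation over the $4$-manifold $V_n$. Combined with the hypothesis that $H_1(M_n^\infty(K);\C^\chi)$ has no $(t\pm 1)$-primary torsion, the metabolicity criterion from~\cite[p.~660]{KirkLivingston} (equivalently, Theorem~\ref{thm:BlanchfieldMilnor} together with \cite[Proposition~7.7]{BorodzikConwayPolitarczyk}) implies that $b_{M_n(K),\C^\chi}$ is metabolic, so $\sigma(M_n(K),\C^\chi) = \sign(b_{M_n(K),\C^\chi}) = 0$.

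The explicit computation in terms of the fiber surface then follows at once from Corollary~\ref{cor:SignatureFibered}: since $K$ is fibered with Seifert surface $F$, the manifold $M_n(K)$ is fibered with fiber $\Sigma = F\cup_\partial D^2$ and monodromy $\varphi^n$, so Corollary~\ref{cor:SignatureFibered} expresses $b_{M_n(K),\C^\chi}$ entirely in terms of $\lambda_{\Sigma,\C^\chi}$ and $\varphi_{\C^\chi}^{\,n}$, and $\lambda_{\Sigma,\C^\chi}$ is in turn determined by $\lambda_{F,\C^\chi}$ after capping off the boundary.

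The main obstacle in executing this plan is the extension step for $\chi$: one must carefully identify the kernel of $H_1(M_n(K);\Z) \to H_1(V_n;\Z)$ with the extension of $G$ to the direct sum decomposition, and show that a character vanishing on the metaboliser coming from the \emph{branched} cover $W_n$ extends across the \emph{unbranched} cover $V_n$. This is precisely where the prime-power conditions on $n$ and on the order of $\chi$ are indispensable, since the relevant Bockstein and transfer arguments, as well as the analysis of torsion in $H_2(W_n;\Z)$ with $\Z/p^k$ coefficients, require working at a single prime at a time.
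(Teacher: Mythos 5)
Your approach matches the argument the paper sketches in the paragraph preceding the corollary: construct a Casson--Gordon metaboliser from the slice disk, extend $\chi$ over the cyclic cover $V_n$ of the slice-disk exterior using the prime-power hypotheses, invoke the metabolicity criterion stated on \cite[p.~660]{KirkLivingston} (equivalently Theorem~\ref{thm:BlanchfieldMilnor} together with \cite[Proposition~7.7]{BorodzikConwayPolitarczyk}), and then appeal to Corollary~\ref{cor:SignatureFibered} for the explicit formula. The paper supplies no written proof of this corollary, so your proposal fills in exactly the outline the authors indicate.

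There is, however, a genuine error in your construction of the metaboliser. You define
\[ G := \ker\bigl( H_1(\Sigma_n(K);\Z) \to H_1(W_n;\Q) \bigr), \]
but the coefficients on the target must be integral: the Casson--Gordon lemma produces $G := \ker\bigl(H_1(\Sigma_n(K);\Z) \to H_1(W_n;\Z)\bigr)$. Since $n$ is a prime power $p^k$, the branched cover $W_n$ of $B^4$ along the slice disk is a $\Z_{(p)}$-homology ball, so $H_1(W_n;\Q)=0$; your formula would therefore declare all of $H_1(\Sigma_n(K);\Z)$ to be the metaboliser, which is false unless that group vanishes. With $\Z$ coefficients the kernel is a self-annihilating, $\Z[\Z_n]$-invariant subgroup of the correct order, i.e.\ a genuine metaboliser. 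Once this is corrected, the extension step for $\chi$ over $V_n$ (which, as you correctly emphasize, is where both prime-power hypotheses are used) and the remainder of your argument go through as written.
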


In Corollary~\ref{cor:CGCorollary}, an \emph{invariant metaboliser} refers to a $\Z[\Z_n]$-invariant subgroup $G \subset H_1(\Sigma_n(K);\Z)$ such that $G=G^\perp$ with respect to the linking form; we refer to~\cite{CassonGordon1,KirkLivingston} for further details.

\subsection{Definitions of the twisted Milnor pairing}
We now describe the Milnor pairing in more details and how its signatures relate to previously studied metabelian signatures~\cite{KirkLivingston, BorodzikConwayPolitarczyk}. 
Given a $3$-manifold $X$, Milnor used the two ends of~$X^\infty$ to show that $H^2(X^\infty;\F)\cong H^3_{\op{cs}}(X^\infty;\F)$, where $H^\bullet_{\op{cs}}(X^\infty;\F)$ denotes the compactly supported cohomology of~$X^\infty$~\cite[Section 4]{MilnorInfiniteCyclic}.
Since Poincar\'e duality gives~$H^3_{\op{cs}}(X^\infty;\F) \cong H_0(X^\infty;\F)=\F$, Milnor then used this result to define his pairing as the following cup product on cohomology:
\[ \mu_X \colon H^1(X^\infty;\F) \times H^1(X^\infty;\F) \to H^2(X^\infty;\F)\cong H^3_{\op{cs}}(X^\infty;\F)\cong \F. \]
Later, Litherland outlined a definition of $\mu_X$ on homology that involves the locally finite homology of $X^\infty$~\cite{LitherlandCobordism}.
The identification with Milnor's original pairing, though implicit, is not proved.
Litherland then used a \emph{trace map} to relate the (untwisted) Blanchfield pairing $\operatorname{Bl}_X \colon H_1(X^\infty;\F) \times H_1(X^\infty;\F) \to \F(t)/\F[t^{\pm 1}] $ to his version of the Milnor pairing~\cite[Theorem A.1]{LitherlandCobordism}.
More precisely, he showed that $\chi \circ \operatorname{Bl}_X =\mu_X$, where the trace map
\[ \chi \colon \F(t)/\F[t^{\pm 1}] \to \F \]
will be described in more details in Section~\ref{sec:BlanchfieldMilnor}.
Upon defining their twisted Milnor pairing $\mu_{X,M}$, Kirk and Livingston built on Milnor's cohomological construction, rather than Litherlands'~\cite[Section 7]{KirkLivingston}.
Consequently, if one wishes to relate~$\mu_{X,M}$ to the twisted Blanchfield pairing, then one must work with a locally finite version of twisted homology.
A second difficulty then appears: Litherland's argument involves explicit chain computations, and these arguments have to be translated to twisted homology~\cite{FriedlPowell}.

Our idea is therefore to replace the use of twisted cohomology involving the ends of $X^\infty$ by twisted homology of~$X$ with coefficients involving rings of power series.
Featuring prominently in this approach is the ring $\Gamma$ of power series defined by
\[ \Gamma := \F[[t^{\pm 1}]] = \left\{ \sum_{i = -\infty}^{\infty} a_i t^i \right\}.\]  
Milnor's arguments, which involve compactly supported 
cohomology of $X^\infty$ (and its ends), are then replaced by arguments which make use of the twisted homology of $X$ as well as the two rings~$\Lambda:=\F[t^{\pm 1}]  \subset \Gamma^\pm \subset \Gamma$, given by 
\begin{align*}
\Gamma^+ &:= \left\{ p \in \Gamma : p = \sum_{i = k}^{\infty} a_i t^i \text{ for a } k \in \Z \right\} \\
\Gamma^- &:= \left\{ p \in \Gamma : p = \sum_{i = -\infty}^{k} a_i t^i \text{ for a } k \in \Z\right\}.
\end{align*}
Namely, we employ twisted homology with coefficients both in $M(\Gamma):=\Gamma \otimes_\F~M$ as well as in~$M(\Gamma^+)$, $M(\Gamma^-)$ and $M(\Lambda)$, which are analogously defined. 
In Definition~\ref{def:MilnorSignature}, we use this set-up and a Bockstein homomorphism to define a twisted Milnor pairing 
\[ \widetilde{\mu}_{X,M} \colon H_1(X;M(\Lambda)) \times H_1(X;M(\Lambda))  \to \F. \]
While $H_1(X;M(\Lambda))$ is known to be isomorphic to $H_1(X^\infty;M)$~\cite[Theorem~2.1]{KirkLivingston}, the corresponding statements for $M(\Gamma^-),M(\Gamma^+)$ and $M(\Gamma)$ take some more work to establish: they involve the ends of $X^\infty$ and locally finite homology.
As an outgrowth of these considerations, we obtain the following new interpretation of the twisted Milnor pairing.

\begin{theorem}\label{thm:PairingsAgree}
Let $X$ be a closed $3$-manifold with an epimorphism $\pi_1(X) \twoheadrightarrow \Z$, and let $M$ be a unitary representation such that $H_1(X^\infty;M)$ is~$\LF$-torsion.
The twisted Milnor pairing $\mu_{X,M}$ is isometric to $\widetilde{\mu}_{X,M}$: there is an~$\F[t^{\pm 1}]$-linear isomorphism
$\varphi \colon H^1(X^\infty;M)  \to H_1(X;M(\Lambda))$
such that, for all~$x,y \in H^1(X^\infty;M)$
\[ \mu_{X,M}(x,y)=\widetilde{\mu}_{X,M}(\varphi(x),\varphi(y)).\]
\end{theorem}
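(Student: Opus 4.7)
The plan is to construct $\varphi$ as a composition of three natural isomorphisms---Poincar\'e duality on $X^\infty$, a universal coefficient identification, and the Kirk--Livingston isomorphism $H_*(X; M(\Lambda)) \cong H_*(X^\infty; M)$---and then to show that, under $\varphi$, Milnor's cohomological definition of $\mu_{X,M}$ via the two ends of $X^\infty$ corresponds to the homological definition of $\widetilde{\mu}_{X,M}$ via a Bockstein.

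The algebraic backbone is the short exact sequence of $\Lambda$-modules
\[ 0 \to \Lambda \to \Gamma^+ \oplus \Gamma^- \to \Gamma \to 0, \]
given by the diagonal and the difference: exactness at $\Gamma$ uses that any two-sided Laurent series splits as the sum of a bounded-below and a bounded-above tail, while the kernel at the middle consists precisely of series bounded both ways, i.e.\ Laurent polynomials. Tensoring with the unitary module $M$ and feeding the result into the twisted chain complex of $X$ produces a long exact sequence in twisted homology whose connecting map is the Bockstein featuring in the definition of $\widetilde{\mu}_{X,M}$.

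The key lemma I would establish is that $M(\Gamma^\pm)$ computes the locally finite twisted homology of a half-infinite end of $X^\infty$: more precisely, there are natural isomorphisms
\[ H_*(X; M(\Gamma^\pm)) \cong H_*^{\lf}(X^\infty_\pm; M), \qquad H_*(X; M(\Gamma)) \cong H_*^{\lf}(X^\infty; M), \]
where $X^\infty_+$ (respectively $X^\infty_-$) denotes a neighbourhood of the positive (respectively negative) end of $X^\infty$, chosen so that the two overlap on a compact set. Under these identifications the ring-level short exact sequence above becomes the Mayer--Vietoris sequence for the decomposition of $X^\infty$ into its two ends, and the algebraic Bockstein matches Milnor's end-connecting map used to establish $H^2(X^\infty; M) \cong H^3_{\cps}(X^\infty; M) \cong \F$.

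With these pieces in place, the definition of $\varphi$ and the equality of pairings both come from the same commutative diagram: cup products on $X^\infty$ translate through Poincar\'e duality into cap products on $X$ with $M(\Gamma)$-coefficients, which the Bockstein then sends to the desired pairing on $H_1(X; M(\Lambda))$. The $\LF$-torsion hypothesis ensures that the relevant classes admit the $\Gamma^\pm$-lifts required to run this diagram. The main obstacle is the geometric interpretation of $H_*(X; M(\Gamma^\pm))$ as locally finite homology of the ends of $X^\infty$; in particular, one must verify that twisted coefficients, orientation conventions, and the direction of each end are handled consistently so that the resulting signs in the Bockstein match Milnor's signs in the cohomological definition. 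Once this compatibility is checked, naturality of cup and cap products forces $\mu_{X,M}(x,y)=\widetilde{\mu}_{X,M}(\varphi(x),\varphi(y))$.
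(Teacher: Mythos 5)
Your proposal captures the paper's central strategy: the short exact sequence $0 \to \Lambda \to \Gamma^+ \oplus \Gamma^- \to \Gamma \to 0$ and the idea that twisted homology with power-series coefficients encodes the behaviour of $X^\infty$ at its two ends. That much is exactly right, and so is the observation that the whole argument hinges on a Bockstein being an isomorphism because the middle terms vanish. However, your ``key lemma'' is not what the paper proves and, as stated, has a genuine gap. The paper never identifies $H_*(X;M(\Gamma^\pm))$ with the locally finite homology of half-infinite pieces $X^\infty_\pm \subset X^\infty$. Instead it sets $C^\pm_\bullet(X^\infty;M) := \varprojlim_t C_\bullet(X^\infty, L_t;M)$ (respectively $R_t$), i.e.\ chains on \emph{all of $X^\infty$} relative to a shrinking neighbourhood of one end, and shows both this and $H_*(X;M(\Gamma^\pm))$ vanish. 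The vanishing of $C^\pm_\bullet(X^\infty;M)$ crucially uses a translation trick (Claim~\ref{claim:InclusionInducedIsZero2}): cycles are pushed past an unbounded amount in the \emph{other} direction using deck transformations, which one cannot do if one has already confined chains to the half-piece $X^\infty_\pm$. Moreover, the Mayer--Vietoris sequence for a decomposition of $X^\infty$ into two closed halves meeting along a compact slice $\Sigma$ has the term $H_*(\Sigma;M)$ where you need $H_*(X^\infty;M)$, and identifying those requires $\Sigma \hookrightarrow X^\infty$ to induce a twisted homology equivalence---which holds in the fibered case but not in general. The paper instead uses the \emph{relative} Mayer--Vietoris $0 \to C_\bullet(X^\infty;M) \to C_\bullet(X^\infty, L_t;M)\oplus C_\bullet(X^\infty, R_t;M) \to C_\bullet(X^\infty, L_t\cup R_t;M) \to 0$ and takes inverse limits (Lemma~\ref{lem:PMShortExact}), which keeps $C_\bullet(X^\infty;M)$ in the first slot.

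There is a second issue in the construction of $\varphi$: listing ``Poincar\'e duality on $X^\infty$, a universal coefficient identification, and Shapiro's isomorphism'' omits the two ingredients that actually do the work. The paper's isometry is the composite $h\circ\psi_\Gamma$ where $\psi_\Gamma \colon H^1(X^\infty;M) \to H^1(X;M(\Gamma))$ is the averaging map $\phi \mapsto \sum_{[g]} (g^{-1}\cdot\phi)\,t^{\psi(g)}$ of~\eqref{eq:PsiGamma}, and $h = \PD_{X,M(\Lambda)}\circ(\beta_\Gamma)^{-1}$ already involves the Bockstein. Making these explicit is essential: the intermediate cohomological pairing $\widetilde{\mu}^\cup_{X,M}$ on $H^1(X;M(\Gamma))$, Proposition~\ref{prop:CohomologicalPairingsAgree} comparing $\mu_{X,M}$ with $\widetilde{\mu}^\cup_{X,M}$ via $\psi_\Gamma$, and Proposition~\ref{prop:IsometryTwistedPairings} comparing $\widetilde{\mu}^\cup_{X,M}$ with $\widetilde{\mu}_{X,M}$ via $h$, require nontrivial chain-level checks (compatibility of $\psi_\Gamma$ with cup products in Lemma~\ref{lem:PsiCup}, of $\op{dst}$ with cap products and evaluations, of Bocksteins with Poincar\'e duality). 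You flag this as ``the main obstacle,'' which is accurate, but the geometric interpretation of $H_*(X;M(\Gamma^\pm))$ needs to be the relative-chain inverse limit, not locally finite homology of a half, for the argument to go through.
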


To the best of our knowledge, this interpretation of the Milnor pairing is new, even in the untwisted case.
Theorem~\ref{thm:PairingsAgree} is proved in three steps.
Firstly, in Definition~\ref{def:TwistedMilnorPairingCohomology}, we define an additional Milnor pairing $\widetilde{\mu}^\cup_{X,M}$, this time on $H^1(X;M(\Lambda))$.
Secondly, Proposition~\ref{prop:CohomologicalPairingsAgree} relates the Kirk-Livingston pairing $\mu_{X,M}$ on~$H^1(X^\infty;M)$ to $\widetilde{\mu}^\cup_{X,M}$; this makes use of the fact that both pairings are cohomological.
Thirdly, Proposition~\ref{prop:IsometryTwistedPairings} relates $\widetilde{\mu}^\cup_{X,M}$ to the pairing~$\widetilde{\mu}_{X,M}$ on $H_1(X;M(\Lambda))$; here since both pairings are defined via power series, the argument boils down to relations between twisted cap and cup products.

\begin{remark}\label{rem:Module}
Since $\Lambda:=\LF$ is a PID, a $\Lambda$-module is $\Lambda$-torsion if and only if it is finite dimensional as an $\F$-vector space.
In particular, the assumptions in Theorems~\ref{thm:MilnorIntersectionMonodromy} and~\ref{thm:PairingsAgree} are equivalent to requiring that the twisted Alexander polynomial $\operatorname{ord}_\Lambda H_1(X;M(\Lambda))$ is non-zero.
\end{remark}

These various definitions of $\mu_{X,M}$ are useful depending on the setting.
For instance, while Theorem~\ref{thm:MilnorIntersectionMonodromy} requires geometric input from the infinite cyclic cover (and in fact a fourth definition of $\mu_{X,M}$; see Definition~\ref{def:GeometricTwistedMilnor}), the relation to the twisted Blanchfield pairing is best understood from the purely algebraic description of $\widetilde{\mu}_{X,M}$.
A related fact: a multivariable version of the Milnor pairing for links currently seems difficult to define (contrarily to the Blanchfield pairing), and we hope that the new perspectives afforded by Theorem~\ref{thm:PairingsAgree} will shed light on this issue and on  algebraic concordance of links.
The univariate case of the Milnor pairing of a link has been studied in~\cite{KawauchiOnQuadratic, KawauchiSignature, Nosaka}.

\subsection{Relation to the twisted Blanchfield pairing}
Our new approach makes the relation between the Milnor pairing and the Blanchfield pairing more transparent, even in the untwisted case.
Indeed, Litherland's trace map $\chi$ factors through~$\Gamma^- \oplus \Gamma^+$, and this fact is particularly well suited to the definition of~$\widetilde{\mu}_{X,M}$. 
In order to state the result in the twisted case and using the same notation as above, we recall that the twisted Blanchfield pairing~\cite{BorodzikConwayPolitarczyk,MillerPowell,PowellBlanchfield} is a sesquilinear Hermitian non-singular pairing
\[ \operatorname{Bl}_{X,M} \colon H_1(X;M(\Lambda)) \times H_1(X;M(\Lambda)) \to \F(t)/\Lambda.  \]
While we will recall the definition of $\operatorname{Bl}_{X,M}$ in Section~\ref{sec:BlanchfieldMilnor}, the upshot of this discussion is that a purely homological argument now relates the twisted Blanchfield pairing to the twisted Milnor pairing.

\begin{theorem}\label{thm:BlanchfieldMilnorIntro}
	Let $X$ be a closed $3$-manifold with an epimorphism $\pi_1(X) \twoheadrightarrow \Z$, and let $M$ be a unitary representation such that $H_1(X^\infty;M)$ is $\LF$-torsion.
The twisted Milnor pairing $\widetilde{\mu}_{X,M}$ is the image of the twisted Blanchfield pairing $ \operatorname{Bl}_{X,M}$ under the trace map: 
\[ \chi \circ \operatorname{Bl}_{X,M} =\widetilde{\mu}_{X,M}.\]
\end{theorem}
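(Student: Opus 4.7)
The plan is to present both the twisted Blanchfield pairing and the new version $\widetilde{\mu}_{X,M}$ of the twisted Milnor pairing as Bockstein-type constructions associated with short exact sequences of coefficient modules, and then to realise the trace map $\chi$ as a natural comparison map between the two constructions.

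The starting point is the pair of short exact sequences of $\Lambda$-modules
\[ 0 \to M(\Lambda) \to M(\F(t)) \to M(\F(t)/\Lambda) \to 0, \]
\[ 0 \to M(\Lambda) \to M(\Gamma^+) \oplus M(\Gamma^-) \to M(\Gamma) \to 0, \]
where in the second sequence the first map is $a \mapsto (a,-a)$ and the second is the sum, so that exactness reflects the identities $\Gamma^+ \cap \Gamma^- = \Lambda$ and $\Gamma^+ + \Gamma^- = \Gamma$. The Blanchfield pairing $\operatorname{Bl}_{X,M}$ arises, via Poincaré duality, from the Bockstein of the first sequence, whereas $\widetilde{\mu}_{X,M}$ was built in Definition~\ref{def:MilnorSignature} from the Bockstein of the second one.

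First, I would construct a morphism of short exact sequences from the first sequence to the second whose leftmost vertical arrow is the identity and whose rightmost vertical arrow recovers $\chi\colon \F(t)/\Lambda \to \F$ (after composition with evaluation at a suitable coefficient). The middle vertical map sends $p \in \F(t)$ to its two formal Laurent expansions, one lying in $\Gamma^+$ and the other in $\Gamma^-$; checking commutativity of the two squares is a purely algebraic statement about the coefficient rings. Next, naturality of the Bockstein produces a commutative ladder in twisted (co)homology with coefficients in $M$. Under the hypothesis that $H_1(X^\infty;M)$ is $\Lambda$-torsion, the homology of $X$ with coefficients in $M(\F(t))$ and in $M(\Gamma)$ vanishes in the relevant degrees (this last vanishing being one of the ingredients in Theorem~\ref{thm:PairingsAgree}), so that both Bocksteins are isomorphisms. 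Combining this ladder with Poincaré duality and with the cap/cup product manipulations already carried out in the proof of Theorem~\ref{thm:PairingsAgree} (in particular Proposition~\ref{prop:IsometryTwistedPairings}) yields the desired identity $\chi \circ \operatorname{Bl}_{X,M} = \widetilde{\mu}_{X,M}$.

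I expect the main obstacle to be the verification that the proposed morphism of short exact sequences genuinely induces the trace map $\chi$ on the right-hand quotients. In the untwisted case this is essentially Litherland's observation that $\chi$ factors through $\Gamma^- \oplus \Gamma^+$, but in the twisted setting one must track the coefficient system through Poincaré duality and through the evaluation against the fundamental class, making sure that sesquilinearity and Hermitian symmetry are preserved. Once this algebraic comparison of coefficient modules is in place, the remainder of the argument is a naturality chase, which contrasts with the chain-level computations of~\cite{LitherlandCobordism, FriedlPowell}.
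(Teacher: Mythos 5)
Your proposal matches the paper's proof almost exactly: the key idea in both is the morphism of short exact coefficient sequences from $0 \to M(\Lambda) \to M(\F(t)) \to M(\F(t)/\Lambda) \to 0$ to $0 \to M(\Lambda) \to M(\Gamma^-)\oplus M(\Gamma^+) \to M(\Gamma) \to 0$ with vertical maps $\id$, $(i_-,i_+)$, and $i_+-i_-$, followed by naturality of the Bockstein and of the remaining steps (Poincar\'e duality via Lemma~\ref{lem:PDCoeff}, the evaluation map) under change of coefficients, so that the trace $\chi = \op{const}\circ(i_+-i_-)$ enters as the rightmost vertical arrow postcomposed with $\op{const}$. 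One small simplification over what you sketch: there is no need to invoke the cap/cup product manipulations of Proposition~\ref{prop:IsometryTwistedPairings} here, because both $\widetilde{\mu}_{X,M}$ and $\op{Bl}_{X,M}$ are already given by the same three-step homological recipe (inverse Bockstein, then Poincar\'e duality, then evaluation), so the comparison is a direct naturality chase through those steps without ever passing to the cohomological description of either pairing.
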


Theorem~\ref{thm:BlanchfieldMilnorIntro} generalises Litherland's result~\cite[Theorem A.1]{LitherlandCobordism} to the twisted setting.
Furthermore, since the twisted Blanchfield pairing is known to be Hermitian and non-singular (since $X$ is closed)~\cite{PowellBlanchfield}, Theorem~\ref{thm:BlanchfieldMilnorIntro} and the properties of the trace map provide a quick proof that the twisted Milnor pairing is skew-Hermitian and non-singular; see Corollary~\ref{cor:SkewSym}.
While this fact is not obvious from the definition (in fact from neither of our definitions), it was already known to Kirk and Livingston and stated in~\cite[Theorem 7.1]{KirkLivingston}. 
As noted by Nosaka~\cite[Subsection 2.2 and Example 4.2]{Nosaka}, twisted Milnor pairings can be singular for manifolds with boundary.
This is explained by the fact that the inclusion induced map~$H_1(X;M) \to H_1(X,\partial X;M)$ need not be an isomorphism.

\begin{remark}\label{rem:SatelliteIntro}
As satellite formulas are known for the twisted Blanchfield pairing~\cite[Theorem 7.11]{BorodzikConwayPolitarczyk}, novel satellite formulas for the twisted Milnor pairing (and their signatures) can be deduced from Theorem~\ref{thm:BlanchfieldMilnorIntro}.
This generalises the untwisted version stated in~\cite{KeartonCompound} and proved in~\cite{LivingstonMelvin}.
\end{remark}

\subsection{Twisted Milnor signatures and signatures jumps}
Theorem~\ref{thm:BlanchfieldMilnor} is also the starting result needed to describe the relation between Kirk and Livingston's \textit{twisted Milnor signatures} with the \emph{signatures jumps} of~\cite{BorodzikConwayPolitarczyk}. 
To state our theorem, we recall some terminology and notation from~\cite{BorodzikConwayPolitarczyk}.
If $\F=\R$, then we set $p_\xi(t):=t-2\operatorname{Re}(\xi)+t^{-1}$, with~$\xi \in S^1 \cap \lbrace \operatorname{Im}(z)>0 \rbrace$.
If $\F=\C$, then we set~$p_\xi(t):=t-\xi$ with $\xi \in S^1 \setminus \lbrace 1 \rbrace$.
Such polynomials are referred to as \emph{basic polynomials}.

Let $X$ be a 3-manifold, and let $M$ be a unitary representation.
The work of~\cite{BorodzikConwayPolitarczyk} uses twisted Blanchfield pairings to associate to this data a \emph{signature jump} $\delta \sigma_{X,M}(\xi)$ whose definition we recall in Subsection~\ref{sub:WittLinkingForm}.\footnote{For $\F=\C$, our definition of $\delta \sigma_{X,M}(\xi)$ differs from the one in~\cite{BorodzikConwayPolitarczyk} by a sign; see Remark~\ref{rem:BCPSign}. For $\F=\R$, both definitions agree.}
They key advantage of these signature jumps is that they can be computed if one knows the primary decomposition of~$H_1(X;M)$ and the Blanchfield pairing of any two of its elements; see~\cite[Section 9]{BorodzikConwayPolitarczyk} for an example of this process in the Casson-Gordon setting.
On the other hand, as we alluded to above, the restriction of $b_{X,M}$ to the $p_\xi$-primary components of~$H_1(X;M(\Lambda))$ produces twisted Milnor signatures which we denote by~$\sigma_{(X,M)}(\xi)$.
These are the invariants defined in~\cite[Subsection 7.2]{KirkLivingston} (and which, in the fibered case, are now also known to be computable in terms of the twisted intersection form and monodromy).

\begin{theorem}\label{thm:SignaturesAreEqual}
Let $X$ be a closed $3$-manifold with an epimorphism~$\pi_1(X) \twoheadrightarrow \Z$, and let $M$ be a unitary representation such that $H_1(X^\infty;M)$ is~$\LF$-torsion.
Up to multiplication by a constant, for $\xi \in S^1 \setminus \lbrace 1 \rbrace$, the twisted Milnor signatures agree with the signatures jumps of the twisted Blanchfield pairing:
\[
\sigma_{(X,M)}(\xi)=
\begin{cases}
	-2 (\delta \sigma_{X,M})(\xi)  &\quad \text{ if } \F=\R, \\
\operatorname{sign}(\operatorname{Im}(\xi)) \cdot \delta \sigma_{X,M}(\xi) &\quad \text{ if } \F=\C.  
\end{cases}
\]
\end{theorem}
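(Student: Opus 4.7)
The plan is to bootstrap from Theorem~\ref{thm:BlanchfieldMilnorIntro}, which identifies the twisted Milnor pairing with the trace of the twisted Blanchfield pairing, and to reduce the comparison of signatures to a purely algebraic residue computation on each $p_\xi$-primary summand $H_\xi$ of $H:=H_1(X;M(\Lambda))$. Combining Theorems~\ref{thm:PairingsAgree} and~\ref{thm:BlanchfieldMilnorIntro}, together with the sesquilinearity of $\operatorname{Bl}_{X,M}$ and the identity $\overline t=t^{-1}$, the symmetrisation of the Milnor pairing rewrites as
\[ b_{X,M}(x,y) \;=\; \chi\!\bigl((t+t^{-1})\cdot \operatorname{Bl}_{X,M}(x,y)\bigr). \]
Because both $\operatorname{Bl}_{X,M}$ and the $t$-action respect the primary decomposition, so does $b_{X,M}$; since $\sigma_{(X,M)}(\xi)$ and $\delta\sigma_{X,M}(\xi)$ are defined by restricting to $H_\xi$, the theorem is reduced to a statement on a single primary summand.

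On $H_\xi$ I would exploit the description of the trace map $\chi$ developed in Section~\ref{sec:BlanchfieldMilnor}: restricted to $p_\xi$-primary elements of $\F(t)/\Lambda$, $\chi$ is given by a residue at $\xi$ (and, when $\F=\R$, also at $\overline{\xi}$). A standard algebraic reduction lets one replace $\operatorname{Bl}_{X,M}|_{H_\xi}$ by a diagonal form whose entries are of the shape $\alpha/p_\xi^n$; the comparison then boils down to evaluating $\chi((t+t^{-1})\alpha/p_\xi^n)$ and identifying its sign with that of the corresponding contribution to $\delta\sigma_{X,M}(\xi)$. For $\F=\R$, the identity $t+t^{-1}-2\re(\xi)=p_\xi$ makes $t+t^{-1}$ act on $H_\xi/p_\xi H_\xi$ as the scalar $2\re(\xi)$, and a partial-fraction expansion of $(t+t^{-1})/p_\xi^n$ extracts the universal factor $-2$ predicted by the theorem. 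For $\F=\C$, the involution exchanges the summands associated with $p_\xi=t-\xi$ and $p_{\xi^{-1}}=t-\xi^{-1}$, so the Blanchfield form is sesquilinear across these conjugate pairs; the trace localises to a single residue whose orientation is governed by $\operatorname{sign}(\operatorname{Im}(\xi))$.

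The main obstacle will be the careful bookkeeping of sign and normalisation conventions: matching the residue normalisation of $\chi$ from Section~\ref{sec:BlanchfieldMilnor} with the signature-jump convention of~\cite{BorodzikConwayPolitarczyk}, the sign adjustment in the complex case being the content of Remark~\ref{rem:BCPSign}. Once these conventions are pinned down, the remaining content is an explicit residue calculation on each Jordan block of $H_\xi$ together with the observation that the resulting Hermitian form is precisely (up to the stated constant) the one used to define the signature jump in~\cite{BorodzikConwayPolitarczyk}.
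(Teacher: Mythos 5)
Your high-level plan shares the paper's starting point (use Theorem~\ref{thm:BlanchfieldMilnorIntro} to write $\mu_{X,M}=\chi\circ\operatorname{Bl}_{X,M}$, then localize to the $p_\xi$-primary summand), but there is a concrete error in your key formula that would derail the rest of the computation. The symmetrization in Definition~\ref{def:Symmetrization} is $b_\mu(x,y)=\mu(tx,y)-\mu(x,ty)=\mu((t-t^{-1})x,y)$, so the correct identity is
\[ b_{X,M}(x,y)=\chi\bigl((t-t^{-1})\cdot\operatorname{Bl}_{X,M}(x,y)\bigr), \]
with a \emph{minus} sign. (The plus-sign version in the introduction is a typo: if $\mu$ is skew-Hermitian, then $\mu(tx,y)+\mu(x,ty)$ is again skew-Hermitian, not Hermitian, and has no real-valued signature.) Your observation that $t+t^{-1}\equiv 2\re(\xi)\bmod p_\xi$ in the real case is true but beside the point; the multiplier that actually appears is $t-t^{-1}$, which reduces mod $p_\xi$ to $\xi-\xi^{-1}=2i\operatorname{Im}(\xi)\in\C\cong\LR/p_\xi$, and this is what produces the stated constants. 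A second slip: for $\F=\C$ you say the involution ``exchanges the summands associated with $p_\xi$ and $p_{\xi^{-1}}$.'' In fact $\overline{p_\xi}=(-\bar\xi t^{-1})\,p_\xi$, so $p_\xi$ is weakly symmetric and the $p_\xi$-primary summand is self-dual; the Blanchfield form does not pair distinct conjugate summands.

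Beyond these errors, your plan to do an ``explicit residue calculation on each Jordan block of $H_\xi$'' glosses over the fact that blocks $\LF/p_\xi^n$ with $n>1$ support Hermitian forms of rank $n\cdot\deg p_\xi$, and you would still have to show that even $n$ contributes zero signature and odd $n$ contributes the same as $n=1$. The paper avoids this by proving the statement on Witt classes: Corollary~\ref{cor:SymmSignature} shows $\sigma_{(H,\mu,t)}(\xi)$ is Witt-invariant, Proposition~\ref{prop:WittIsZ} identifies $W(\F(t),\LF,p_\xi)\cong\Z$ with generator $\mathfrak{e}(1,1,\xi,\F)$, and Lemma~\ref{lem:ChiFormToForm} together with Example~\ref{ex:SymmetrizationExamples} evaluates both sides on that generator (this is the content of Theorem~\ref{thm:MainMain}). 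Replacing your Jordan-block bookkeeping with this Witt-theoretic reduction, and correcting the two sign/structure issues above, would bring your proposal in line with a valid proof.
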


In the untwisted case, Kearton~\cite{KeartonSignature} (see also~\cite{LevineMetabolicHyperbolic}) extracted signatures invariants from the classical Blanchfield by a different method from~\cite{BorodzikConwayPolitarczyk} and showed that his signatures coincide with those of Milnor; his proof is topological~\cite[Section 11]{KeartonSignature}. 
Theorem~\ref{thm:SignaturesAreEqual} is obtained by combining Theorem~\ref{thm:MainMain} with Theorem~\ref{thm:BlanchfieldMilnorIntro}.
Theorem~\ref{thm:MainMain} constitutes the core of the proof and is purely algebraic: relying on the theory of Witt groups, we show that the signatures jumps of a linking form $(H, \lambda)$ are related to the signatures of the skew-isometric structure~$(H,\chi \circ \lambda,t)$.
Thanks to Theorem~\ref{thm:BlanchfieldMilnor}, Theorem~\ref{thm:SignaturesAreEqual} then follows by applying this result to the twisted Blanchfield pairing.
Some additional work shows that Kearton's approach to signatures is equivalent to the one in~\cite{BorodzikConwayPolitarczyk}, and in particular, Theorem~\ref{thm:SignaturesAreEqual} recovers Kearton's result in the untwisted case. 

\subsection*{Organisation}
Section~\ref{sec:TwistedHomology} reviews twisted homology and cohomology.
Section~\ref{sec:Milnor} defines the twisted Milnor pairing using coefficient systems involving rings of power series.
Section~\ref{sec:GeometricMilnor} reformulates the twisted Milnor pairing via infinite cyclic covers, proving Theorem~\ref{thm:PairingsAgree}.
Section~\ref{sec:Fibered} describe the twisted Milnor pairing of fibered $3$-manifolds, proving Theorem~\ref{thm:MilnorIntersectionMonodromy}.
Section~\ref{sec:BlanchfieldMilnor} relates the twisted Milnor pairing to the twisted Blanchfield pairing, proving Theorem~\ref{thm:BlanchfieldMilnorIntro}. 
Section~\ref{sec:Isometric} reviews signatures of skew-isometric structures and Section~\ref{sec:MilnorBlanchfieldSignatures} proves Theorem~\ref{thm:SignaturesAreEqual}.

\subsection*{Acknowledgments}
We thank Stefan Friedl for suggesting that we work with rings of power series in the study of the Milnor pairing. This facilitated the comparison with the Blanchfield pairing.
We also thank the University of Geneva and the University of Regensburg at which part of this work was conducted.
AC thanks the Max Planck Institute for Mathematics for its support.
MN gratefully acknowledges support by the SNSF Grant 181199.

\section{Preliminaries}
\label{sec:TwistedHomology}

\subsection{Twisted homology and cohomology}
\label{sub:homomology}
 
We quickly recall some concepts related to modules over a ring~$R$ with an involution.
An \emph{involution} on a ring~$R$ is a map~$r \mapsto \overline r$ fulfilling the properties
\begin{align*}
\overline 1 &= 1&
\overline {rs} &= \ol s \ \ol r\\
\overline {r + s} &= \ol r + \ol s & \ol {\ol r} &= r
\end{align*}
for all $r,s \in R$. Natural examples are group rings~$\Z[G]$ of a group~$G$, which
has involution~$\ol g = g^{-1}$. Given a left $R$--module~$M$, denote the \emph{transposed module} by~$M^\tr$: 
this is a right $R$--module that has the same underlying abelian group as~$M$, but the following right $R$--multiplication:
\begin{align*}
M^\tr \times R \rightarrow M^\tr\\
m \cdot r = \ol r \cdot m.
\end{align*}
The transposed module of a right $R$--module is defined similarly.
The transposed module~$M^\tr$ of an $(R,S)$--bimodule~$M$ is an $(S,R)$--bimodule. 

We will mainly deal will the following chain complexes associated to 
a connected finite CW-complex~$X$ with fundamental group~$\pi_1(X)$. We also consider
a subcomplex $Y \subset X$. We denote by~$p\colon \widetilde{X} \rightarrow X$ the universal cover of~$X$. The left action of~$\pi_1(X)$ on~$\widetilde{X}$ and on~$p^{-1}(Y)$ turns the 
cellular chain complexes~$C_\bullet(\widetilde X)$, $C_\bullet( p^{-1}(Y) )$, and $C_\bullet(\widetilde X, p^{-1}(Y))$
into chain complexes of $\Z[\pi_1(X)]$--modules.

\begin{definition}
\label{def:TwistedHomology}
For a commutative ring~$R$, a~$(R,\mathbb{Z}[\pi_1(X)])$--bimodule~$M$ and
a subspace $Y \subset X$ containing the base point,
define the \emph{twisted (co)chain complexes} as the chain complexes
\begin{align*}
C_\bullet(X,Y;M)&=M \otimes_{\mathbb{Z}[\pi_1(X)]}C_\bullet\left(\widetilde X, p^{-1}(Y) \right),\\
C_\bullet(Y \subset X; M) &= M \otimes_{\Z[\pi_1(X)]} C_\bullet\left(p^{-1}(Y) \right),\\
	C^\bullet(X,Y;M) &=\operatorname{Hom}_{\text{right-}\mathbb{Z}[\pi_1(X)]}\left( {C_\bullet(\widetilde{X},p^{-1}(Y)) }^\tr, M \right),\\
	C^\bullet(Y \subset X;M)& = \operatorname{Hom}_{\text{right-}\mathbb{Z}[\pi_1(X)]}\left( C_\bullet(p^{-1}(Y))^\tr , M \right).
\end{align*}
of left $R$--modules. Denote the corresponding homology
and cohomology left $R$-modules by $H_\bullet(X,Y;M), H_\bullet(Y \subset X; M)$ and
$H^\bullet(X,Y;M), H^\bullet(Y \subset X;M)$.
\end{definition}
Given two $(R, \Z[\pi])$--modules $M$ and $N$, we will often need to take twisted (co)homology with coefficients in the tensor product $M \otimes_{\Z}N^\tr$.
For this to be possible however, we need to endow this left $R$--module with the structure of a right~$\Z[\pi]$--bimodule.
This is done by using the diagonal action, as spelled out in the next remark.

\begin{remark}
\label{rem:Box}
Consider a commutative ring~$R$ and two $(R, \Z[\pi])$--modules $M$ and $N$. 
Since $R$ is commutative, we can consider the abelian group $M \otimes_\Z N$. 
We now equip this $\Z$-module with the structure of an~$(R, \Z[\pi] \times R)$--bimodule, denoted $M\boxtimes N$.
The left and right actions of $R$ are~via 
\[ r \cdot (m \otimes n) \cdot s = ( r \cdot m ) \otimes ( \ol s \cdot n )\]
for $r ,s \in R$ and $m \in M$ and $n \in N$.
The right $\pi$--action is the diagonal action
\[ (m\otimes n) \cdot g = (m \cdot g) \otimes (n \cdot g) \]
for $r \in R$ and $m \in M$ and $n \in N$.
It is to stress the diagonal action that we use the symbol~$\boxtimes$ instead of the more common $\otimes$.
\end{remark}

Let $X$ be a space and let $M$ and $N$ be $(R,\Z[\pi_1(X)])$-bimodules.
As $M \boxtimes~N$ is in particular an $(R, \Z[\pi_1(X)])$-bimodule, we obtain the left $R$-modules $H_\bullet(X;M \boxtimes N)$ and $H^\bullet(X;M \boxtimes N)$.
The right $R$-action on $M \boxtimes N$, turns these left $R$-modules into~$(R,R)$-bimodules.
The next lemma describes $H_0\big(X; A\boxtimes B \big)$.

\begin{lemma}\label{lem:Ccoinvariants}
Let $X$ be a connected space, and let $A$ and $B$ be $(R,\Z[\pi_1(X)])$--bimodules.
	The map~$(m\boxtimes n) \otimes \op{pt} \mapsto m\otimes n$ defines an isomorphism of $(R, R)$--modules
\[ H_0\big(X; A\boxtimes B \big) \to A \otimes_{\Z[\pi_1(X)]} B^\tr.\]
\end{lemma}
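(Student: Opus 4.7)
The plan is to identify both sides of the asserted map with the diagonal coinvariants of $A\otimes_\Z B$ under the group $\pi:=\pi_1(X)$, and then to check that the displayed assignment corresponds to the identity of $A\otimes_\Z B$ under that identification.

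First, I would recall the standard description of $H_0$ with twisted coefficients. Since $X$ is connected, so is the universal cover $\widetilde X$, and $H_0(\widetilde X)\cong\Z$ with trivial left $\Z[\pi]$-module structure, realised by sending the class of any chosen lift $\op{pt}\in\widetilde X$ of the basepoint to $1$. Because $C_\bullet(\widetilde X)$ is a complex of free left $\Z[\pi]$-modules, right-exactness of the tensor product gives, for any $(R,\Z[\pi])$-bimodule $M$, a natural isomorphism of left $R$-modules
\[ H_0(X;M) \;\xrightarrow{\cong}\; M\otimes_{\Z[\pi]}\Z \;=\; M/\langle m - m\cdot g : m\in M,\ g\in\pi\rangle, \qquad m\otimes\op{pt}\mapsto m\otimes 1. \]
Applied to $M=A\boxtimes B$, this presents $H_0(X;A\boxtimes B)$ as the quotient of $A\otimes_\Z B$ by all relations $(a\otimes b)-(a\cdot g)\otimes(b\cdot g)$ with $g\in\pi$.

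Next, I would compute $A\otimes_{\Z[\pi]}B^\tr$ as a quotient of the same abelian group. By definition of the transposed module and the involution $\ol g=g^{-1}$, the left $\pi$-action on $B^\tr$ is $g\cdot b = b\cdot g^{-1}$, so $A\otimes_{\Z[\pi]}B^\tr$ is the quotient of $A\otimes_\Z B$ by the relations $(a\cdot g)\otimes b - a\otimes(b\cdot g^{-1})$ for $g\in\pi$. Substituting $b\mapsto b\cdot g$ rewrites this as $(a\cdot g)\otimes(b\cdot g)-a\otimes b$, which is exactly the relation obtained in the previous paragraph. Hence the identity on $A\otimes_\Z B$ descends to a well-defined isomorphism $H_0(X;A\boxtimes B)\xrightarrow{\cong}A\otimes_{\Z[\pi]}B^\tr$, and its composition with the $H_0$-identification from the first step is exactly the assignment $(m\boxtimes n)\otimes\op{pt}\mapsto m\otimes n$ of the lemma.

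Finally, I would verify compatibility with the $(R,R)$-bimodule structures of Remark~\ref{rem:Box}. Both the left $R$-action on $A\boxtimes B$ (acting through $A$) and the right $R$-action (acting through $B$ via the involution of $R$) commute with the diagonal right $\pi$-action, and therefore descend to each side; under the identification built above they manifestly correspond to the left $R$-action on $A$ and the right $R$-action on $B^\tr$, respectively. There is no real obstacle in this lemma: the only subtlety is keeping straight which factor carries the involution and which side carries the $\pi$-action, and once both quotients are written out explicitly the identification is immediate. No topological input beyond the connectedness of $\widetilde X$ is required.
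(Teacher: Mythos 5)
Your proof is correct and follows essentially the same route as the paper: both identify $H_0(X;A\boxtimes B)$ with the coinvariants $(A\boxtimes B)/\langle m - m\cdot g\rangle$ and then observe that the diagonal-action relations $(a\otimes b)\sim (a\cdot g)\otimes(b\cdot g)$ are precisely the balanced-tensor relations defining $A\otimes_{\Z[\pi_1(X)]}B^\tr$. You spell out the substitution $b\mapsto b\cdot g$ and the bimodule-structure check, which the paper leaves implicit, but the argument is the same.
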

\begin{proof}
	The $(R,R)$-bimodule $H_0\big(X; A \boxtimes B \big)$ is known to coincide with the $(R,R)$-bimodule of coinvariants
$ A\boxtimes B \Big/ \Z \langle 1 -g \colon g \in \pi_1(X)\rangle$~\cite[Chapter VI.3]{HiltonStammbach}.
As~$A\boxtimes B $ carries the diagonal $\Z[\pi_1(X)]$-action (recall Remark~\ref{rem:Box}), we can identify~$a \boxtimes b$ with~$ag \boxtimes bg$, which is exactly the relation needed to build $A \otimes_{\Z[\pi_1(X)] }~B^\tr$. 
\end{proof}
\begin{remark}
	For finitely generated $(R,\Z[\pi])$-bimodules $M$ and $N$, the abelian group~$M\boxtimes N$ is not necessarily a finitely generated left $R$--module, even though it is finitely generated as an $(R, R)$--bimodule. For example, $\C \otimes_\Z \C$ is not a finite dimensional left $\C$--vector space.
\end{remark}

\subsection{The evaluation map}
\label{sub:EvaluationMaps}
We review evaluation maps of twisted cohomology classes on twisted homology classes.
References include~\cite{KirkLivingston,FriedlKim}.
\medbreak
Let~$\pi$ be a group and let $R$ be a commutative ring with involution. 
Let~$M,N$ be $(R, \Z[\pi])$--bimodules, and let $C$ be a chain complex of left $\Z[\pi]$--modules.
The following map is a chain map of chain complexes of left $R$--modules: 
\begin{align}
\otimes_{\Z[\pi]} N^\tr  \colon 
	\operatorname{Hom}_{\text{right-}\Z[\pi]}\left(C^\tr, M \right)
	&\rightarrow \operatorname{Hom}_{\text{right-}R}\left( (N \otimes_{\Z[\pi]}C)^\tr, M \otimes_{\Z[\pi]} N^\tr \right)\\
f &\mapsto  \Big(  n \otimes c \mapsto f(c) \otimes n \Big). \nonumber
\end{align}
By construction of the boundary maps of the dual complex, this induces an $R$--module homomorphism $\big( \otimes_{\Z[\pi]} N^\tr \big)_*$ in homology.
Consider furthermore the following canonical left $R$--module homomorphism:
\begin{equation} 
	\kappa \colon H^i\Big( \operatorname{Hom}_{\text{right-}R}\left( (N \otimes_{\Z[\pi]}C)^\tr, M \otimes_{\Z[\pi]} N^\tr \right)\Big) \rightarrow 
\Hom_{\text{right-}R}\left( H_i\left(  N \otimes_{\Z[\pi]} C \right)^\tr , M \otimes_{\Z[\pi]} N^\tr \right).
\end{equation}
The main definition of this subsection is the following. 
\begin{definition} \label{def:EvaluationMap}
Let~$\pi$ be a group, and let $R$ be a ring with involution. 
Let~$M,N$ be $(R, \Z[\pi])$--bimodules, and let $C$ be a chain complex of left $\Z[\pi]$--modules.
The \emph{evaluation map} is defined as the composition $\op{ev}_N:=\kappa \circ \big( \otimes_{\Z[\pi]} N^\tr \big)_*$:
\[ \op{ev}_N \colon H^i( \operatorname{Hom}_{\text{right-}\Z[\pi]}\left(C^\tr, M \right) ) \to\Hom_{\text{right-}R}\left( H_i\left(  N \otimes_{\Z[\pi]} C \right)^\tr , M \otimes_{\Z[\pi]} N^\tr \right).\]
\end{definition}

In practice, the module $M \otimes_{\Z[\pi]} N^\tr$ is not a convenient target for an evaluation pairing, and so we have to make additional assumptions. Let us assume that~$R = \F$ is a field and~$M = \F^d$. We consider the elements of~$\F^d$ as column vectors. The right action defines a homomorphism~$\rho_M \colon \pi \to \GL(\F, d)$ by the formula~$v \cdot g = \rho_M(g)^\tr v$. We say $\rho_M$ is \emph{unitary} if it takes values in~$\U(\F, d)$. Here~$\U(\F,d) \subset \GL(\F,d)$ consists of the matrices~$A$ with $A \cdot \ol A^\tr = \id_d$. 
By a slight abuse of notation, we also refer to the $(\F, \Z[\pi])$--bimodule~$M$ as a \emph{unitary representation}, if $\rho_M$ is unitary.

Define a sesquilinear pairing~$b \colon M \times M \to \F$ by~$b(v,w) = v^\tr \cdot \ol w$. Since $M$ is unitary, this pairing fulfills the relation~$b(v\cdot g, w\cdot g) = b(v,w)$, and so descends to a linear map~$b \colon M \otimes_{\Z[\pi]} M^\tr \to \F$. Also, note that the adjoint~$M \to \Hom_{\text{right}-\F} (M^\tr, \F)$ is an $\F$--isomorphism, that is $b$ is non-singular.

\begin{remark}
	The fact, that we only consider~$M = \F^d$ and the standard pairing~$b$ is not a restriction. Given an $(\F, \Z[\pi])$--bimodule~$M$ and a non-singular pairing~$b' \colon M \otimes_{\Z[\pi]} M^\tr \to \F$, apply Gram-Schmidt to find an isometry~$(M,b') \cong (\F^d, b)$.
Finally, when explicit reference to the pairing $b$ is needed, we will refer to $(M,b)$ as the unitary representation.
\end{remark}

We conclude this subsection by recalling the well known analogue of the universal coefficient theorem in the twisted setting.
\begin{lemma}\label{lem:nonSingularEval}
	Let~$(M,b)$ be a unitary representation, and let $C$ be a chain complex of left $\Z[\pi]$--modules.
Then the following composition is an isomorphism of left $\F$-vector spaces:
\begin{align*} b_* \circ \op{ev}_M \colon H^i \Big( \operatorname{Hom}_{\op{right-}\Z[\pi]}(C^\tr, M ) \Big) &\to\Hom_{\op{right-}\F}\Big( H_i\left(  M \otimes_{\Z[\pi]} C \right)^\tr , \F \Big) \\
	f &\mapsto \Big( m \otimes c \mapsto b(m,f(c) ) \Big).
\end{align*}
\end{lemma}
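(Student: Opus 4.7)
The plan is to factor the claimed isomorphism through the classical universal coefficient theorem over the field $\F$. The key observation is that, because $(M,b)$ is unitary, the adjoint
\[ \beta \colon M \to \Hom_{\text{right-}\F}(M^\tr, \F), \qquad m \mapsto b(m,-), \]
is an isomorphism of $(\F, \Z[\pi])$-bimodules: non-singularity of $b$ gives an underlying $\F$-linear isomorphism, while the identity $b(mg, ng) = b(m,n)$ (unitarity) combined with the convention $\overline{g} = g^{-1}$ on the transposed module ensures $\beta$ is compatible with the right $\Z[\pi]$-action.

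First I would assemble a chain-level isomorphism. Post-composition with $\beta$ yields an isomorphism of cochain complexes of $\F$-vector spaces
\[ \Hom_{\text{right-}\Z[\pi]}(C^\tr, M) \xrightarrow{\cong} \Hom_{\text{right-}\Z[\pi]}\bigl(C^\tr, \Hom_{\text{right-}\F}(M^\tr, \F) \bigr). \]
The standard tensor-hom adjunction then identifies the right-hand side with
\[ \Hom_{\text{right-}\F}\bigl( (M \otimes_{\Z[\pi]} C)^\tr, \F \bigr), \]
that is, with the $\F$-linear dual of the chain complex $D := M \otimes_{\Z[\pi]} C$. A direct unwinding shows the composite of these two identifications sends a cochain $f$ to the functional $m \otimes c \mapsto b(m, f(c))$, and hence is a map of cochain complexes of $\F$-vector spaces.

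Next, since $\F$ is a field, the classical universal coefficient theorem supplies a natural isomorphism
\[ H^i\bigl( \Hom_\F(D^\tr, \F) \bigr) \xrightarrow{\cong} \Hom_\F\bigl( H_i(D)^\tr, \F \bigr), \]
with no $\operatorname{Ext}$-term, induced on cocycles by evaluation. Composing with the cohomology of the previous chain-level isomorphism produces the desired isomorphism of left $\F$-vector spaces. To conclude, I would trace a cocycle $f \in \Hom_{\text{right-}\Z[\pi]}(C^\tr, M)$ through all three steps: the resulting functional on $H_i(D)^\tr$ sends $[m \otimes c]$ to $b(m, f(c))$, which by Definition~\ref{def:EvaluationMap} and the definition of $b_*$ is exactly $b_* \circ \op{ev}_M([f])$.

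The main bookkeeping obstacle is checking that $\beta$ really descends to a morphism of right $\Z[\pi]$-modules and that the tensor-hom adjunction respects all the right actions and the involutive transposition $(-)^\tr$. Once the conventions are fixed, the rest is formal: the argument becomes an instance of the universal coefficient theorem for chain complexes over a field, transported along the unitary pairing $b$.
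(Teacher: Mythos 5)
Your argument is correct. The paper disposes of this lemma in one line by citing Levine's universal coefficient spectral sequence together with the non-singularity of $b$ and deferring the details to Conway's thesis; you instead give a self-contained proof that avoids the spectral sequence altogether. The two routes rest on the same two inputs (non-degeneracy of $b$ and a universal-coefficients statement), but you organize them differently: you first use the unitary pairing to build the bimodule isomorphism $\beta \colon M \to \Hom_{\text{right-}\F}(M^\tr,\F)$, push it through the tensor--hom adjunction to identify $\Hom_{\text{right-}\Z[\pi]}(C^\tr,M)$ with the plain $\F$-dual of $M\otimes_{\Z[\pi]}C$, and only then invoke universal coefficients, at which point you are working over the field $\F$ and there are no $\operatorname{Ext}$-terms to worry about. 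This is a genuinely more elementary presentation of the same fact: the spectral-sequence approach hides the adjunction step inside the construction of the spectral sequence and then argues that it degenerates, whereas you make the reduction explicit and then need only the exactness of $\Hom_\F(-,\F)$. The bookkeeping checks you flag (that $\beta$ is right-$\Z[\pi]$-equivariant, which is exactly the unitarity identity $b(mg,ng)=b(m,n)$ rewritten with the transposed action, and that the adjunction respects the right actions and $(-)^\tr$) are the real content, and they do go through with the paper's conventions. One small remark: when you trace a cocycle $f$ through the identifications, the evaluation map as defined in Definition~\ref{def:EvaluationMap} followed by $b$ actually produces $m\otimes c\mapsto b(f(c),m)$ rather than $b(m,f(c))$; the discrepancy with the formula printed in the lemma is an order-swap in the paper's statement (the two differ by a conjugate since $b$ is Hermitian, not symmetric) and does not affect the validity of either your argument or the conclusion.
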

\begin{proof}
This is a consequence of the universal coefficient spectral sequence~\cite[Theorem 2.3]{LevineKnotModules} and the fact that the pairing $b$ is non-singular; we refer to~\cite[Theorem 5.4.4]{ConwayThesis} for details.
\end{proof}

\subsection{Twisted cup and cap products}\label{sub:Cup}
In this subsection, we fix our conventions on twisted cup and cap products.
\medbreak

Let $X$ be a space with universal cover $\widetilde{X}$, let $R$ be a commutative ring with involution, and let~$A,B$ be $(R,\Z[\pi])$-bimodules. 
Given an $(i+j)$-chain~$\sigma \in C_{i+j}(\widetilde{X};\Z)$, we use~$\lrcorner \sigma \in C_i(\widetilde{X};\Z)$ and~$\llcorner \sigma \in C_j(\widetilde{X};\Z)$ to denote the front-face $i$-chain and back-face $j$-chain of $\sigma$. 
They are defined on a singular simplex~$s_{i+j} \colon \Delta^{i+j} \to \widetilde X$ as follows and then extended linearly to chains: in barycentric coordinates, define projections~$p_\lrcorner \colon \Delta^{i+j} \to \Delta^i$ by $[v_0, \ldots, v_{i+j}] \mapsto [v_0, \ldots, v_i]$ and $p_\llcorner \colon \Delta^{i+j} \to \Delta^j$ by $[v_0, \ldots, v_{i+j}] \mapsto [v_i, \ldots, v_{i+j}]$. Now define~$\lrcorner s_{i+j} = s_{i+j} \circ p_\lrcorner$ and~$\llcorner s_{i+j} = s_{i+j} \circ p_\llcorner$.
Given twisted cochains $f \in C^i(X;A)=\Hom_{\text{right-}\Z[\pi]}(C_i(\widetilde{X})^\tr,A)$ and~$g \in C^j(X;B)= \Hom_{\text{right-}\Z[\pi]}(C_j(\widetilde{X})^\tr,B)$, a chain $\sigma \in C_{i+j}(\widetilde{X})$, and a twisted chain $b \otimes \sigma' \in C_k(X;B)$, one sets 
\begin{align*} 
 \big( f\cup g \big) (\sigma)&:=f( \lrcorner \sigma) \boxtimes g(   \llcorner \sigma )
	\in A\boxtimes B ,  \\
 f\cap (b \otimes \sigma' )  &:= \big( f( \lrcorner \sigma') \boxtimes b \big) \otimes  \llcorner \sigma' 
	\in C_{k-i}(X; A\boxtimes B). 
	\end{align*}
Using Remark~\ref{rem:Box}, a verification shows that $\cup$ and $\cap$ are $(R,R)$-bilinear.
It can be checked that $\cup$ and $\cap$ are chain maps and descend to (co)homology, leading to the main definition of this subsection. 

\begin{definition} \label{def:TwistedCup}
	Let $X$ be a space, and let $A,B$ be $(R,\Z[\pi])$-bimodules. The chain maps $\cup$ and $\cap$ defined above respectively induce the $(R,R)$-linear \emph{twisted cup product} and \emph{twisted cap product}  
\begin{align*}
\cup \colon H^i(X;A) \otimes_\Z H^j(X;B)^\tr \to H^{i+j}(X;A \boxtimes B), \\
\cap \colon H^i(X;A) \otimes_\Z H_k(X;B)^\tr \to H_{k-i}(X;A \boxtimes B).
\end{align*}
\end{definition}

We note that the relation $\big(\alpha \cup \beta \big) \cap \sigma = \alpha \cap \big( \beta \cap \sigma \big)$ holds between the cap and the cup product, as in the untwisted case~\cite[Theorem~5.2~(3)]{Bredon93}.

\subsection{Twisted Poincaré duality}\label{sub:PD}
Let $X$ be a closed $n$--dimensional manifold with fundamental group~$\pi$, and let~$M$ be an $(R, \Z[\pi])$--bimodule.
In this subsection, we recall Poincar\'e duality between $H^k(X; M)$ and $H_{n-k}(X;M)$. 
\medbreak
Let $\Z^\text{triv}$ be the $\Z[\pi]$--module with underlying abelian group~$\Z$ and equipped with the trivial action. 
Note that the fundamental class $[X]$ is a class in $H_n(X; \Z^\text{triv})$ and that $m \otimes a \mapsto am$ defines a natural identification of $M\boxtimes \Z^\text{triv}$ with $M$ as~$(R, \Z)$--bimodules.
We also write $[X] \in C_n(X)$ for the choice of a chain representative of the fundamental class.
The map $\cap [X] \colon C^\bullet(X;M) \to C_{n-\bullet}(X;M)$ is a ~$\Z[\pi]$-chain equivalence.
Tensoring with the~$(R,\Z[\pi])$-bimodule~$M$ leads to the following chain equivalence of left~$R$-modules:
\[ (\cap [X])_M \colon C^\bullet(X;M) \to C_{n-\bullet}(X;M).\]
This chain equivalence descends to homology, leading to the following definition.

\begin{definition}\label{def:twPoincare}
Let $X$ be a closed $n$--dimensional manifold with fundamental group~$\pi$, and let~$M$ be an $(R, \Z[\pi])$--bimodule.
\emph{Twisted Poincaré duality} is the inverse~$\PD_{X,M}$ of the isomorphism
\[ \cap [X] \colon H^k(X; M) \xrightarrow{\cong} H_{n-k}(X;M).\]
\end{definition}

The next lemma describes Poincar\'e duality when coefficients are changed.
\begin{lemma}\label{lem:PDCoeff}
If $\phi \colon M \rightarrow N$ is a morphism of $(R, \Z[\pi_1(X)])$--bimodules, then the following diagram of chain complexes of left $R$--modules commutes: 
\[ \begin{tikzcd}
C_\bullet(X; M) \ar[d,"\phi"]
& C^{n-\bullet}(X;M )  \ar[d,"\phi"] 	\ar[l, "\textcolor{black}{(\cap [X])_M}"]  \\
C_\bullet(X; N) & C^{n-\bullet}(X;N) \ar[l, "\textcolor{black}{(\cap [X])_N}"] .  
\end{tikzcd} \]
\end{lemma}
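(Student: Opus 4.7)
The plan is a direct naturality verification at the chain level (so the diagram commutes already before passing to (co)homology), by unwinding the cap-product formula from Subsection~\ref{sub:Cup}. The statement is essentially the naturality of cap product with a fixed integral chain, in the coefficient bimodule.

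First, I would describe the vertical maps explicitly. The bimodule morphism $\phi \colon M \to N$ induces a chain map $\phi_\bullet \colon C_\bullet(X;M) \to C_\bullet(X;N)$ defined by $m \otimes c \mapsto \phi(m) \otimes c$, which is well-defined because $\phi$ respects the left $\Z[\pi_1(X)]$-action. Dually, it induces a cochain map $\phi^\bullet \colon C^\bullet(X;M) \to C^\bullet(X;N)$ by post-composition $f \mapsto \phi \circ f$, which is well-defined because $\phi$ also respects the right $\Z[\pi_1(X)]$-action on the transposed modules.

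Second, I would unwind the horizontal maps. Fix a chain representative $[X] = \sum_j \epsilon_j\, \sigma_j \in C_n(\widetilde X;\Z)$ of the fundamental class. Applying Definition~\ref{def:TwistedCup} with $B = \Z^{\text{triv}}$ and $b = 1$, and then using the natural identification $M \boxtimes \Z^{\text{triv}} \cong M$ sending $m \otimes a \mapsto am$, one obtains, for any $f \in C^{n-k}(X;M)$,
\[ (\cap [X])_M(f) \;=\; \sum_j \epsilon_j\, f(\lrcorner \sigma_j) \otimes \llcorner \sigma_j \;\in\; C_k(X;M), \]
together with the analogous formula for $N$ in place of $M$.

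Third, the commutativity check is immediate: applying $\phi_\bullet$ to the expression above replaces the coefficient $f(\lrcorner \sigma_j) \in M$ by $\phi(f(\lrcorner \sigma_j)) = (\phi \circ f)(\lrcorner \sigma_j) \in N$, and this is exactly $(\cap [X])_N(\phi \circ f) = (\cap [X])_N(\phi^\bullet(f))$. Hence the square commutes on (co)chains, and \emph{a fortiori} on (co)homology. I do not expect any serious obstacle; the only small item worth noting is that the identification $M \boxtimes \Z^{\text{triv}} \cong M$ is natural in the bimodule variable, so that $\phi \boxtimes \id_{\Z^{\text{triv}}}$ corresponds to $\phi$ under these identifications, which is automatic since the $\Z^{\text{triv}}$-factor is unchanged.
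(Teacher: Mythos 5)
Your proof is correct and takes essentially the same route as the paper's: both are chain-level naturality checks. The paper states the proof tersely by invoking bifunctoriality of $\otimes$ (viewing $(\cap [X])_M$ as $\id_M \otimes (\cap [X])$ and the coefficient change as $\phi \otimes \id$), whereas you unwind the explicit cap-product formula from Subsection~\ref{sub:Cup} and verify the commutation directly; the latter is somewhat more self-contained. One small notational slip: the representative $[X]$ lives in $C_n(X;\Z^{\text{triv}}) = \Z^{\text{triv}} \otimes_{\Z[\pi_1(X)]} C_n(\widetilde X)$ rather than in $C_n(\widetilde X;\Z)$, so it should be written as $\sum_j 1 \otimes \sigma_j$; this does not affect the computation.
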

\begin{proof}
	This reduces to the fact $\otimes$ is a bifunctor, so that
\[ \left( \phi \otimes \id \right) \circ \left( \id_M \otimes \big(\cap [X] \big) \right) 
= \left( \id_N \otimes \big(\cap [X]\big) \right) \circ \left( \phi \otimes \id \right), \]
which is the commutativity of the diagram. 
\end{proof}

Next, we relate twisted Poincaré duality to Bockstein homomorphisms. 
\begin{lemma}\label{lem:BocksteinPD}
Let $X$ be a closed $n$-dimensional manifold, and let $A$, $B$ and~$C$ be~$(R, \Z[\pi])$--bimodules which fit into a short exact sequence
\[ 0 \to A \to B \to C \to 0. \]
If $\beta_{k+1} \colon H_{k+1}(X; C) \to H_{k}(X; A)$ and $\beta^k \colon H^{k}(X;C) \to H^{k+1}(X; A)$ denote the homology and cohomology Bockstein connecting homomorphisms, then the following diagram commutes:
	\[ \begin{tikzcd}
			H_{k+1}(X; C ) \ar[d, "\PD_{X, C}"]\ar[r, "\beta_{k+1}"] &  \ar[d, "\PD_{X, A}"] H_k(X; A)\\
			H^{n-k-1}(X; C )\ar[r, "\beta^{n-k}"] &  H^{n-k}(X; A).
	\end{tikzcd}\]
\end{lemma}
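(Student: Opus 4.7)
The strategy is to lift everything to the chain level, use naturality of the cap product with $[X]$ in the coefficient module (which is the content of Lemma~\ref{lem:PDCoeff}), and then invoke the naturality of the connecting homomorphism associated to a short exact sequence of chain complexes.

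\textbf{Step 1 (short exact sequences of (co)chain complexes).} Because $X$ is a finite CW complex, the cellular chain complex $C_\bullet(\widetilde X)$ consists of free left $\Z[\pi]$--modules. Consequently, both functors $(-)\otimes_{\Z[\pi]} C_\bullet(\widetilde X)$ and $\Hom_{\text{right-}\Z[\pi]}\bigl(C_\bullet(\widetilde X)^\tr,-\bigr)$ are exact. Applying them to the short exact sequence $0\to A\to B\to C\to 0$ produces short exact sequences of chain complexes of left $R$--modules
\[
0\to C_\bullet(X;A)\to C_\bullet(X;B)\to C_\bullet(X;C)\to 0,
\]
\[
0\to C^\bullet(X;A)\to C^\bullet(X;B)\to C^\bullet(X;C)\to 0,
\]
whose associated long exact sequences realise the homological and cohomological Bocksteins $\beta_{k+1}$ and $\beta^{n-k-1}$ as their connecting homomorphisms.

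\textbf{Step 2 (naturality of $\cap[X]$ in the coefficients).} By Lemma~\ref{lem:PDCoeff} applied to each of the maps $A\to B$ and $B\to C$, the chain equivalence $(\cap[X])_M$ is natural in the coefficient module $M$. Hence the short exact sequences of Step~1 fit into a commutative diagram of short exact sequences of chain complexes, with vertical maps given by $(\cap[X])_A$, $(\cap[X])_B$, $(\cap[X])_C$ (appropriately shifted in degree).

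\textbf{Step 3 (naturality of the connecting homomorphism).} The connecting homomorphism associated to a short exact sequence of chain complexes is natural with respect to chain maps of such sequences. Combining this with Step~2 yields the commutative square
\[
\begin{tikzcd}
H^{n-k-1}(X;C)\ar[r,"\beta^{n-k-1}"]\ar[d,"\cap [X]"'] & H^{n-k}(X;A)\ar[d,"\cap[X]"]\\
H_{k+1}(X;C)\ar[r,"\beta_{k+1}"] & H_{k}(X;A).
\end{tikzcd}
\]

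\textbf{Step 4 (invert to obtain the statement).} By definition, $\PD_{X,M}$ is the inverse of $\cap[X]\colon H^{n-\bullet}(X;M)\to H_\bullet(X;M)$. Inverting the vertical arrows in the square of Step~3 gives exactly the diagram in the statement of Lemma~\ref{lem:BocksteinPD}.

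There is no real obstacle here: once Lemma~\ref{lem:PDCoeff} guarantees naturality of $\cap[X]$ in $M$, the result is a formal consequence of the naturality of connecting homomorphisms. The only mild care needed is bookkeeping the degree shift $(n-k-1)\leftrightarrow k+1$ induced by Poincaré duality in the two columns.
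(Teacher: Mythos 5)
Your proof is correct and follows essentially the same route as the paper: put the two short exact sequences of (co)chain complexes side by side, connect them by the $(\cap[X])_M$ chain maps (which commute with coefficient morphisms by Lemma~\ref{lem:PDCoeff}), and invoke naturality of the connecting homomorphism. The paper's proof is terser but identical in substance; your Steps 1 and 4 simply spell out the freeness-of-$C_\bullet(\widetilde X)$ justification and the final inversion that the paper leaves implicit. One small side remark: the index $\beta^{n-k}$ in the statement's diagram is inconsistent with the declared convention $\beta^k\colon H^k(X;C)\to H^{k+1}(X;A)$; your $\beta^{n-k-1}$ is the correct label for the bottom arrow.
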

\begin{proof}
	The Poincar\'e duality chain equivalences fit into the commutative diagram
	\[ 
	\begin{tikzcd} 
		0 \ar[r]& C_\bullet(X; A)  \ar[r]& C_\bullet(X; B)
		\ar[r]& C_\bullet(X; C) \ar[r]& 0\\
0 \ar[r]& C^{n-\bullet}(X; A) \ar[r]\ar[u, "{(\cap [X])_A}"]& C^{n-\bullet}(X; B) \ar[r]\ar[u, "{(\cap [X])_B}"]& C^{n-\bullet}(X; C) \ar[r] \ar[u, "{(\cap [X])_C}"]& 0
	\end{tikzcd}
	\]
of chain complexes of left $R$-modules.
 Apply functoriality of the connecting homomorphisms to conclude the proof of the lemma.
\end{proof}

\section{The twisted Milnor pairing}
\label{sec:Milnor}

\subsection{Homological version}
\label{sub:TwistedMilnorHomological}
We define the twisted Milnor pairing using homology with coefficients in rings of power series.
\medbreak

Fix a field~$\F$ with an involution and consider the ring~$\F[t^{\pm 1}]$ of Laurent polynomials, which we abbreviate by~$\Lambda$. 
Recall that $\Lambda$ is equipped with the involution~$a_k t^k \mapsto \overline{a_k} t^{-k}$
for $a_k \in \F$ and $k \in \Z$.
Consider the $\Lambda$-module $\Gamma$ of power~series
\[ \Gamma := \F[[t^{\pm 1}]] = \left\{ \sum_{i = -\infty}^{\infty} a_i t^i \right\}.\]  
There are two rings~$\Lambda \subset \Gamma^\pm \subset \Gamma$, namely 
\begin{align}
\label{eq:GammaPlusMinus}
\Gamma^+ &= \left\{ p \in \Gamma : p = \sum_{i = k}^{\infty} a_i t^i \text{ for a } k \in \Z \right\},\\
\Gamma^- &= \left\{ p \in \Gamma : p = \sum_{i = -\infty}^{k} a_i t^i \text{ for a } k \in \Z\right\}. \nonumber
\end{align}

\begin{remark}\label{rem:GammaFields}
The rings~$\Gamma^+$ and $\Gamma^-$ are actually fields: one can use polynomial division to show that an element in~$\Gamma^+$ is invertible, if and only its lowest degree nonzero coefficient~$a_k \in \F$ is invertible~\cite[Section~II.1]{Lang85}. 
        Note that~$\Gamma^-$ is isomorphic to $\Gamma^+$ via the ring isomorphism~$t \mapsto t^{-1}$, and so $\Gamma^-$ is also a field.
Moreover, as right $\Lambda$--modules, the modules~$\Gamma_{\pm}$ are flat~\cite[Proposition 3.2]{Harvey}.
\end{remark}
We record the coefficient system that we shall use throughout this section. 

\begin{construction} \label{cons:LambdaCoeff}
Given a space $X$ and an epimorphism $\psi \colon \pi_1(X) \twoheadrightarrow \Z$, consider 
the cover~$q \colon X^\infty \rightarrow X$ associated to the kernel of~$\psi$.
Given an $(\F, \Z[\pi_1(X)])$--bimodule~$M$, consider the $(\Lambda,\Z[\pi_1(X)])$--bimodule~$M(\Lambda)$ with underlying left $\Lambda$-module~$\Lambda \otimes_\F M$, and where the right $\pi_1(X)$--action is diagonal and defined by
\[(p(t) \otimes m ) \cdot g := p(t) \cdot t^{\psi(g)} \otimes (m \cdot g),\] 
for $g \in G$, $m \in M$ and $p(t) \in \Lambda$. 
We consider the elements of $M(\Lambda)$ as polynomials with coefficients 
in the module~$M$, that is we use the shorthand~$\sum_k m_k t^k$ for~$\sum_k t^k \otimes~m_k$.
We define the $(\Lambda,\Z[\pi_1(X)])$-bimodules~$M(\Gamma^+),M(\Gamma^-)$ and~$M(\Gamma)$ analogously.
Note that~$M(\Gamma^+)$ and~$M(\Gamma^-)$ are flat over $\Lambda$: 
use that~$\Gamma^\pm$ is flat over~$\Lambda$ and that, since~$M \cong \F^d$, there is an isomorphism $M(\Gamma^\pm) \cong \big( \Gamma^\pm \big)^d$ of $\Lambda$-modules.
\end{construction}

Let $X$ be a closed $3$-manifold and let $(M,b)$ a unitary representation
such that the~$\Lambda$-module~$H_1(X;M(\Lambda))$ is torsion.
In order to define the twisted Milnor pairing, consider the $\F$--linear 
 composition 
\begin{align*}
	H_1(X;M(\Lambda)) 
&\xrightarrow{ \big( \beta^\Gamma \big)^{-1} } H_2(X; M(\Gamma)) \\
&\xrightarrow{ \PD_X } H^1(X; M( \Gamma)) \\
&\xrightarrow{ \op{ev}_{M(\Lambda)} } 
\Hom_{\text{right-}\Lambda}\left( H_1(X; M(\Lambda))^\tr , M(\Gamma) \otimes_{\Z[\pi_1(X)]} M(\Lambda)^\tr \right)\\
&\xrightarrow{\wt b_*} \Hom_{\text{right-}\Lambda}\left( H_1(X; M(\Lambda)) , \Gamma \right) \\
&\xrightarrow{( \op{const})_*} \Hom_{\text{right-}\F}\left( H_1(X; M(\Lambda)) , \F \right)
\tag{AlgMil} \label{eqn:MilnorPairing}
\end{align*}
of the five maps that we now define.
Firstly,~$\beta^\Gamma \colon H_{k+1}(X;M(\Gamma)) \to H_k(X;M(\Lambda))$ is the connecting $\Lambda$--homomorphism that arises from the following exact sequence of $\Lambda$--modules 
\[ 0 \to C_\bullet(X; M(\Lambda)) \to C_\bullet(X; M(\Gamma^-)) \oplus C_\bullet(X; M(\Gamma^+)) \to C_\bullet(X;M( \Gamma)) \to 0.\]
This sequence is exact since the $C_k(X; \Z[\pi])$ are free $\Z[\pi]$--modules and the sequence
$0 \to M(\Lambda) \xrightarrow{\bsm 1\\1 \esm} M(\Gamma^-) \oplus M(\Gamma^+) \xrightarrow{\bsm -1&1 \esm} M(\Gamma) \to 0$
is exact; $\beta^\Gamma$ is an isomorphism thanks to our assumptions on $X$ and $M$. 
Secondly, $\operatorname{PD}_X$ is the Poincar\'e duality~$\Lambda$--isomorphism described in Subsection~\ref{sub:PD}.
Thirdly, $\operatorname{ev}_{M(\Lambda)}$ is the $\Lambda$--linear evaluation map described in Subsection~\ref{sub:EvaluationMaps}.
Fourthly, $\wt b \colon M(\Gamma) \otimes_{\Z[\pi]} M(\Lambda)^\tr \to \Gamma$ is the $(\Lambda, \Lambda)$--homomorphism obtained by extending $b$ as
\begin{align*}
	\wt b	(m p(t), n q(t) ) := b(m, n) p(t)q(t^{-1}) .
\end{align*}
Fifthly, the map $\op{const} \colon \Gamma \to \F$ sends a series to 
its constant term: $\sum_k a_i t^i \mapsto a_0$.
This is the only $\F$-linear map in this composition that is not $\Lambda$--linear.

\begin{definition}\label{def:TwistedMilnorPairing}
Let $X$ be a closed $3$-manifold, and let $M$ be 
a unitary representation such that $H_1(X;M(\Lambda))$ is $\Lambda$-torsion.
The \emph{twisted Milnor pairing} 
\[ \widetilde{\mu}_{X,M} \colon H_1(X;M(\Lambda)) \times H_1(X;M(\Lambda)) \to \F \]
is the $\F$--sesquilinear pairing defined by the composition in \eqref{eqn:MilnorPairing}.
\end{definition}

In Corollary~\ref{cor:SkewSym} below, we will show that $\op{Mil}$ is non-singular and skew-Hermitian with respect to the involution of $\F$.

\subsection{The cohomological Milnor pairing with coefficients}\label{sub:TwistedMilnorCoHomological}
We provide a second definition of the Milnor pairing using twisted cup products.
This second interpretation will prove useful when relating Definition~\ref{def:TwistedMilnorPairing} to the definition given by Kirk and Livingston~\cite[Section 7]{KirkLivingston}.
\medbreak

Let $X$ be a closed $3$--manifold, and let $(M,b)$ be a 
unitary representation
such that~$H_1(X;M(\Lambda))$ is $\Lambda$-torsion.
Consider the $\F$--linear map obtained as the composition
\begin{align*}
	H^1(X; M(\Lambda)) \times H^1(X;M(\Lambda)) 
	&\xrightarrow{ \id \times (\beta_\Gamma)^{-1} } H^1(X;M(\Gamma)) \times H^2(X;M(\Lambda)) \\ 
	& \xrightarrow{\cup} H^3(X; M(\Gamma) \boxtimes M(\Lambda))\\
	& \xrightarrow{\PD_X} H_0(X;M(\Gamma) \boxtimes M(\Lambda))\\
	& \xrightarrow{\cong} M(\Gamma) \otimes_{\Z[\pi_1(X)]}M(\Lambda)^\tr\\
	& \xrightarrow{\wt b} \Gamma\\
	& \xrightarrow{\op{const}} \F \tag{AlgMilC}\label{eqn:algmilcohom}
\end{align*}
of the six maps defined as follows.
Firstly,~$\beta_{\Gamma} \colon H^{k+1}(X;M(\Gamma)) \to H^k(X;M(\Lambda))$ is the connecting $\Lambda$-homomorphism that arises from the following exact sequence of cochain complexes:
\[ 0 \to C^\bullet(X; M(\Lambda)) \to C^\bullet(X; M(\Gamma^-)) \oplus C^\bullet(X; M(\Gamma^+)) \to C^\bullet(X;M( \Gamma)) \to 0.\]
Secondly $\cup$ is the $(\Lambda,\Lambda)$-linear twisted cup product described in Subsection~\ref{sub:Cup}. 
Thirdly~$\operatorname{PD}_X$ is the Poincar\'e duality $\Lambda$-isomorphism defined in Subsection~\ref{sub:PD}.
Fourthly, we use the $\Lambda$-isomorphism which is described in Lemma~\ref{lem:Ccoinvariants}.
Fifthly, we use the~$(\Lambda,\Lambda)$-map~$\wt b \colon M(\Gamma)\otimes_{\Z[\pi]} M(\Lambda)\to \Gamma$ obtained as an extension of the pairing $b$.
Sixthly, we use the $\F$-linear constant map, which sends a power series to its constant term.

\begin{definition} \label{def:TwistedMilnorPairingCohomology}
Let $X$ be a closed $3$-manifold, and let $M$ be a 
unitary representation such that $H_1(X;M(\Lambda))$ is $\Lambda$-torsion.
The \emph{cohomological twisted Milnor pairing} 
\[ \widetilde{\mu}_{X,M}^\cup \colon H^1(X;M(\Lambda)) \times H^1(X;M(\Lambda)) \to \F\]
is the $\F$--sesquilinear pairing defined by the composition in \eqref{eqn:algmilcohom}.
\end{definition}

\subsection{Relating homological and cohomological pairings with coefficients} \label{sub:RelatingPairingsSeries}
The goal of this subsection is to relate the homological Milnor pairing of Definition~\ref{def:TwistedMilnorPairing} to the cohomological Milnor pairing of Definition~\ref{def:TwistedMilnorPairingCohomology}.
\medbreak

Fix a closed $3$-manifold $X$, and a 
unitary representation $(M,b)$ such that 
$H_1(X;M(\Lambda))$ is $\Lambda$-torsion.
The extension of~$b$ to~$M(\Gamma)$ is denoted by $\wt b \colon M(\Gamma) \otimes_{\Z[\pi]}  M(\Gamma)^\tr \to \Gamma$.
To relate the pairings $\widetilde{\mu}_{X,M}$ and $\widetilde{\mu}_{X,M}^\cup$, we define two $\F$-linear maps~$f$ and $g$.
Firstly,~$f$ is defined as the composition
\begin{align*}
f \colon	H^1(X; M(\Gamma)) 
&\xrightarrow{ \op{ev}_{M(\Lambda)} } 
	\Hom_{\text{right-}\Lambda}\left( H_1(X; M(\Lambda))^\tr , M(\Gamma) \otimes_{\Z[\pi_1(X)]} M(\Lambda)^\tr \right)\\
	&\xrightarrow{\wt b_*} \Hom_{\text{right-}\Lambda}\left( H_1(X; M(\Lambda)) , \Gamma \right)\\
	&\xrightarrow{( \op{const})_*} \Hom_{\text{right-}\F}\left( H_1(X; M(\Lambda) ) , \F \right).
\end{align*}
Secondly, $g$ is defined as the following $\F$-linear composition:
\begin{align*}
g \colon H^1(X;M(\Gamma)) \times H^2(X;M(\Lambda)) & \xrightarrow{\cup} H^3(X; M(\Gamma) \boxtimes M(\Lambda))\\
	& \xrightarrow{\PD_X} H_0(X;M(\Gamma) \boxtimes M(\Lambda))\\
	& \xrightarrow{\cong} M(\Gamma) \otimes_{\Z[\pi_1(X)]}M(\Lambda)^\tr\\
	& \xrightarrow{\wt b} \Gamma\\
	& \xrightarrow{\op{const}} \F.
\end{align*}

We start with a lemma that involves the evaluation map of Definition~\ref{def:EvaluationMap}, as well as the description of $H_0$ from Lemma~\ref{lem:BocksteinPD}.
\begin{lemma}\label{lem:EvalAndCup}
	The following diagram commutes: 
	\[ 
		\begin{tikzcd}
			H^1(X; M(\Gamma))\times H_1(X; M(\Lambda))^\tr \ar[rr, "\op{ev_{M(\Lambda)}}"] \ar[rrd, "\cap"] \ar[dd,"\id \times \PD_{X,M(\Lambda)}"] 
			& & M(\Gamma) \otimes_{\Z[\pi_1(X)]} M(\Lambda)^\tr\\
		        & & H_0(X; M(\Gamma) \boxtimes M(\Lambda)) \ar[u, "\cong"]\\
			H^1(X; M(\Gamma)) \times H^2(X; M(\Lambda))^\tr \ar[r,"\cup"] & H^3(X; M(\Gamma) \boxtimes M(\Lambda)) \ar[r,"\PD_X^{-1}"] & H_0(X; M(\Gamma)\boxtimes M(\Lambda) ). \ar[u, "="]
		\end{tikzcd}
	\]
    In particular, we have $f \circ (\id \times \PD_{X,M(\Lambda)})=g$.
\end{lemma}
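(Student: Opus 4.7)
The plan is to split the diagram into the upper triangle and the lower pentagon, verify each piece separately, and then read off the ``in particular'' equation.

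For the upper triangle I would work at the chain level. Take a cocycle $\alpha \in C^1(X; M(\Gamma))$ and a $1$-chain $n \otimes c \in C_1(X; M(\Lambda))$ in which $c$ is a single singular $1$-simplex of $\widetilde X$. By the cap-product formula from Subsection~\ref{sub:Cup},
\[
\alpha \cap (n \otimes c) = \bigl(\alpha(\lrcorner c) \boxtimes n\bigr) \otimes \llcorner c \in C_0(X; M(\Gamma) \boxtimes M(\Lambda)).
\]
Since $c$ is $1$-dimensional, $\lrcorner c = c$ and $\llcorner c$ is a single $0$-simplex, so under the isomorphism of Lemma~\ref{lem:Ccoinvariants} the resulting homology class maps to $\alpha(c) \otimes n \in M(\Gamma) \otimes_{\Z[\pi_1(X)]} M(\Lambda)^\tr$. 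This is exactly the formula defining $\op{ev}_{M(\Lambda)}(\alpha)(n \otimes c)$ in Definition~\ref{def:EvaluationMap}, so the triangle commutes after passage to (co)homology.

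For the lower pentagon I would invoke the cup/cap associativity $(\alpha \cup \beta) \cap \sigma = \alpha \cap (\beta \cap \sigma)$ recorded at the end of Subsection~\ref{sub:Cup}. Given $\gamma \in H_1(X; M(\Lambda))$, set $\beta := \PD_{X, M(\Lambda)}(\gamma) \in H^2(X; M(\Lambda))$; Definition~\ref{def:twPoincare} gives $\beta \cap [X] = \gamma$, hence
\[
\alpha \cap \gamma = \alpha \cap (\beta \cap [X]) = (\alpha \cup \beta) \cap [X].
\]
Reading the right-hand side through the cap-with-fundamental-class map $H^3(X; M(\Gamma) \boxtimes M(\Lambda)) \to H_0(X; M(\Gamma) \boxtimes M(\Lambda))$ labelled $\PD_X$ on the bottom row of the diagram yields the commutativity of the lower portion. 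Since $f$ and $g$ share the common postcomposition by $\wt b$ followed by $\op{const}$, the identity $f \circ (\id \times \PD_{X, M(\Lambda)}) = g$ then follows immediately.

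I expect the only real subtlety to be notational bookkeeping: the symbol $\PD_X$ is used in the diagram for both $\cap [X] \colon H^3 \to H_0$ and its inverse $H_{3-k} \to H^k$ from Definition~\ref{def:twPoincare}, and the diagonal $\pi_1(X)$-action on $M(\Gamma) \boxtimes M(\Lambda)$ of Remark~\ref{rem:Box} must be kept consistent with both the cap-product formula and with Lemma~\ref{lem:Ccoinvariants}. Once these conventions are pinned down, the verification is a short formal check.
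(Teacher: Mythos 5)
Your proof is correct and matches the paper's argument essentially step for step: the upper triangle is checked on the chain level via the cap-product formula and Lemma~\ref{lem:Ccoinvariants}, and the lower pentagon follows from the identity $(\alpha \cup \beta) \cap [X] = \alpha \cap (\beta \cap [X])$. The paper's own proof is merely more terse in stating both points.
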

\begin{proof}
	The lower pentagon of the diagram is commutative thanks to the relation~$\big(\alpha \cup \beta\big) \cap [X] = \alpha \cap \big( \beta \cap [X] \big)$ that we recalled in Subsection~\ref{sub:Cup}.
	The commutativity of the upper triangle amounts to the observation that, on the chain level, the cap product $\alpha \cap \big( m \otimes c \big) =  \big( \alpha(c) \boxtimes m \big) \otimes \op{pt}$ is identified with~$\alpha(c) \otimes m$; recall Lemma~\ref{lem:Ccoinvariants}. This also happens to be $\op{ev}(\alpha)\Big(m\otimes c\Big)$.
\end{proof}

The main result of this subsection is the following.
\begin{proposition}\label{prop:IsometryTwistedPairings}
	The map $h \colon H^1(X; M(\Gamma)) \to H_1(X; M(\Lambda))$ given by~$\PD_{X, M(\Lambda)} \circ (\beta_\Gamma)^{-1}$
	induces a $\Lambda$-isometry~$h^\#$ of the pairings $\widetilde{\mu}_{X,M}$ and $\widetilde{\mu}_{X,M}^\cup$.
\end{proposition}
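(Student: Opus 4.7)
The plan is to reduce the isometry claim to two already-established compatibilities from the excerpt: Lemma~\ref{lem:EvalAndCup} (which relates evaluation and cup products via Poincaré duality) and Lemma~\ref{lem:BocksteinPD} (which commutes Poincaré duality with Bocksteins).

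First, I would unwind the definitions of $\widetilde{\mu}_{X,M}$ and $\widetilde{\mu}_{X,M}^\cup$ and notice that the auxiliary maps $f$ and $g$ introduced at the start of this subsection constitute the shared ``tail'' of the two pairing compositions. Concretely, the last three arrows of~\eqref{eqn:MilnorPairing} are, by construction, exactly $f$, giving
\[ \widetilde{\mu}_{X,M}(x,y) \;=\; f\bigl(\PD_X\circ(\beta^\Gamma)^{-1}(x)\bigr)(y), \]
while the last five arrows of~\eqref{eqn:algmilcohom} are exactly $g$, giving
\[ \widetilde{\mu}_{X,M}^\cup(\alpha,\beta) \;=\; g\bigl(\alpha,\,(\beta_\Gamma)^{-1}(\beta)\bigr). \]

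Next, Lemma~\ref{lem:EvalAndCup} furnishes the identity $g = f\circ(\id\times\PD_{X,M(\Lambda)})$, which converts the cup-product computation of $g$ into an evaluation map composed with Poincaré duality on the second argument. Substituting, the second slot of $f$ becomes the element $h(\beta)$ by the very definition of $h$ in the proposition. To handle the first slot, I would apply Lemma~\ref{lem:BocksteinPD} to the short exact sequence
\[ 0\to M(\Lambda)\to M(\Gamma^-)\oplus M(\Gamma^+) \to M(\Gamma) \to 0, \]
which establishes that Poincaré duality commutes with the Bocksteins $\beta^\Gamma$ and $\beta_\Gamma$. Because the torsion hypothesis on $H_1(X^\infty;M)$ makes these Bocksteins isomorphisms (cf.~Remark~\ref{rem:Module}), the resulting commutative square can be inverted to show that $\PD_X\circ(\beta^\Gamma)^{-1}\circ h$ is the identity at the relevant stage. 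Combining these relations yields
\[ \widetilde{\mu}_{X,M}\bigl(h(\alpha),h(\beta)\bigr) \;=\; f(\alpha)\bigl(h(\beta)\bigr) \;=\; \widetilde{\mu}_{X,M}^\cup(\alpha,\beta), \]
which is the claimed isometry.

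The main obstacle will be the bookkeeping. First, the pairing $g$ (and hence the cohomological Milnor pairing) treats its two arguments asymmetrically---only one of them is fed through a Bockstein---so one must be careful when transporting identifications across slots. Second, the sesquilinearity conventions from Remark~\ref{rem:Box} and the $t\mapsto t^{-1}$ involution carried by the extended pairing $\wt b$ must be tracked at each step, ensuring that the right-$\Lambda$ and right-$\Z[\pi]$ actions interact correctly with the transposed module structure. These verifications are not difficult, but they are notation-heavy.
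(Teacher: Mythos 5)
Your proposal is correct and takes essentially the same route as the paper's proof: both identify $f$ and $g$ as the shared tail of the two compositions, invoke Lemma~\ref{lem:EvalAndCup} to convert the cup-product term $g$ into the evaluation term $f$ precomposed with Poincaré duality, and then use Lemma~\ref{lem:BocksteinPD} to make the initial Bockstein-plus-duality prefactor of $\widetilde{\mu}_{X,M}$ cancel against $h$ in the first slot. The paper packages exactly this chain of commutativities as a single tikz diagram with $h\times h$ running along the left edge, while you unwind the same diagram into the explicit chain of equalities $\widetilde{\mu}_{X,M}(h(\alpha),h(\beta))=f(\alpha)(h(\beta))=\widetilde{\mu}_{X,M}^\cup(\alpha,\beta)$; the substance is identical.
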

\begin{proof}
	Consider the diagram
\[ \begin{tikzcd}
		H_1(X; M(\Lambda)) \times H_1(X; M(\Lambda)) \ar[d,"(\beta^\Gamma)^{-1} \times \id"]  \ar[ddr, out=0, in=90, "\widetilde{\mu}_{X,M}"]\\
		H_2(X; M(\Gamma)) \times H_1(X; M(\Lambda)) \ar[d,"\PD_{X, M(\Gamma)} \times \id"] \\
		H^1(X;M(\Gamma)) \times H_1(X; M(\Lambda))\ar[r, "f"] & \F. \\
		H^1(X; M(\Gamma)) \times H^2(X; M(\Lambda)) \ar[u, "\id \times \PD_{X, M(\Lambda)}"] \ar[ur, out=0, in=-135, "g"]\\
		H^1(X; M(\Gamma)) \times H^1(X; M(\Gamma)) \ar[u, "\id \times (\beta_\Gamma)^{-1}"] \ar[uur, out=0, in=-90, "\widetilde{\mu}_{X,M}^\cup"] \ar[uuuu, out=-180, in=180, "h\times h"]
\end{tikzcd} \]
The triangle involving $f$ and $g$ commutes by Lemma~\ref{lem:EvalAndCup}.
 The other two triangles on the right commute by definition of the pairings~$\widetilde{\mu}_{X,M}$ and $\widetilde{\mu}_{X,M}^\cup$.
 The left-hand side commutes by Lemma~\ref{lem:BocksteinPD}.

The map $h$ is a composition of isomorphisms. Thus, it is an isomorphism itself.
The commutativity of the diagram shows that $h$ respects the two pairings and is therefore an isometry. 
This concludes the proof of the proposition.
\end{proof}

\section{The Milnor pairing via infinite cyclic covers.}
\label{sec:GeometricMilnor}

\subsection{Ends and twisted (co)homology}\label{sub:Ends}
In this subsection, we study twisted homology groups of the infinite cyclic cover $X^\infty$ of a $3$--manifold $X$. 
More precisely, fix a field~$\F$, an $(\F, \Z[\pi_1(X)])$--bimodule~$M$, and suppose that $M$ has the property that $H_\bullet(X^\infty; M)$ is a finite dimensional $\F$--vector space.
By analogy with the homology with $\Gamma^+$, $\Gamma^-$ and $\Gamma$--coefficients, we consider locally finite twisted homology and introduce $\Lambda$--modules $H_\bullet^+(X^\infty;M)$, $H_\bullet^-(X^\infty;M)$ and~$H_\bullet^\lf(X^\infty;M)$.
The main result of this technical subsection is the fact that the modules~$H_\bullet^{+}(X^\infty;M)$ and~$H_\bullet^{-}(X^\infty;M)$ vanish; see Proposition~\ref{prop:PMAcyclic}.
This involves adapting arguments due to Milnor~\cite[Section 4]{MilnorInfiniteCyclic} to the twisted setting.

\medbreak
Fix a compact $3$--manifold~$X$ together with a surjection~$\pi_1(X) \twoheadrightarrow \Z$.
Pick a smooth map $f\colon X \rightarrow S^1$ inducing this homomorphism. 
The cover $X^\infty$ that corresponds to the kernel also has the description
as the pull-back of the diagram 
\begin{center}
	\begin{tikzpicture}
		\matrix (m) [matrix of math nodes, row sep=3em, column sep=4em
					, text height=1.5ex, text depth=0.25ex]
  		{
		     X^\infty & \R \\
		     X & S^1. \\
		};
		\path[->] (m-1-1) edge node[above] {$F$} (m-1-2);
		\path[->] (m-1-1) edge (m-2-1);
		\path[->] (m-1-2) edge node[right] {$\exp$} (m-2-2);
		\path[->] (m-2-1) edge (m-2-2);
	\begin{scope}[shift={($(m-1-1)+(0.5,-0.5)$)}]
                \draw +(-0.2,0) -- +(0,0) -- (0,0.2);
        \end{scope}
	\end{tikzpicture}
\end{center}

We describe the various homology groups that can be associated to $X^\infty$.
First, we can use the inclusion $\pi_1(X^\infty) \subset \pi_1(X)$ to restrict an~$(\F,\Z[\pi_1(X)])$-bimodule~$M$ to an $(\F, \Z[\pi_1(X^\infty)])$--bimodule, which we also denote by~$M$.

Write $t_g \colon X^\infty \to X^\infty$ for the deck transformation of $g \in \pi_1(X)$. 
Note that~$t_g$ only depends on the class of $g$ in~$\pi_1(X)/\pi_1(X^\infty) = \Z$. 
We use this action to endow the~$\F$-chain complex $C_\bullet(X^\infty;M)$ with the structure of an $\Lambda$-chain complex.
\begin{construction}\label{const:LambdaModule}
	Given an $(\F,\Z[\pi_1(X)])$-module $M$, we endow the $\F$-chain complex $C_\bullet(X^\infty; M)$ with a left $\Lambda$-module structure. 
For $m \otimes_{\Z[\pi_1(X^\infty)]} c \in C_\bullet(X^\infty; M)$, and~$g \in \pi_1(X)$, we define $t_g \colon C_\bullet(X^\infty;M) \to C_\bullet(X^\infty;M)$ by
	\[ g \cdot \big( m \otimes_{\Z[\pi_1(X^\infty)]} c \big) \mapsto m\cdot g^{-1} \otimes_{\Z[\pi_1(X^\infty)]} (g \cdot c).\] 
In order to see that this action is well defined, we use the fact that $\pi_1(X^\infty) \subset \pi_1(X)$ is a normal subgroup as well as the following computation:
	\begin{align*}
		g \cdot (mh^{-1} \otimes_{\Z[\pi_1(X^\infty)]} hc) 
		&=  m\cdot h^{-1}g^{-1} \otimes_{\Z[\pi_1(X^\infty)]} g h\cdot c \\
		&= m\cdot h^{-1}g^{-1} \otimes_{\Z[\pi_1(X^\infty)]} (ghg^{-1})g \cdot c) \\
		&= m\cdot h^{-1}g^{-1}(ghg^{-1}) \otimes_{\Z[\pi_1(X^\infty)]} (g \cdot c) \\
		&=m\cdot {g}^{-1} \otimes_{\Z[\pi_1(X^\infty)]} (g \cdot c)\\
		&=g \cdot (m \otimes_{\Z[\pi_1(X^\infty)]} c).
	\end{align*}
This left action descends to the quotient $\pi_1(X)/\pi_1(X^\infty)$ and so gives rise to a $\Z$--action.
Via this action, endow $C_\bullet(X^\infty; M)$ with a left $\Lambda$--module structure.
\end{construction}
\begin{remark}
Note that the $\F$-chain complex $C_\bullet(X^\infty; M)$ depends only on the structure of $M$ as a $(\F, \Z[\pi_1(X^\infty)])$--bimodule, whereas the $\Lambda$--action on $C_\bullet(X^\infty; M)$ requires that $M$ is in fact an $(\F, \Z[\pi_1(X)])$--bimodule.
\end{remark}

In Lemma~\ref{lem:weakshapiro} below, we will use Shapiro's lemma to relate the chain complex~$C_\bullet(X^\infty;M)$ described in Construction~\ref{const:LambdaModule} to the chain complex $C_\bullet(X;M(\Lambda))$ described in Construction~\ref{cons:LambdaCoeff}.

\begin{construction}\label{const:LambdaCohomology}
	We also equip the $\F$-cochain complex~$C^\bullet(X^\infty;M)$ with a $\Lambda$--action.
	For $\phi \in C^\bullet(X^\infty; M)$
	and $g \in \pi_1(X)$, define $(g \cdot \phi)$ by
	\[ (g \cdot \phi)(x) := \phi(g^{-1}x)g^{-1}. \]
	This action descends to $\pi_1(X)/\pi_1(X^\infty)$ and so gives a $\Z$--action, since $\phi$ is $\Z[\pi_1(X^\infty)]$--linear.
\end{construction}

The cover~$X^\infty$ is non-compact and so we now recall some of the natural chain complexes that appear in the study of such spaces. 
To the best of our knowledge such details have not been written down in the twisted case.
\medbreak
We will define chain complexes $C_\bullet^\lf(X^\infty;M)$, $C_\bullet^+(X^\infty;M) $, $C_\bullet^-(X^\infty;M)$ as limits of inverse systems: consider the category~$\N$, which has the natural numbers as objects and a unique map $a \rightarrow b$ if $a\leq b$.
 Thus, the category $\N^\text{op}$ looks like 
\[ \ldots \to 3 \to 2 \to 1 \to 0.\]
A \emph{diagram~$\{D(a)\}_a$ of type~$\N^\text{op}$}
in chain complexes is an inverse system of chain complexes, and we can consider its limit, a chain complex denoted by $\ilim \{ D(a) \}_a$, which is equipped with maps from the limit into the diagram.

Before we continue, we give a concrete description of the limits of these inverse systems in the category of modules to get the reader acquainted. 
As input, we have a collection~$f_{ij} \colon M_i \to M_j$ of module homomorphisms for $i \geq j \geq 0$ with $f_{ii}=\id$ and $f_{ik}=f_{jk} \circ f_{ij}$ for $i\geq j\geq k$, and the limit can be defined as:
\[ {\ilim}_i M_i = \{ (v_i) \in \prod_i M \colon f_{ij}(v_i) = v_j \} .\]

Note that $X^\infty$ comes with a sense of
negative and positive direction via the map~$F \colon X^\infty \to \R$. 
For each $t \in \R$, we then set
\begin{align*}
&L_t := F^{-1}(-\infty, -t), \\
&R_t := F^{-1}(t, \infty),
\end{align*}
and obtain a diagram of type~$\N^\text{op}$ by considering, for each $a \leq b$ in $\N$, the
inclusion induced map~$C_\bullet(X^\infty, L_b;M) \to C_\bullet(X^\infty, L_a; M)$. 
This map is well defined because the underlying map $X^\infty \to X^\infty$ is the identity, which is compatible with the twisted coefficients.
Similarly, define diagrams for the families~$R_t$ and $L_t \cup R_t$.
\begin{definition}\label{def:EndHomology}
Let~$M$ be an $(\F, \Z[\pi_1(X)])$--bimodule.
The chain complexes $C^\lf_\bullet(X^\infty;M)$, $C^+_\bullet(X^\infty;M)$, $C^-_\bullet(X^\infty;M)$ of $\F$--modules 
are defined as the following limits of the inverse systems constructed above:
\begin{align*}
C^-_\bullet(X^\infty;M) &= \ilim C_\bullet(X^\infty, L_t; M), \\
C^+_\bullet(X^\infty;M) &= \ilim C_\bullet(X^\infty, R_t; M), \\
C^\lf_\bullet (X^\infty;M) &= \ilim C_\bullet(X^\infty, L_t \cup R_t; M).
\end{align*}
\end{definition}

Informally, these chain complexes respectively allow for chains that tend towards one or two of the ends of $X^\infty$. 
We describe an equivalent definition of $C^\lf_\bullet (X^\infty;M) $.

\begin{remark}
\label{rem:FundamentalClassInfiniteCyclic}
	Note that $C^\lf_\bullet(X^\infty;M)$ is also the inverse limit of the complements of compact sets $K \subset X^\infty$, that is 
	\[ C^\lf_k(X^\infty; M) = \underset{K \subset X^\infty}{\ilim}  C_k(X^\infty, X^\infty \sm K; M). \]
	The corresponding homology groups are the locally finite homology groups, that is
\[ C_k^\lf(X^\infty; M) = \Big\{ \alpha = \sum_{i \in I} m\otimes_{\Z[\pi_1(X^\infty)]} c_i \in C_k(\widetilde{X};M) \colon \alpha \text{ locally finite}\Big\},\]   
where $\alpha$ is \emph{locally finite} if for every compact set $K$, $\{ i \in I \colon \pi^{\infty}(c_i) \cap K \neq \emptyset\}$ is finite,
and $\pi^\infty \colon \wt X \to X^\infty$ is the covering projection.
\end{remark}

We now equip these chain complexes with a $\Lambda$-module structure.

\begin{construction}\label{const:ICLambdaModule}
Recall from Construction~\ref{const:LambdaModule} that the chain complex~$C_\bullet(X^\infty; M)$ is a chain complex of~$\Lambda$--modules: an element~$g \in \pi_1(X)$ acts by
\[ g\cdot (m \otimes_{\Z[\pi_1(X^\infty)]} c) = m g^{-1} \otimes_{\Z[\pi_1(X^\infty)]} gc. \]
Now we consider the relative case of a pair $Y \subset X^\infty$. The chain map above induces a well defined chain map
\begin{align} \label{eq:Translation}
	t_g \colon C_k(X^\infty, Y; M) &\to C_k(X^\infty, t_g(Y); M)\\
	m \otimes c &\mapsto m g^{-1} \otimes gc. \nonumber
\end{align}	
Note that for elements $g \in \pi_1(X^\infty)$, the map $t_g$ is the identity.
By construction of the inverse limit, we have projections~$\pi_a$ as below, and can consider the composition
\[ C^-_k(X^\infty;M)={\ilim}_t C_k(X^\infty, L_t; M) \xrightarrow{\pi_a} C_k(X^\infty, L_a; M) \xrightarrow{t_g} C_k(X^\infty, t_g(L_a); M) \] 
for $a \in \N$. 
By the universal property of the inverse limit, we get an induced map~$t_g \colon C^-_k(X^\infty; M) \to  C^-_k(X^\infty; M)$. 
These maps~$t_g$ and the corresponding ones for the other complexes of Definition~\ref{def:EndHomology} endow the chain modules $C^\pm_k (X^\infty; M)$ and $C^\lf_k(X^\infty; M)$ with the structure of $\Lambda$--modules.
\end{construction}

Next, we discuss the analogous definitions in cohomology.
To do so, note that we obtain a diagram of type~$\N$ (and therefore a direct system) by 
considering the 
inclusion induced map~$C^\bullet(X^\infty, L_a; M) \to C^\bullet(X^\infty, L_b; M)$ for each $a \leq b$.

As for the limit, we quickly recall the colimit of directed systems in the category of modules. 
 Here, we have as input module homomorphisms~$f_{ij} \colon M_i \to M_j$ for $0\leq i \leq j$ with $f_{ii}=\id$ and $f_{ik}=f_{jk} \circ f_{ij}$ for $i \leq j \leq k$, and the colimit is
\[ {\colim}_i M_i = \bigoplus_i M_i/ \sim \text{, where } f_{ik}(v_i) \sim f_{jk}(v_j) \text{ for } v_i \in M_i, v_j \in M_j, k \geq i, k \geq j. \]
\begin{definition} \label{def:EndCoHomology}
Let~$M$ be a $(\F, \Z[\pi_1(X)])$--bimodule.
The cochain complexes $C_\cps^\bullet(X^\infty;M)$, $C_+^\bullet(X^\infty;M)$, $C_-^\bullet(X^\infty;M)$ of $\F$--vector spaces are defined as the following colimits:
\begin{align*}
C_-^\bullet(X^\infty;M) &= \clim C^\bullet(X^\infty, L_t; M), \\
C_+^\bullet(X^\infty;M) &= \clim C^\bullet(X^\infty, R_t; M), \\
C_\cps^\bullet(X^\infty;M) &= \clim C^\bullet(X^\infty, L_t \cup R_t; M).
\end{align*}
\end{definition}

We briefly discuss equivalent definitions of these cochain complexes.

\begin{remark}
\label{rem:CompactlySupportedCohomology}
	Similarly to Remark~\ref{rem:FundamentalClassInfiniteCyclic}, the cochain complex~$C_\cps^\bullet(X^\infty; M)$ can be viewed as the cochain complex of compactly supported cochains.
	Namely, taking the colimit over compact subsets of $X^\infty$, we have
    \[ C_\cps^\bullet(X^\infty; M)=\underset{K \subset X^\infty} \clim C^\bullet(X^\infty,X^\infty \setminus K; M).\]
Note furthermore that since taking (directed) colimits in $R$--modules is exact~\cite[Theorem~2.6.15]{Weibel94}, Definition~\ref{def:EndCoHomology} could have been made in cohomology instead of on the cochain complex level.
\end{remark}

Next, we endow these chain complexes with a $\Lambda$-module structure. 

\begin{construction}
\label{Cons:ModuleStructureCohomologyLimit}
As for homology (recall Construction~\ref{const:ICLambdaModule}), 
 the $\Lambda$--module structure on~$C^\bullet(X^\infty;M)$ from Construction~\ref{const:LambdaCohomology}, given by $(g \cdot \phi)(x) = \phi(g^{-1} \cdot x) g^{-1}$, passes to the direct limits and induces $\Lambda$-module structures on the chain complexes of Definition~\ref{def:EndCoHomology}.
\end{construction}

Applying  functoriality and the universal property of the (co)limit to the canonical inclusions allows us to define $\Lambda$-linear maps which fit into the following sequences:
\begin{align}
	0 \rightarrow C_\bullet(X^\infty; M) &\rightarrow C^-_\bullet(X^\infty;M) \oplus C^+_\bullet(X^\infty;M) 
	\rightarrow C^\lf_\bullet(X^\infty;M) \rightarrow 0,\label{eq:ShortHomology}\\
	0 \rightarrow C_\cps^\bullet(X^\infty; M) &\rightarrow C_-^\bullet(X^\infty;M) \oplus C_+^\bullet (X^\infty;M) 
	\rightarrow C^\bullet(X^\infty;M) \rightarrow 0.\label{eq:ShortCohomology}
\end{align}

\begin{remark}\label{rem:ConcreteLimOne}
Below, we take inverse limits of a short exact sequence of chain complexes. The sequence of limits will in general not be exact, and the defect is measured by the associated derived functor~$\limone$~\cite[Lemma 3.5.2]{Weibel94}. Although we have to refer the reader to the book of Weibel~\cite[Section~3.5]{Weibel94} for more details and more abstract properties, we now give a concrete construction of~$\limone_i M_i$ for a tower of modules~$f_{ij} \colon M_i \to M_j$ for $i\geq j \geq 0$. Consider the map
        \begin{align*} 
                \Delta \colon \prod_i M_i &\to \prod_i M_i\\
                \Delta(v_0, v_1, \ldots) &= (v_0 - f_{1,0} (v_1), \ldots, 
                v_i - f_{i+1,i}(v_{i+1}), \ldots).
        \end{align*}
Note that $\ker \Delta$ agrees on the nose with the previous description of $\ilim M_i$. One can define $\limone M_i = \op{coker} \Delta$.
\end{remark}

The next lemma proves that these sequences are exact.
\begin{lemma}\label{lem:PMShortExact}
	The sequences \eqref{eq:ShortHomology} and \eqref{eq:ShortCohomology} of $\Lambda$-modules are exact.
\end{lemma}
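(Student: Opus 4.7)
The plan is to establish, for each fixed $t \in \N$, a Mayer-Vietoris style short exact sequence of chain complexes
\[ 0 \to C_\bullet(X^\infty; M) \xrightarrow{\Delta} C_\bullet(X^\infty, L_t; M) \oplus C_\bullet(X^\infty, R_t; M) \xrightarrow{\nabla} C_\bullet(X^\infty, L_t \cup R_t; M) \to 0, \]
with $\Delta(c) = (\ol{c}, \ol{c})$ the diagonal quotient and $\nabla(\ol{a}, \ol{b}) = \ol{a-b}$, and then pass to the appropriate (co)limit over $t$. Injectivity of $\Delta$ follows from the disjointness $L_t \cap R_t = \emptyset$; the composition $\nabla \circ \Delta$ vanishes tautologically; exactness in the middle uses the splitting $C_\bullet(L_t \cup R_t; M) = C_\bullet(L_t; M) \oplus C_\bullet(R_t; M)$ to lift a pair $(\ol{a}, \ol{b})$ with $a-b \in C_\bullet(L_t \cup R_t; M)$ to a common chain in $C_\bullet(X^\infty; M)$; and surjectivity of $\nabla$ is witnessed by the lift $\ol{c} \mapsto (\ol{c}, 0)$.

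For the homology sequence~\eqref{eq:ShortHomology}, I would apply $\ilim_t$ to the sequence above. Left-exactness of $\ilim$, together with the identification of $\ilim$ of a finite direct sum with the direct sum of $\ilim$s, yields exactness on the left and in the middle of the resulting sequence $0 \to C_\bullet(X^\infty;M) \to C^-_\bullet(X^\infty;M) \oplus C^+_\bullet(X^\infty;M) \to C^\lf_\bullet(X^\infty;M)$. Surjectivity on the right follows from the $\limone$ six-term exact sequence: the inverse system $t \mapsto C_\bullet(X^\infty; M)$ is constant with identity transition maps, so the map $\Delta$ of Remark~\ref{rem:ConcreteLimOne} is surjective (solve the recursion $v_0 := 0$, $v_{i+1} := v_i - w_i$), giving $\limone C_\bullet(X^\infty; M) = 0$.

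For the cohomology sequence~\eqref{eq:ShortCohomology}, I would first apply $\Hom_{\Z[\pi_1(X)]}(-, M)$ to the Mayer-Vietoris SES at each $t$. Because the relative cellular chain complexes $C_\bullet(X^\infty, L_t)$, $C_\bullet(X^\infty, R_t)$ and $C_\bullet(X^\infty, L_t \cup R_t)$ are free $\Z[\pi_1(X)]$-modules in each degree, the SES is degreewise split and dualization preserves exactness. Taking $\clim_t$, which is exact on $R$-modules as recalled in Remark~\ref{rem:CompactlySupportedCohomology}, then produces the cohomology SES. In both cases, all maps are natural with respect to the translations of Constructions~\ref{const:ICLambdaModule} and~\ref{Cons:ModuleStructureCohomologyLimit}, so the resulting (co)limits inherit compatible $\Lambda$-actions and the sequences are $\Lambda$-linear. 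The main obstacle is right-exactness in the homology case, which reduces precisely to the vanishing of $\limone$ for a constant inverse system.
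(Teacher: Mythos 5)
Your proposal is correct and follows the same strategy as the paper: a relative Mayer--Vietoris short exact sequence at each $t$ (exact because $L_t \cap R_t = \emptyset$), the vanishing of $\limone$ for the constant tower $C_\bullet(X^\infty;M)$ for the homological inverse limit, and exactness of filtered colimits for the cohomological direct limit. You supply a bit more detail than the paper does (the explicit verification of the Mayer--Vietoris SES, the recursion witnessing $\limone = 0$, and the degreewise splitting justifying dualization), and the only small slip is that the dualization should be over $\Z[\pi_1(X^\infty)]$ rather than $\Z[\pi_1(X)]$, since the twisted cochains of $X^\infty$ are formed using the $\pi_1(X^\infty)$-action.
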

\begin{proof}
For every $t > 0$, we have the relative Mayer-Vietoris sequence 
\[ 0 \to C_\bullet(X^\infty; M) \to C_\bullet(X^\infty, L_t;M) \oplus C_\bullet(X^\infty, R_t;M) 
	\to C_\bullet(X^\infty, L_t\cup R_t;M) \to 0.\]
	Note that we used the fact $L_t \cap R_t = \emptyset$. 
We now pass to the limit.
The inverse limit is not an exact functor, and so we consider the associated derived functor~$\limone$~\cite[Lemma 3.5.2]{Weibel94}.
In our case, for each $k$, we thus obtain the following exact sequence of $\Lambda$-modules:
\[ 0 \rightarrow C_k(X^\infty; M) \rightarrow C_k^-(X^\infty;M) \oplus C_k^+(X^\infty;M) 
\rightarrow C_k^\lf(X^\infty;M) \rightarrow \limone C_{k+1}(X^\infty; M). \]
Since $C_{k+1}(X^\infty; M)$ is the constant inverse system, the module $\limone C_{k+1}(X^\infty; M)$ vanishes~\cite[Lemma~3.5.3]{Weibel94}.
This establishes the lemma for the first sequence.
Now we consider the second sequence. It is the colimit of the sequence of cochain complexes
\[ 0 \to C^\bullet(X^\infty, L_t \cup R_t; M) \to C^\bullet(X^\infty, L_t;M) \oplus C^\bullet(X^\infty, R_t;M) \to C^\bullet(X^\infty;M) \to 0.\]
Recall that taking (directed) colimits in $R$--modules is exact~\cite[Theorem~2.6.15]{Weibel94}, and thus the sequence in \eqref{eq:ShortCohomology} is exact as well.
\end{proof}

Our goal is now to show that the chain complexes $C_\bullet^\pm(X^\infty;M)$ and $C^\bullet_\pm(X^\infty;M)$ are acyclic.
As a first step however, we establish the following technical lemma.

\begin{lemma}[Milnor]\label{lem:FiniteGenerated}
If the $\F$--vector space $H_i(X^\infty;M)$ is
finite dimensional for every $i\in \N$, then the following assertions hold: 
\begin{enumerate}
	\item the $\F$-vector spaces $H_i(L_t \subset X^\infty;M)$ and $H_i(X^\infty, L_t;M)$ are finite dimensional;
	\item the $\F$-vector spaces~$H^i(X^\infty; M)$,  $H^i(L_t \subset X^\infty;M)$ and $H^i(X^\infty, L_t;M)$ are finite dimensional.
\end{enumerate}
\end{lemma}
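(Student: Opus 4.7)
The plan is to reduce each finiteness claim to the finiteness hypothesis on $H_i(X^\infty;M)$ using Mayer--Vietoris, the long exact sequence of a pair, and the universal-coefficient-type isomorphism recorded in Lemma~\ref{lem:nonSingularEval}.

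First I will cut $X^\infty$ transversely at level $-t$. Since $F\colon X^\infty \to \R$ is proper (being pulled back from the covering $\R \to S^1$ with $X$ compact), for any small $\epsilon > 0$ the slab
\[ A \cap B := F^{-1}\bigl([-t-\epsilon,\, -t+\epsilon]\bigr) \]
is compact, where $A := F^{-1}(-\infty,\, -t+\epsilon]$ and $B := F^{-1}[-t-\epsilon,\, \infty)$. After a small adjustment of $t$ and a smooth approximation of $F$, I may assume $-t+\epsilon$ and $-t-\epsilon$ are regular values, so that $A \cap B$ is a compact codimension-zero submanifold of $X^\infty$; its twisted homology $H_\bullet(A\cap B;M)$ is then finite-dimensional in every degree. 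The inclusion $L_t \hookrightarrow A$ is a homotopy equivalence compatible with the twisted coefficients, so $H_i(L_t \subset X^\infty;M) \cong H_i(A;M)$.

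Next, feeding the above into the twisted Mayer--Vietoris sequence for the open cover $\{\Int A,\, \Int B\}$ of $X^\infty$ yields
\[ \cdots \to H_i(A\cap B;M) \to H_i(A;M) \oplus H_i(B;M) \to H_i(X^\infty;M) \to H_{i-1}(A\cap B;M) \to \cdots \]
Both neighbours of the middle term are finite-dimensional over $\F$, by the previous paragraph and by hypothesis, so exactness forces $H_i(A;M) \oplus H_i(B;M)$ to be finite-dimensional; in particular so is $H_i(L_t \subset X^\infty;M)$. A single application of the long exact sequence of the pair $(X^\infty, L_t)$ then gives finite-dimensionality of $H_i(X^\infty, L_t;M)$, which completes~(1).

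For~(2), the restriction of the unitary representation $(M,b)$ along $\pi_1(X^\infty) \hookrightarrow \pi_1(X)$ is again unitary, so Lemma~\ref{lem:nonSingularEval} applies to the cellular chain complexes of $X^\infty$, of $p^{-1}(L_t)$, and of the corresponding pair; it identifies each twisted cohomology group in the statement with the $\F$-linear dual of the corresponding twisted homology group, and finite-dimensionality of~(2) follows from~(1). The main technical obstacle I expect is the transversality step: arranging $A \cap B$ to be a genuine compact codimension-zero submanifold so that its twisted homology is manifestly finite-dimensional. I will handle this by the small perturbation of $F$ and $t$ mentioned above, which changes neither the homotopy type nor the twisted (co)homology of $L_t$ or of the pair $(X^\infty, L_t)$.
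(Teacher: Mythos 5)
Your proposal follows essentially the same route as the paper: item~(1) is proved by a Mayer--Vietoris decomposition of $X^\infty$ into two halves with compact intersection, which is precisely Milnor's original argument (the paper simply cites it), and item~(2) is obtained by dualizing via Lemma~\ref{lem:nonSingularEval}. The only substantive difference is cosmetic. The paper applies Lemma~\ref{lem:nonSingularEval} just to identify $H^i(X^\infty;M)$ with the dual of $H_i(X^\infty;M)$, and then re-runs the Mayer--Vietoris and relative-pair sequences in cohomology to treat $H^i(L_t\subset X^\infty;M)$ and $H^i(X^\infty,L_t;M)$; you instead dualize all three cohomology groups at once by noting that Lemma~\ref{lem:nonSingularEval} applies equally to the chain complexes of $p^{-1}(L_t)$ and of the pair. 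Your version is marginally more economical and perfectly valid.

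One imprecision to fix: the claim that a small perturbation of $F$ and $t$ ``changes neither the homotopy type nor the twisted (co)homology of $L_t$ or of the pair $(X^\infty,L_t)$'' is too strong, since moving $-t$ past a critical value of $F$ can change the homotopy type of the sublevel set. In fact no perturbation is needed to make $-t\pm\epsilon$ regular: for a fixed $t$, Sard's theorem already supplies arbitrarily small $\epsilon$ with this property. What your argument really needs is that $L_t\hookrightarrow A$ be a homotopy equivalence, which requires $[-t,-t+\epsilon]$ to contain no critical value and hence fails exactly when $-t$ itself is critical. The clean remedy --- and the one Milnor actually uses --- is to work with a cellular map $F$ and integer levels, so that the $L_n$ for $n\in\N$ are honest subcomplexes with finite complement. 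Since the (co)limits in Definitions~\ref{def:EndHomology} and~\ref{def:EndCoHomology} are taken over $\N$, restricting to such $t$ is all that is used downstream, and the transversality issue disappears.
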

\begin{proof}

The first assertion follows as in~\cite[p.125]{MilnorInfiniteCyclic}.
We now prove the second assertion.
Lemma~\ref{lem:nonSingularEval} shows that 
$H^k(X^\infty; M) \cong \op{Hom}_{\text{right-}\F} (H_k(X^\infty; M)^\tr, \F)$.
Since $H_k(X^\infty; M)$ is a finite dimensional $\F$--vector space,
so is $H^k(X^\infty; M)$. 
As in Milnor's argument, a Mayer-Vietoris sequence for the subspaces~$L_t$ and~$R_{-t-1}$, whose union is~$X^\infty$, and a relative sequence for the pair~$L_t \subset X^\infty$ show that the cohomology groups $H^i(L_t \subset X^\infty;M)$ and $H^i(X^\infty, L_t;M)$ are finite dimensional. 
\end{proof}

We can now prove that the chain complexes $C^-_\bullet(X^\infty; M)$ and $C^+_\bullet(X^\infty; M)$ of~$\F$-vector spaces are acyclic. 
An inverse system $\lbrace \phi_{ij} \colon A_j \to A_i \rbrace_{i \leq j}$ is \emph{Mittag-Leffler} if for all $k$, the image~$\phi_{ik}(A_k) \subset A_i$ stabilises~\cite[Definition 3.5.6]{Weibel94}.
If the image is eventually trivial, then the system is called \emph{trivially Mittag-Leffler}. 
Note that if an inverse system~$\lbrace \phi_{ij} \colon A_j \to A_i \rbrace_{i \leq j}$ is trivially Mittag-Leffler, then $\ilim A_i=0$.

\begin{remark}
\label{rem:IsMittagLeffler}
Observe that the inverse systems $C_\bullet(X^\infty, L_t; M)$, $C_\bullet(X^\infty, R_t; M)$ and $C_\bullet(X^\infty, L_t \cup R_t; M)$ of $\Lambda$-chain complexes are Mittag-Leffler.
Indeed, for each~$j \leq i$, the inclusion induced map~$\phi_{ij} \colon C_\bullet(X^\infty,L_i;M) \to C_\bullet(X^\infty,L_j;M)$ is a surjection.
The argument is identical for $C_\bullet(X^\infty, L_t \cup R_t; M)$.
\end{remark}

We can now prove the first main result of this subsection.

\begin{proposition}\label{prop:PMAcyclic}
The chain complexes~$C^-_\bullet(X^\infty;M)$ and $C^+_\bullet(X^\infty;M)$ are acyclic. 
\end{proposition}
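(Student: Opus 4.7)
My plan is to prove that $C^-_\bullet(X^\infty;M)$ is acyclic; the argument for $C^+_\bullet(X^\infty;M)$ is entirely symmetric (replacing $L_t$ by $R_t$ throughout). The strategy has three main ingredients: a Milnor-type short exact sequence to pass from the inverse limit of chain complexes to homology, a reduction via the deck transformation, and an analysis of the resulting finite-dimensional linear algebra problem. The main obstacle is the final step, where one must verify that a certain stable image vanishes.

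\textbf{Step 1 (Milnor exact sequence).} By Remark~\ref{rem:IsMittagLeffler} the inverse system of chain complexes $\{C_\bullet(X^\infty,L_t;M)\}_t$ is Mittag-Leffler at the chain level. The standard Milnor short exact sequence for inverse limits of chain complexes~\cite[Theorem 3.5.8]{Weibel94} then yields, for each $i$,
\[
0 \to {\limone}_t H_{i+1}(X^\infty,L_t;M) \to H_i\bigl(C^-_\bullet(X^\infty;M)\bigr) \to {\invlim}_t H_i(X^\infty,L_t;M) \to 0,
\]
so it suffices to show that both outer terms vanish.

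\textbf{Step 2 (Deck transformation reduction).} The deck transformation $\tau$ restricts to a homeomorphism of pairs $(X^\infty,L_{t+1}) \to (X^\infty,L_t)$ that commutes with the inclusion maps appearing in the inverse system. Iterating, we obtain isomorphisms $\tau^t_* \colon H_i(X^\infty,L_t;M) \xrightarrow{\cong} H_i(X^\infty,L_0;M) =: H$. By Lemma~\ref{lem:FiniteGenerated}, $H$ is finite dimensional over $\F$. Under these identifications the inverse system becomes a constant tower
\[
\cdots \to H \xrightarrow{A} H \xrightarrow{A} H,
\]
where $A \colon H \to H$ is a single $\F$-linear endomorphism (explicitly, $A=\iota\circ\tau_*^{-1}$ for $\iota$ the inclusion-induced map $H_i(X^\infty,L_1;M)\to H_i(X^\infty,L_0;M)$). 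Because $H$ is finite dimensional, the descending chain $H\supset A(H) \supset A^2(H)\supset\cdots$ necessarily stabilises. This forces the Mittag-Leffler property on the tower $\{H_i(X^\infty,L_t;M)\}_t$, so ${\limone}_t H_{i+1}(X^\infty,L_t;M)=0$ automatically, and ${\invlim}_t H_i(X^\infty,L_t;M)$ is identified with the stable image $\bigcap_n A^n(H)$.

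\textbf{Step 3 (vanishing of the stable image; the main obstacle).} It remains to show that $\bigcap_n A^n(H)=0$. My plan is to adapt Milnor's argument~\cite[\S4]{MilnorInfiniteCyclic} to the twisted setting. A class in the stable image corresponds to a compatible family $[\alpha_n]\in H_i(X^\infty,L_n;M)$, i.e.\ to a class representable by relative cycles that can be pushed arbitrarily far into the negative end of $X^\infty$. Combining the long exact sequence of the pair $(X^\infty,L_t)$ with the finite-dimensionality statements of Lemma~\ref{lem:FiniteGenerated} and the fact that $H_\bullet(X^\infty;M)$ is finite dimensional (hence $\Lambda$-torsion under the deck transformation action), one shows that such a class lifts to a class in $H_i(X^\infty;M)$ that is forced by a torsion relation to eventually bound once pushed sufficiently far. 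The technical difficulty here, as opposed to Milnor's untwisted setting, is to verify that the deck transformation interacts correctly with connecting homomorphisms and with the $\Z[\pi_1(X^\infty)]$-action on twisted cellular chains; this should follow from the naturality of the long exact sequence together with the compatibility spelled out in Construction~\ref{const:LambdaModule}.
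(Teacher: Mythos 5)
Steps 1 and 2 are correct and faithfully mirror the paper's set-up: the Milnor exact sequence reduces the problem to showing that $\invlim_t H_i(X^\infty,L_t;M)$ and $\limone_t H_{i+1}(X^\infty,L_t;M)$ both vanish, and your deck-transformation reduction correctly identifies the tower with powers of a single endomorphism $A$ of the finite-dimensional space $H = H_i(X^\infty, L_0; M)$, which already disposes of the $\limone$ term.

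Step 3, however, contains a genuine gap, and the plan you sketch to fill it is off. The proposed lift of a class in the stable image to $H_i(X^\infty;M)$ is not automatic: the connecting map $\partial \colon H_i(X^\infty, L_0; M) \to H_{i-1}(L_0; M)$ in the long exact sequence of the pair has no reason to vanish on $\bigcap_n A^n(H)$, so you cannot pass to absolute homology in this way. The torsion hypothesis on $H_\bullet(X^\infty;M)$ is also a red herring; it plays no role in the actual argument. What you are missing is the elementary \emph{compactness} observation that the paper isolates as its Claim~\ref{claim:InclusionInducedIsZero}: choose finitely many relative cycles generating $H_i(X^\infty, L_0; M)$ (possible by Lemma~\ref{lem:FiniteGenerated}); each has compact support, hence is entirely contained in $L_{-k}$ for some $k>0$; therefore the inclusion-induced map $H_i(X^\infty, L_0; M) \to H_i(X^\infty, L_{-k}; M)$ is zero. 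Conjugating by the deck transformation $\tau^k$ (which sends $(X^\infty,L_k)$ to $(X^\infty,L_0)$ and $(X^\infty,L_0)$ to $(X^\infty,L_{-k})$) then shows the inclusion-induced map $H_i(X^\infty, L_k; M) \to H_i(X^\infty, L_0; M)$ is zero as well, i.e.\ $A^k=0$. So $A$ is in fact nilpotent; $\bigcap_n A^n(H) = 0$ follows immediately, and the tower is trivially Mittag-Leffler, which also gives $\limone = 0$ without appealing to image-stabilisation separately. The naturality and equivariance issues you flag in Step 3 are genuine but are precisely what Construction~\ref{const:ICLambdaModule} handles; once the compactness argument is in place there is nothing further to verify about the interaction between the deck transformation and the coefficient system.
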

\begin{proof}
We supplement the proof for $C^-_\bullet(X^\infty;M)$; the argument for~$C^+_\bullet(X^\infty;M)$ is identical.
	The proof of the next two claims follows the article of Milnor~\cite[p.125]{MilnorInfiniteCyclic}, but we adapt them for the reader's convenience.
\begin{claim} 
\label{claim:InclusionInducedIsZero}
For any $t \in \R$, there exists an integer $k > 0$ such that whenever $b > k$, then the inclusion induced map $H_i(X^\infty, L_t; M) \to H_i(X^\infty, L_{t-b};M)$ is zero.
\end{claim}
The $\F$-vector space $H_i(X^\infty, L_t; M)$ is finite dimensional by the first item of Lemma~\ref{lem:FiniteGenerated}.
We can thus pick a collection of cycles $\{c_1, \ldots, c_n \} \subset C_i(X^\infty; L_t;M)$ as $\F$-generators. 
For a suitably large integer $k > 0$, all the $c_i$ will be contained in~$C_i(L_{t-k}\subset X^\infty;M)$.
As a consequence, for all $b \geq k$, the inclusion induced map~$ H_i(X^\infty, L_t; M) \rightarrow H_i(X^\infty, L_{t-b};M)$ is the zero map, proving 
the claim. \claimbox

\begin{claim}
\label{claim:InclusionInducedIsZero2}
For any $t \in \R$, there exists an integer $k > 0$ such that whenever $b > k$, the inclusion induced map $H_i(X^\infty, L_{t+b}; M) \to H_i(X^\infty, L_t; M)$ is zero. 
Thus, the inverse system $H_i(X^\infty, L_t; M)$ is trivially Mittag-Leffler and $\ilim H_k(X^\infty, L_t; M) =~0$.
\end{claim}
Using the translation maps defined in~\eqref{eq:Translation}, we will reduce the first assertion to Claim~\ref{claim:InclusionInducedIsZero}.
Recall that $X$ is equipped with a map $f \colon X \to S^1$.
For a $t \in \R$, pick a $k$ as given by Claim~\ref{claim:InclusionInducedIsZero} and, since $f_* \colon \pi_1(X) \to \Z$ was assumed to be an epimorphism, choose~$g \in \pi_1(X)$ with $f_* (g) = k \in \Z$. 
Recall from~\eqref{eq:Translation} that the associated translation~$t_g$ is a map
\[ t_g \colon H_i(X^\infty, L_j; M) \to H_i(X^\infty, t_g(L_j); M),\]
and that $t_g(L_j) = L_{j-k}$ for any $j \in \R$. 
For a  given $b > k$, consider the diagram
\begin{center}
\label{eq:CommutTranslation}
\begin{tikzcd}
	H_i(X^\infty, L_{t+b-k}; M)  \ar[r, "0"]  & H_i(X^\infty, L_{t-k};M) \\
	H_i(X^\infty, L_{t+b}; M) \ar{u}{t_g}[swap]{\cong} \ar[r] & H_i(X^\infty, L_{t};M) \ar{u}{t_g }[swap]{\cong}.
\end{tikzcd}
\end{center}
The fact that this diagram commutes can already be checked on the chain level. 
The top map factors through $H_i(X^\infty, L_{t}; M) \to H_i(X^\infty, L_{t-k};M)$ which is zero by Claim~\ref{claim:InclusionInducedIsZero}, and so the top map is zero as well.
Therefore, by commutativity of the diagram, $H_i(X^\infty, L_{t+b}; M) \to H_i(X^\infty, L_{t};M)$ is also the zero map. 
This concludes the proof of the claim. \claimbox

We now slightly deviate from Milnor's original argument, since we work in homology instead of cohomology. 
To conclude the proof of the proposition, we have to argue that in our situation, the inverse limit commutes with homology, i.e. that~$\ilim H_k( X^\infty, L_t; M) = H_k ( C_\bullet^-(X^\infty; M))$.
The failure of this fact is measured by a Milnor exact sequence~\cite[Theorem 3.5.8]{Weibel94}, provided $C_\bullet(X^\infty, L_t;M)$ satisfies the Mittag-Leffler condition.

Since~$\lbrace C_k(X^\infty, L_t; M) \rbrace_{t \in \R}$ is indeed Mittag-Leffler for every $k$ (recall Remark~\ref{rem:IsMittagLeffler}),
an application of~\cite[Theorem 3.5.8]{Weibel94} results in the short exact sequence
\begin{equation}
\label{eq:MilnorSequence}
 0 \to \limone H_{k+1} (X^\infty, L_t;M) \to H_k\big( \ilim C_\bullet(X^\infty, L_t; M) \big) 
	\to  \ilim H_k(X^\infty, L_t; M)   \to 0.
	\end{equation}
By Claim~\ref{claim:InclusionInducedIsZero2}, the inverse system $H_{k+1} (X^\infty, L_t;M)$ is (trivially) Mittag-Leffler as the image stabilises to the zero submodule for $t$ sufficiently large. 
We deduce that~$\limone H_{k+1} (X^\infty, L_t;M) = 0$ \cite[Proposition 3.5.7]{Weibel94}.
The short exact sequence displayed in~\eqref{eq:MilnorSequence} and Claim~\ref{claim:InclusionInducedIsZero2} thus imply that
\[ H_k( C^-_\bullet(X^\infty; M) ) = \ilim H_k(X^\infty, L_t; M) = 0. \]
We have therefore established that $C^-_\bullet(X^\infty; M)$ is acyclic.
\end{proof}

Next, we prove the analogue of Proposition~\ref{prop:PMAcyclic} in cohomology.
\begin{proposition}\label{prop:PMAcyclicCohomology}
	The cochain complexes~$C_-^\bullet(X^\infty; M)$ and $C_+^\bullet(X^\infty; M)$ are acyclic. 
\end{proposition}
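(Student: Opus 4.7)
The plan is to mirror the argument of Proposition~\ref{prop:PMAcyclic}, trading inverse limits for directed colimits and invoking $\F$-linear duality to reduce to the homological claims already established.

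First, directed colimits of $\F$-modules are exact (as recalled in Remark~\ref{rem:CompactlySupportedCohomology}), so cohomology commutes with the defining colimit:
\[
H^k(C_-^\bullet(X^\infty;M)) \;\cong\; \colim_t H^k(X^\infty, L_t;M).
\]
By the second part of Lemma~\ref{lem:FiniteGenerated}, each $H^k(X^\infty, L_t;M)$ is a finite-dimensional $\F$-vector space, so this colimit vanishes if and only if every class in $H^k(X^\infty, L_t;M)$ is killed by a sufficiently large direct-system map to $H^k(X^\infty, L_b;M)$.

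To establish this, I would dualise. Applying Lemma~\ref{lem:nonSingularEval} with $\pi=\pi_1(X^\infty)$ and $C$ the cellular $\Z[\pi]$-chain complex of the universal cover of the pair $(X^\infty, L_t)$ yields a natural $\F$-linear isomorphism
\[
H^k(X^\infty, L_t;M) \;\xrightarrow{\;\cong\;}\; \Hom_{\F}\!\big(H_k(X^\infty, L_t;M),\,\F\big).
\]
Naturality in the chain complex (which follows from the constructions of $\op{ev}_M$ and $b_*$ in Subsection~\ref{sub:EvaluationMaps}) identifies the inclusion-induced cohomology map $H^k(X^\infty, L_a;M) \to H^k(X^\infty, L_b;M)$ for $a \leq b$ with the $\F$-dual of the homology map $H_k(X^\infty, L_b;M) \to H_k(X^\infty, L_a;M)$. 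By Claim~\ref{claim:InclusionInducedIsZero2} from the proof of Proposition~\ref{prop:PMAcyclic}, the latter is zero whenever $b - a$ is sufficiently large, so its $\F$-dual vanishes as well.

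Putting these ingredients together, every class in $H^k(X^\infty, L_t;M)$ dies in the colimit, proving $\colim_t H^k(X^\infty, L_t;M)=0$ and hence acyclicity of $C_-^\bullet(X^\infty;M)$. The case of $C_+^\bullet(X^\infty;M)$ is identical after replacing $L_t$ by $R_t$ and invoking the analogue of Claim~\ref{claim:InclusionInducedIsZero2} at the positive end. The only mildly delicate point is verifying the naturality of Lemma~\ref{lem:nonSingularEval} under chain maps between the relevant $\Z[\pi]$-chain complexes, but this is immediate because every step of the composition $b_* \circ \op{ev}_M$ is manifestly functorial in $C$.
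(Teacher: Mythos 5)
Your proof is correct and uses the same key ingredients as the paper's own argument: Lemma~\ref{lem:nonSingularEval} to identify relative twisted cohomology with the $\F$-dual of relative twisted homology, Lemma~\ref{lem:FiniteGenerated} for finite dimensionality, the vanishing established in the proof of Proposition~\ref{prop:PMAcyclic}, and exactness of filtered colimits. The small difference in route is worth flagging: the paper re-derives the cohomological vanishing in two steps (first Claim~\ref{claim:CohomVanishMap} in the ``wrong'' direction, via a finite collection of homology classes detecting cohomology, and then Claim~\ref{claim:CohomVanishMap2} obtained by translating with a deck transformation), whereas you invoke the naturality of the evaluation isomorphism of Lemma~\ref{lem:nonSingularEval} to dualise Claim~\ref{claim:InclusionInducedIsZero2} directly, obtaining the cohomological analogue of Claim~\ref{claim:CohomVanishMap2} in one stroke. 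This is a genuine streamlining, as it avoids re-running the translation argument in cohomology; what it requires, and what you correctly identify as the one point to check, is that every stage of the composition $b_* \circ \op{ev}_M$ is functorial in the $\Z[\pi]$-chain complex, which follows directly from the explicit formulas in Subsection~\ref{sub:EvaluationMaps}. Either way, the heart of the matter is the same: finite dimensionality plus the homological vanishing of Claim~\ref{claim:InclusionInducedIsZero2} forces the cohomological inclusion-induced maps to vanish eventually, hence the colimit vanishes.
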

\begin{proof}
	We only consider the case~$C_-^\bullet(X^\infty;M)$, and fix a positive integer $k$.
	By Lemma~\ref{lem:nonSingularEval}, the evaluation induces an isomorphism
	\[ H^k(X^\infty, L_t; M) \xrightarrow{\sim} \Hom_{\text{right}-\F}\big( H_k(X^\infty, L_t; M)^\tr, \F\big). \]
	By Lemma~\ref{lem:FiniteGenerated}, the $\F$-vector space $H_k(X^\infty, L_t; M)$ is finite dimensional and so there exists finitely many $c_i \in H_k(X^\infty, L_t; M)$ with the following property: for every~$\alpha \in H^k(X^\infty, L_t;M)$ if $\alpha(c_i) = 0$ for each $i$, then $\alpha = 0$.
For instance, the $c_i$ can be taken to be a basis of the $\F$-vector space $H_k(X^\infty, L_t; M)$.

\begin{claim}
\label{claim:CohomVanishMap}
	For any $t \in \R$, there exists an integer $k > 0$ such that whenever $b > k$, then the map $H^k(X^\infty, L_{t-b}; M) \to H^k(X^\infty, L_{t};M)$ is zero.  
\end{claim}
For large enough $k$, the elements $c_i$ will lie in the kernel of the inclusion induced map $H_k(X^\infty, L_t; M) \to H_k(X^\infty, L_{t-b}; M)$. 
Consequently, any element~$\alpha$ in the image of $H^k(X^\infty, L_{t-b}; M) \to H^k(X^\infty, L_{t};M)$ will vanish on all the $c_i$, and therefore~$\alpha = 0$. 
This shows the claim. \claimbox

\begin{claim} 
\label{claim:CohomVanishMap2}
	For any $t \in \R$, there exists an integer $k > 0$ such that whenever $b > k$, the map $H^k(X^\infty, L_{t}; M) \to H^k(X^\infty, L_{t+b};M)$ is zero. 
\end{claim}
We proceed as above in the proof for the corresponding statement in homology (recall Claim~\ref{claim:InclusionInducedIsZero2}): the deck transformations allow us to translate Claim~\ref{claim:CohomVanishMap} to the desired statement. \claimbox

Claim~\ref{claim:CohomVanishMap2} implies that $\clim H^k(X^\infty, L_{t};M) = 0$.
As taking the colimit of a directed system of $R$--modules is exact~\cite[Theorem~2.6.15]{Weibel94}, we obtain that $H^k(C_-^\bullet(X^\infty; M) )=\clim H^k(X^\infty, L_{t};M)=~0$ and the proposition is established.
\end{proof}

\subsection{The homological geometric twisted Milnor pairing}
\label{sub:TwistedMilnorGeometricHomological}
We give another definition of the twisted Milnor pairing.
This approach is a twisted generalisation of an outline provided by Litherland~\cite[Appendix B]{LitherlandCobordism}: it relies on the infinite cyclic cover of $X$ instead of on the use of rings of power series.
\medbreak

In order to define the Milnor pairing using the (co)homology of $X^\infty$, we first review a non-compact version of Poincar\'e duality.
Assume that $X$ is a closed $n$-manifold; in short order we will return to the case $n=3$.
We describe a fundamental class representative $[X^\infty] \in C^\lf_n(X^\infty; \Z^\text{triv})$ for the infinite cyclic cover~$X^\infty$.
This class is obtained from a representative $[X] = \sum_i 1\otimes c_i \in C_n(X; \Z^\text{triv})$ as follows:
\begin{equation}
\label{eq:FundamentalCycle}
[X^\infty] := \sum_{i, [g]} 1\otimes g^{-1}\cdot c_i .
\end{equation}
This is indeed a fundamental class in the usual sense, since for each point $x \in X^\infty$, the class $[X^\infty]$ is sent to the positive generator of $H_n(X^\infty, X^\infty \sm \{x\}; \Z^\text{triv})$.
As we shall see in Proposition~\ref{prop:LocallyFinitePD} below, the $\F$-linear cap product
\[ \cap [X^\infty] \colon C^k(X^\infty; M) \xrightarrow{\sim} C_{n-k}^\lf (X^\infty; M) \]
induces an isomorphism in (co)homology. 
Temporarily taking this result for granted, we make the following definition.
\begin{definition}\label{def:PoincareLF}
The \textit{twisted locally finite Poincaré duality isomorphism}
\[ \PD^\lf_{X^\infty; M} \colon H_k^\lf(X^\infty;M) \to H^{n-k}(X^\infty; M)\] 
is the inverse of the $\F$-linear isomorphism $\cap [X^\infty]$.
\end{definition}
We now prove that $\cap [X^\infty]$ is indeed an isomorphism. The proof is a generalisation of \cite[Theorem~3.1]{Laitinen96}.

\begin{proposition}\label{prop:LocallyFinitePD}
	The cap product $\cap [X^\infty] \colon H^k(X^\infty; M) \to H_{n-k}^\lf(X^\infty; M)$ is an $\F$-isomorphism.
\end{proposition}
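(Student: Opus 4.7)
The plan is to exhibit $\cap [X^\infty]$ as an inverse limit of classical Poincaré--Lefschetz dualities over a compact exhaustion of $X^\infty$. Set $K_t := F^{-1}([-t,t])$ for $t \in \N$: each $K_t$ is a compact $n$--submanifold with boundary, $K_t \subset K_{t+1}$, and $X^\infty = \bigcup_t K_t$. Since every singular chain has compact image, $C_\bullet(X^\infty; M) = \colim_t C_\bullet(K_t; M)$; dualising and applying excision together with Remark~\ref{rem:FundamentalClassInfiniteCyclic} yield the identifications
\[ C^\bullet(X^\infty; M) \cong \invlim_t C^\bullet(K_t; M), \qquad C^\lf_\bullet(X^\infty; M) \cong \invlim_t C_\bullet(K_t, \partial K_t; M). \]
In both towers the transition maps are surjective, hence satisfy the Mittag-Leffler condition at the chain level.

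Next I would verify that $\cap [X^\infty]$ is compatible with these identifications. The restriction of the fundamental cycle~\eqref{eq:FundamentalCycle} under the quotient $C^\lf_n(X^\infty; \Z^{\op{triv}}) \to C_n(K_t, \partial K_t; \Z^{\op{triv}})$ represents the relative fundamental class $[K_t, \partial K_t]$, so $\cap [X^\infty]$ is the inverse limit of the cap product chain maps $\cap [K_t] \colon C^\bullet(K_t; M) \to C_{n-\bullet}(K_t, \partial K_t; M)$.

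For each $t$, twisted Poincaré--Lefschetz duality applied to the compact $n$--manifold with boundary $(K_t, \partial K_t)$ and coefficients $M$ (restricted along $\pi_1(K_t) \to \pi_1(X^\infty) \hookrightarrow \pi_1(X)$) asserts that $\cap [K_t]$ is a quasi-isomorphism. The Milnor exact sequence~\cite[Theorem~3.5.8]{Weibel94} associated to the two towers then produces a commutative diagram
\[
\begin{tikzcd}[column sep=small]
0 \ar[r] & \limone H^{k-1}(K_t; M) \ar[r] \ar[d, "\cong"] & H^k(X^\infty; M) \ar[r] \ar[d] & \invlim H^k(K_t; M) \ar[r] \ar[d, "\cong"] & 0 \\
0 \ar[r] & \limone H_{n-k+1}(K_t, \partial K_t; M) \ar[r] & H_{n-k}^\lf(X^\infty; M) \ar[r] & \invlim H_{n-k}(K_t, \partial K_t; M) \ar[r] & 0
\end{tikzcd}
\]
in which the outer vertical arrows are isomorphisms of inverse systems by the preceding step. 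The five lemma then forces the middle arrow to be an isomorphism, as required.

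The main obstacle is the twisted Poincaré--Lefschetz duality for $(K_t, \partial K_t)$: although the untwisted version is classical and the proof (either by doubling $K_t$ along its boundary and invoking the duality of Subsection~\ref{sub:PD}, or by a direct Mayer--Vietoris induction on a handle decomposition) transports to the twisted setting, one must check that the pieces of the Mayer--Vietoris comparison involve the correct restrictions of the bimodule $M$ and that the various fundamental classes are compatible with inclusions $K_t \hookrightarrow K_{t+1}$ so that the cap products assemble into a genuine morphism of inverse systems.
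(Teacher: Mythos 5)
Your argument is essentially the paper's proof: exhaust $X^\infty$ by the compact submanifolds $Y_t := F^{-1}([-t,t])$, apply twisted Poincar\'e--Lefschetz duality to each piece, and pass to the inverse limit controlled by the Milnor $\limone$ exact sequence. The only differences are cosmetic: the paper sets up the inverse system with $C_\bullet(X^\infty, X^\infty \setminus \Int Y_t; M)$ (whose transition maps are honest quotient maps of chain complexes) and invokes excision only at the homology level to compare with $H_\bullet(Y_t, \partial Y_t; M)$ --- a cleaner formulation than writing $\ilim C_\bullet(K_t, \partial K_t; M)$, which conceals a chain-level excision --- and it explicitly shows both $\limone$ terms vanish by finite dimensionality, whereas you conclude via the five lemma; both finishes are valid.
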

\begin{proof}
Recall that $F \colon X^\infty \to \R$ denotes a lift of the map $f \colon X \to S^1$.
For regular values $t > 0$ of $F \colon X^\infty \to \R$, consider the $n$--dimensional submanifolds $Y_t = F^{-1}([-t,t])$ and their interiors~$\Int Y_t = F^{-1}((-t,t))$. We will reduce the statement of the proposition to Poincaré duality on~$Y_t$.

Note that we have $Y_t \subset Y_{t+1}$ and $\bigcup_t Y_t=X^\infty$. 
We abbreviate the chain complex~$C_\bullet(X^\infty, X^\infty \sm \Int Y_t; M)$ by $C_\bullet(X^\infty | \Int Y_t;M)$. Recall that $C_k^\lf(X^\infty; M)$ is the inverse limit of~$C_k(X^\infty | \Int Y_t; M)$, and denote the corresponding maps by
\[ q_{Y_t} \colon C_k^\lf(X^\infty; M) \to C_k(X^\infty | \Int Y_t; M). \]
For every~$t>0$, the cap product gives rise to the commutative diagram 
\begin{equation}\label{eq:CapLFCommtative}
\begin{tikzcd}
	C^k(X^\infty; M) \times C^\lf_n(X^\infty; \Z^\triv) \ar{d}{\id \times q_{Y_t}}  \ar{r}{\cap} & C^\lf_{n-k}(X^\infty; M) \ar{d}{q_{Y_t}}\\
	C^k(X^\infty; M) \times C_n(X^\infty| \Int Y_t; \Z^\triv)  \ar{r}{\cap} & C_{n-k}(X^\infty| \Int Y_t; M).
\end{tikzcd}
\end{equation}
Take the homology of this diagram, and denote the image of $[X^\infty]$ by 
\[ [X^\infty]_t := \big( q_{Y_t} \big)_* [X^\infty] \in H_n(X^\infty|\Int Y_t; M). \] 
Diagram~\eqref{eq:CapLFCommtative} shows that (in homology) 
for every $\alpha \in H^k(X^\infty; M)$, we have
\[ \big( q_{Y_t}\big)_*\left(\alpha \cap [X^\infty]\right)  = \alpha \cap [X^\infty]_t .\]
Now consider the cap product~$\cap [X^\infty]_t \colon H^k(X^\infty; M) \to H_{n-k}(X^\infty | Y_t; M)$ for varying $t$: 
\begin{equation}\label{eq:CapLFCommtativeHomology}
	\begin{tikzcd}[column sep = 2cm]
	H^k(X^\infty; M) \ar{d}{\id}  \ar{r}{\cap [X^\infty]} & H^\lf_{n-k}(X^\infty; M) \ar{d}{\big( q_{Y_t}\big)_*}\\
	H^k(X^\infty; M)  \ar{r}{\cap [X^\infty]_t} & H_{n-k}(X^\infty| \Int Y_t; M).
\end{tikzcd}
\end{equation}

As \eqref{eq:CapLFCommtativeHomology} is compatible with the map~$(X^\infty, X^\infty \sm \Int Y_{t'}) \subset (X^\infty, X^\infty \sm \Int Y_{t})$ of pairs for $t < t'$, we can take the inverse limit of the bottom row, which results in the commutative diagram
\[
	\begin{tikzcd}[column sep = 2cm]
		H^k(X^\infty; M) \ar{d}{\id}  \ar{r}{\cap [X^\infty]} & H^\lf_{n-k}(X^\infty; M) \ar{d}{\ell_{n-k}}\\
	H^k(X^\infty; M) \ar{r}{ \ilim \big( \cap [X^\infty]_t \big)} & \ilim H_{n-k}(X^\infty| \Int Y_t; M),
\end{tikzcd}
\]
where we have made the abbreviation~$\ell_{n-k} := \ilim \big( q_{Y_t}\big)_*$.

Next, recall the Milnor exact sequence; see e.g.~\cite[Theorem~3.5.8]{Weibel94},
and fit it into the following commutative diagram:
\[
	\begin{tikzcd}
		& H^k(X^\infty; M) \ar{d}{\cap [X^\infty]} \ar{dr}{\ilim \big( \cap[X^\infty]_t \big)} &\\
		\limone H_{n-k+1}(X^\infty|  \Int Y_t; M) \ar[r] &  H^\lf_{n-k}(X^\infty; M) \ar[r,"\ell_{n-k}"] & \ilim H_{n-k}(X^\infty| \Int Y_t; M) \ar[r] & 0. 
	\end{tikzcd}
\]
By excision,~$H_{n-k+1}(X^\infty|  \Int Y_t; M) \cong H_{n-k+1}(Y_t, \partial Y_t; M)$, and since the space~$Y_t$ is a compact submanifold, the later $\F$-vector space is finite dimensional.
Since for every~$t > 0$, the vector space $H_{n-k+1}(X^\infty|  \Int Y_t; M)$ is finite dimensional, the term~$\limone H_{n-k+1}(X^\infty|  \Int Y_t; M)$ vanishes~\cite[Exercise~3.5.2]{Weibel94}. 
Since $\ell_{n-k}$ is an isomorphism, it remains to show that~$\ilim \big(\cap [X^\infty]_t\big)$ is an isomorphism. 

The compact submanifold $Y_t \subset X^\infty$ has codimension~$0$ and its fundamental class~$[Y_t]$ agrees with the image of $[X^\infty]$ under the map
\begin{align*}
H_n^\lf(X^\infty; \Z^{\op{triv}}) \xrightarrow{{q_{Y_t}}_*} H_n(X^\infty | \Int Y_t;\Z^{\op{triv}} )  \xrightarrow{\op{exc}} H_n(Y_t, \partial Y_t; \Z^{\op{triv}}),
\end{align*}
where $\op{exc}$ denotes the excision isomorphism.
Note also that by definition of $[X^\infty]_t$, we have $\op{exc} \circ {q_{Y_t}}_*([X^\infty])=\op{exc}([X^\infty]_t)=[Y_t]$.
Consequently, the following diagram is commutative
\begin{equation}
\label{eq:Defft}
 \begin{tikzcd}
	H^k (X^\infty; M) \ar[d] \ar{r}{\cap [X^\infty]_t} & H_{n-k}(X^\infty | \Int Y_t; M) \ar{d}{\op{exc}}[swap]{\cong}\\
	H^k (Y_t \subset X^\infty; M)\ar{r}{\cap [Y_t]}[swap]{\cong} \ar[dashed]{ur}{f_t} & H_{n-k}(Y_t \subset X^\infty, \partial Y_t; M),
\end{tikzcd}
\end{equation}
where $f_t:=\op{exc}^{-1} \circ \big(\cap [Y_t]\big)$.
The map~$\cap [Y_t]$ is an isomorphism by Poincaré duality for the compact manifold~$Y_t$, and since excision is an isomorphism, the map~$f_t$ is also an isomorphism. 

We apply the Milnor exact sequence to the chain module~$C^k(X^\infty;M)$, which is~$\Hom_\F(\clim C_k(Y_t \subset X^\infty;\Z[\pi]),M))\cong \ilim\Hom_\F(C_k(Y_t \subset X^\infty;\Z[\pi])^\tr,M))$.
In other words, we view $C^k(X^\infty;M)$ as $\ilim C^k(Y_t \subset X^\infty;M)$.
We argue as in homology that~$\limone H^{k-1}(Y_t\subset X^\infty; M) = 0$.
Consequently, the Milnor exact sequence shows that the following map is an isomorphism:
\[ \ell^k \colon H^k(X^\infty; M) \to \ilim H^k(Y_t \subset X^\infty; M).\] 
Using the diagram in~\eqref{eq:Defft} and the universal property of the inverse limit, we obtain the following commutative diagram:
\[
	\begin{tikzcd}
		H^k(X^\infty; M) \ar{r}{\ilim \big(\cap [X^\infty]_t\big)} 
		\ar{d}{\cong}[swap]{\ell^k} & 
\ilim H_{n-k}(X^\infty| \Int Y_t; M)    \\
		\ilim H^k(Y_t \subset X^\infty; M) \ar{ru}{\cong}[swap]{\ilim f_t}  . 
	\end{tikzcd}
\]
The map $\ilim f_t$ is an isomorphism, since each map $f_t$ is an isomorphism. Deduce from the diagram that $\ilim \big( \cap [X^\infty]_t\big)$ is an isomorphism. Above, we have seen that this implies that $\cap [X^\infty]$ is also an isomorphism, which proves the proposition.
\end{proof}

We record the corresponding isomorphism for cohomology with compact support.
\begin{remark}
\label{rem:PDCompactlySupported}
We construct the compactly supported Poincar\'e duality $\F$-isomorphism 
\[\PD_{X^\infty,\op{cs}}  \colon  H_{n-k}(X^\infty;M) \to H^k_{\op{cs}}(X^\infty;M).\]
As in Proposition~\ref{prop:LocallyFinitePD}, we set~$C_\bullet(X^\infty | \Int Y_t;M):=C_\bullet(X^\infty, X^\infty \sm \Int Y_t; M)$, and similarly in cohomology.
According to~\cite[Lemma 3.27]{Hatcher}, there is a fundamental class in $C_\bullet(X^\infty | \Int Y_t;\Z^{\op{triv}})$, which we denote $[X^\infty | \Int Y_t]$.
The cap product with~$\cap [X^\infty | \Int Y_t] $ yields a map
\[ \cap [X^\infty | \Int Y_t] \colon C^k(X^\infty | \Int Y_t;M) \to C_{n-k}(X^\infty;M).\]
As in~\cite[p.245]{Hatcher}, it can be verified that these cap products are compatible with the inclusions $Y_a\subset Y_b$ for $a \leq b$.
Taking the colimit, similar arguments as in the proof of Proposition~\ref{prop:LocallyFinitePD} show that the following map is an isomorphism:
\[
 \clim \cap  [X^\infty | \Int Y_t] \colon H^k_{\op{cs}}(X^\infty|Y_t;M) \to H_{n-k}(X^\infty;M).
\]
Here, taking the colimit in (co)homology is justified since colimits are exact.
We define the $\F$-isomorphism $\PD_{X^\infty,\op{cs}} $ as the inverse of $\limd \cap  [X^\infty | \Int Y_t]$.
\end{remark}

Let $X$ be a closed $3$-manifold, and let~$(M,b)$ be 
a unitary representation such that the $\F$-vector space $H_1(X^\infty;M)$ is finite dimensional.
In order to define a twisted Milnor pairing, consider the $\F$--linear map obtained as the composition
\begin{align*}
	H_1(X^\infty;M) 
	&\xrightarrow{\big(\beta^\lf\big)^{-1}} H_2^\lf(X^\infty;M) \\
	&\xrightarrow{\PD_{X^\infty}^\lf} H^1(X^\infty;M) \\
	&\xrightarrow{\op{ev}_M} \text{Hom}_{\text{right-}\F}( H_1(X^\infty;M)^\tr,M\otimes_{\Z[\pi^\infty]}M^\tr)\\
	&\xrightarrow{\op{quot}_*} \text{Hom}_{\text{right-}\F}( H_1(X^\infty;M)^\tr,M\otimes_{\Z[\pi]}M^\tr)\\
	&\xrightarrow{b_*} \text{Hom}_{\text{right-}\F}( H_1(X^\infty;M)^\tr, \F). \tag{GeoMilnor}\label{eq:GeoMilnor}
\end{align*}
of the five maps defined as follows.
Firstly, $\beta^\lf$ is the connecting $\F$--homomorphism that arises from the short exact sequence 
\[ 0 \to C_\bullet(X^\infty,M) \to C_\bullet^-(X^\infty,M) \oplus C_\bullet^+(X^\infty,M) \to C_\bullet^{\operatorname{lf}}(X^\infty,M) \to 0 \]
described in Lemma~\ref{lem:PMShortExact}; this Bockstein homomorphism is invertible thanks to Proposition~\ref{prop:PMAcyclic}.
Secondly, recall that the map $\PD_{X^\infty}^\lf$ is the twisted locally finite Poincar\'e duality $\F$-isomorphism from Definition~\ref{def:PoincareLF}.
Thirdly, $\op{ev}_M$ is the~$\F$-linear evaluation map described in Subsection~\ref{sub:EvaluationMaps}.
Fourthly, $\op{quot}_*$ is the $\F$-linear map induced by the quotient map $M\otimes_{\Z[\pi_1(X^\infty)]} M^\tr \to M\otimes_{\Z[\pi_1(X)]} M^\tr$. 
Fifthly and finally, $b_*$ is the $\F$-linear map induced by the pairing $b \colon M \otimes_{\Z[\pi_1(X)]} M^\tr \to \F$.

We can now extend Litherland's approach~\cite{LitherlandCobordism} to the twisted setting.
\begin{definition}
\label{def:GeometricTwistedMilnor}
Let $X$ be a closed $3$-manifold, and let $M$ be 
a unitary representation such that $H_1(X^\infty;M)$ is a finite dimensional $\F$-vector space.
The \emph{geometric twisted Milnor pairing}
\[ \mu_{X,M}^\infty \colon H_1(X^\infty;M) \times H_1(X^\infty;M) \rightarrow \F\]
is the $\F$--sesquilinear pairing defined by the composition in \eqref{eq:GeoMilnor}.
\end{definition}

\subsection{Relating the two homological pairings}
\label{sub:RelatingHomologicalPairings}
The goal of this subsection is to show that the twisted Milnor pairing of Definition~\ref{def:TwistedMilnorPairing} is isometric to the geometric twisted Milnor pairing of Definition~\ref{def:GeometricTwistedMilnor}.
\medbreak
Throughout this subsection, we fix a closed $3$--manifold $X$, and an epimorphism~$\psi \colon \pi_1(X) \twoheadrightarrow \Z$.
The first step to construct the isometry between the two Milnor pairings is to define the underlying isomorphism of $\Lambda$-modules.
Namely, we need to relate the $\Lambda$-module $H_k(X; M(\Lambda))$ described in Construction~\ref{cons:LambdaCoeff} with the $\Lambda$-module $H_k(X^\infty; M)$ of Construction~\ref{const:LambdaModule}.
This isomorphism is established in~\cite[Theorem 2.1]{KirkLivingston} but we recall the explicit map for later use.

\begin{lemma}[Shapiro]\label{lem:weakshapiro}
Let $M$ be a $(\F, \Z[\pi_1(X)])$--bimodule. 
The following map is an isomorphism of $\Lambda$--modules:
\begin{align*}
	\op{dst} \colon C_\bullet(X; M(\Lambda) ) &\rightarrow C_\bullet(X^\infty;M)\\
	 m t^{\psi(g)} \otimes_{\Z[\pi_1(X)]} c &\mapsto m\cdot g^{-1} \otimes_{\Z[\pi_1(X^\infty)]} (g \cdot c),
\end{align*}
\end{lemma}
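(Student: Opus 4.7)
The plan is to recognize $\op{dst}$ as a chain-level instance of Shapiro's lemma, combined with a direct verification of $\Lambda$-equivariance. Write $G = \pi_1(X)$ and $H = \pi_1(X^\infty)$, so that $\psi$ identifies $G/H$ with $\Z$, and regard $C_\bullet(\widetilde X)$ simultaneously as a complex of free left $\Z[G]$-modules and, by restriction along $H \hookrightarrow G$, as a complex of free left $\Z[H]$-modules.

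First, I would establish an isomorphism of right $\Z[G]$-modules
\[ \Phi \colon M \otimes_{\Z[H]} \Z[G] \xrightarrow{\cong} M(\Lambda), \qquad m \otimes g \mapsto (m \cdot g)\, t^{\psi(g)}. \]
Well-definedness over $\Z[H]$ is immediate from $\psi|_H = 0$, and $\Z[G]$-equivariance is a short calculation using the diagonal right $G$-action on $M(\Lambda)$. The inverse is $m t^n \mapsto (m \cdot g^{-1}) \otimes g$ for any $g$ with $\psi(g) = n$, and independence of this choice follows by moving elements of $H$ across the tensor product.

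Second, I would apply the functor $(-) \otimes_{\Z[G]} C_\bullet(\widetilde X)$ to $\Phi^{-1}$ and compose with the standard associativity isomorphism
\[ \bigl(M \otimes_{\Z[H]} \Z[G]\bigr) \otimes_{\Z[G]} C_\bullet(\widetilde X) \;\cong\; M \otimes_{\Z[H]} C_\bullet(\widetilde X). \]
This yields an $\F$-linear chain isomorphism $C_\bullet(X; M(\Lambda)) \to C_\bullet(X^\infty; M)$, since naturality in the $\Z[G]$-module variable ensures that this transformation commutes with the $\Z[G]$-linear boundary operators of $C_\bullet(\widetilde X)$. Tracing through the definitions on generators, the composite sends $m t^{\psi(g)} \otimes c$ to $m g^{-1} \otimes g c$, so it coincides with $\op{dst}$ on the nose.

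Third, I would verify $\Lambda$-equivariance. On the domain, $t$ acts by multiplication on the $\Lambda$-factor; on the codomain, Construction~\ref{const:LambdaModule} provides the deck-transformation action $g \cdot (m \otimes c) = m g^{-1} \otimes g c$. Fixing $g_1 \in G$ with $\psi(g_1) = 1$ and setting $g_n := g_1^n$, one computes directly that
\[ \op{dst}\bigl(t \cdot (m t^n \otimes c)\bigr) = m g_{n+1}^{-1} \otimes g_{n+1} c = t \cdot \op{dst}(m t^n \otimes c), \]
so $\op{dst}$ intertwines the two $\Lambda$-module structures. The only obstacle is notational bookkeeping between left versus right actions and between the diagonal $G$-action on $M(\Lambda)$ and the induced-module presentation; once this is sorted out, the underlying content is exactly Shapiro's lemma applied degreewise, and the explicit formula for $\op{dst}$ is arranged precisely to match the image of $\Phi^{-1}$ on generators.
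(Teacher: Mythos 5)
Your proof is correct, and the underlying mathematics is the same Shapiro's-lemma content the paper invokes, but your organization is genuinely different. The paper proves the lemma head-on at the chain level: it verifies that the formula for $\op{dst}$ respects the defining relations of $C_\bullet(X;M(\Lambda))$ (both the independence from the coset representative $g$ and the $\Z[\pi_1(X)]$-tensor relation), observes that the same manipulation gives $\Z$-equivariance, and then writes down the explicit inverse $m \otimes_{\Z[\pi_1(X^\infty)]} c \mapsto m t^0 \otimes_{\Z[\pi_1(X)]} c$. You instead factor the argument through the standard induced-module isomorphism $\Phi \colon M \otimes_{\Z[H]}\Z[G] \xrightarrow{\cong} M(\Lambda)$ of $(\F,\Z[G])$-bimodules, then obtain $\op{dst}$ by tensoring $\Phi^{-1}$ with $C_\bullet(\widetilde X)$ over $\Z[G]$ and using the cancellation isomorphism $\bigl(M\otimes_{\Z[H]}\Z[G]\bigr)\otimes_{\Z[G]}C_\bullet(\widetilde X)\cong M\otimes_{\Z[H]}C_\bullet(\widetilde X)$, finishing with a separate $\Lambda$-equivariance check. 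Your route makes the ``Shapiro'' structure explicit and isolates where each identity is used (well-definedness and bijectivity come for free from $\Phi$ being an isomorphism of modules), at the cost of being somewhat longer; the paper's route is more compressed, with the inverse and the equivariance handled by direct computation. Both are standard and both are complete. One small tidiness point in your write-up: when you verify independence of the choice of $g$ with $\psi(g)=n$ in the inverse of $\Phi$, you implicitly use that $H\trianglelefteq G$ to rewrite $gh=(ghg^{-1})g$ before sliding across the tensor; it would be worth saying so explicitly, as this is the one place normality of $\pi_1(X^\infty)$ in $\pi_1(X)$ enters.
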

\begin{proof}
This assignment is well-defined: first note, that by Construction~\ref{const:LambdaModule}, the right-hand side is exactly~$g \cdot (m \otimes_{\Z[\pi_1(X^\infty)]} c)$, which only depends on the coset~$\psi(g) \in \Z = \pi_1(X)/\pi_1(X^\infty)$. 
        Secondly, we have to verify that the assignment above is well-defined on the tensor product~$C_k(X; M(\Lambda))$. Recall the action on $M(\Lambda)$ from Construction~\ref{cons:LambdaCoeff} and note that 
        \[ m t^{\psi(g)} \otimes_{\Z[\pi_1(X)]} c = \big( (m \cdot g^{-1}) t^0\big) \cdot t^{\psi(g)} \otimes_{\Z[\pi_1(X)]} c = (m \cdot g^{-1}) t^{\psi(e)} \otimes_{\Z[\pi_1(X)]} g\cdot c. \]
Since the formula for $\op{dst}$ sends both sides of the equation to the same element, the map~$\op{dst}$ is indeed well-defined.
This also shows that the map is $\Z$--equivariant, and thus a $\Lambda$--module homomorphism.
To conclude that $\op{dst}$ is an isomorphism, verify that the inverse map is $m \otimes_{\Z[\pi_1(X^\infty)]}  c \mapsto m t^0 \otimes_{\Z[\pi_1(X)]}  c$.
\end{proof}

The $\Lambda$--chain isomorphism $\op{dst} \colon C_\bullet(X;M(\Lambda)) \to C_\bullet(X^\infty;M)$, which is described in Lemma~\ref{lem:weakshapiro}, extends to a $\Lambda$--chain map 
\begin{align*}
	\op{dst}^\lf \colon C_\bullet(X; M(\Gamma) ) &\xrightarrow{} C_\bullet^\lf(X^\infty;M)\\
	\Big( \sum  m t^{\psi(g)}\Big) \otimes_{\Z[\pi_1(X)]} c &\mapsto \sum m\cdot g^{-1} \otimes_{\Z[\pi_1(X^\infty)]} (g \cdot c).
\end{align*}
Indeed, the formula results in a locally finite chain, since for any pairs of compact sets $K, K' \subset X^\infty$, the set $\{ \ell \in \Z \colon \ell \cdot K' \cap K \neq \emptyset \}$ is finite. 
The next lemma shows that $\op{dst}^\lf$ induces a $\Lambda$--isomorphism on homology.

\begin{lemma}\label{lem:UndlerlyingIsoHomology}
	The map $\op{dst}^\lf \colon H_k(X; M(\Gamma)) \to H_k^\lf(X^\infty; M)$ is a $\Lambda$-isomorphism.
\end{lemma}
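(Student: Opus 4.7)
The strategy is to realize $\op{dst}^\lf$ as the rightmost vertical arrow in a morphism of short exact sequences of chain complexes, and then extract the $\Lambda$--isomorphism on homology from Lemma~\ref{lem:weakshapiro}, Proposition~\ref{prop:PMAcyclic}, and a flatness argument, by way of a five-lemma style diagram chase.

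First, I will introduce analogous $\Lambda$--chain maps $\op{dst}^\pm \colon C_\bullet(X; M(\Gamma^\pm)) \to C^\pm_\bullet(X^\infty; M)$ by applying to semi-infinite Laurent series the same formula already used to define $\op{dst}^\lf$. For $\Gamma^+$, the resulting chain in $X^\infty$ has support bounded below in the $F$--direction, so its images in the quotients $C_\bullet(X^\infty, R_t; M)$ are finite and compatible with the structure maps; the universal property of the inverse limit therefore produces a well defined element of $C^+_\bullet(X^\infty; M)$. The $\Gamma^-$ case is symmetric. Checking $\Lambda$--equivariance and the chain map property reuses the computation in the proof of Lemma~\ref{lem:weakshapiro}.

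Second, these maps assemble into a commutative diagram of short exact sequences of $\Lambda$--chain complexes
\[
\begin{tikzcd}[column sep=small, row sep=small]
0 \ar[r] & C_\bullet(X; M(\Lambda)) \ar[d, "\op{dst}"] \ar[r] & C_\bullet(X; M(\Gamma^-)) \oplus C_\bullet(X; M(\Gamma^+)) \ar[d, "\op{dst}^- \oplus \op{dst}^+"] \ar[r] & C_\bullet(X; M(\Gamma)) \ar[d, "\op{dst}^\lf"] \ar[r] & 0 \\
0 \ar[r] & C_\bullet(X^\infty; M) \ar[r] & C^-_\bullet(X^\infty; M) \oplus C^+_\bullet(X^\infty; M) \ar[r] & C^\lf_\bullet(X^\infty; M) \ar[r] & 0,
\end{tikzcd}
\]
whose top row is the sequence defining the Bockstein $\beta^\Gamma$ from Subsection~\ref{sub:TwistedMilnorHomological} and whose bottom row is~\eqref{eq:ShortHomology}, defining $\beta^\lf$ as in Lemma~\ref{lem:PMShortExact}. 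Commutativity is immediate because all three vertical maps arise from the same extension-of-scalars formula.

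Third, I pass to the long exact sequences in homology. By Lemma~\ref{lem:weakshapiro}, $\op{dst}$ is already a $\Lambda$--chain isomorphism. Proposition~\ref{prop:PMAcyclic} makes $C^\pm_\bullet(X^\infty; M)$ acyclic, so the connecting map $\beta^\lf$ is a $\Lambda$--isomorphism. For the top row, flatness of $\Gamma^\pm$ over $\Lambda$ (Remark~\ref{rem:GammaFields}) yields $H_k(X; M(\Gamma^\pm)) \cong H_k(X; M(\Lambda)) \otimes_\Lambda \Gamma^\pm$; combined with the standing hypothesis of Subsection~\ref{sub:Ends} that $H_k(X^\infty; M) \cong H_k(X; M(\Lambda))$ is finite dimensional over $\F$, and hence $\Lambda$--torsion, this forces $H_k(X; M(\Gamma^\pm)) = 0$. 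Therefore $\beta^\Gamma$ is also an isomorphism, and functoriality of connecting homomorphisms produces a commutative square $\beta^\lf \circ \op{dst}^\lf = \op{dst} \circ \beta^\Gamma$ in which three of the four arrows are already isomorphisms; the fourth, $\op{dst}^\lf$, must then also be a $\Lambda$--isomorphism. The main obstacle is purely bookkeeping in the first step: one must check that the infinite tails in $\op{dst}^\pm$ interact correctly with the projections defining the inverse limits $C^\pm_\bullet(X^\infty; M)$ and with the maps of the short exact sequences. Once this is dispatched, the homological part of the argument is formal.
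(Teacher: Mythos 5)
Your proof is correct and follows the same route as the paper's: both define the extensions $\op{dst}^\pm$ of $\op{dst}$, assemble the commutative ladder of short exact sequences~\eqref{eq:ChainComplexCommute}, and deduce the claim from the resulting commutative square of connecting homomorphisms (the paper's diagram~\eqref{eq:BocksteinCommute}). The paper is slightly terser at the final step---it cites Proposition~\ref{prop:PMAcyclic} and flatness of $\Gamma^\pm$ without spelling out that torsionness of $H_k(X;M(\Lambda))$ is what kills $H_k(X;M(\Gamma^\pm))$---so your write-up is just a more explicit reading of the identical argument.
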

\begin{proof}
	The $\Lambda$-chain map $\op{dst}$ also extends to $\Lambda$-chain maps $C_\bullet(X; M(\Gamma^+)) \to C_\bullet^+(X; M)$ and $C_\bullet(X; M(\Gamma^-)) \to C_\bullet^-(X; M)$ that fit into the following commutative diagram of chain complexes:
\begin{equation} \label{eq:ChainComplexCommute}
	\begin{tikzcd}
		0\ar[r] & C_\bullet(X; M(\Lambda)) \ar[d,"\op{dst}"] \ar[r] & C_\bullet(X; M(\Gamma^-)) \oplus C_\bullet(X; M(\Gamma^+)) \ar[d,"\op{dst}^\pm"] \ar[r] & C_\bullet(X; M(\Gamma))\ar[r] \ar[d,"\op{dst}^\lf"] & 0\\
		0\ar[r] & C_\bullet(X^\infty; M)\ar[r] & C^-_\bullet(X^\infty; M) \oplus C^+_\bullet(X^\infty; M)\ar[r] & C^\lf_\bullet(X;M)\ar[r] & 0.
	\end{tikzcd}
\end{equation}
Here, the bottom row is the exact sequence constructed in \eqref{eq:ShortHomology},
and the top row is induced by the short exact sequence $0 \to M(\Lambda) \to M(\Gamma^-) \oplus M(\Gamma^+) \to M(\Gamma) \to 0$ of coefficients. 
Each row gives rise to connecting homomorphisms, which were denoted by~$\beta^\lf$ and~$\beta^\Gamma$ in Subsections~\ref{sub:TwistedMilnorHomological} and~\ref{sub:TwistedMilnorGeometricHomological}.
With these notations, the diagram displayed~\eqref{eq:ChainComplexCommute} gives rise to the following commutative square in homology:
\begin{equation}\label{eq:BocksteinCommute}
	\begin{tikzcd}
		0 & \ar[l] H_{k}(X; M(\Lambda)) \ar[d,"\op{dst}"]  & \ar[swap,l,"\beta^\Gamma"] H_{k+1}(X; M(\Gamma)) \ar[d,"\op{dst}^\lf"] & \ar[l] 0\\
		0 & \ar[l] H_{k}(X^\infty; M) & \ar[swap,l, "\beta^\lf"] H^\lf_{k+1}(X^\infty;M)& \ar[l] 0.
	\end{tikzcd}
\end{equation}
To obtain the left and right zeroes, we used Proposition~\ref{prop:PMAcyclic} and the fact that $\Gamma^+$ and $\Gamma^-$ are flat over $\Lambda$.
Since $\op{dst}$ is a $\Lambda$-isomorphism (by Lemma~\ref{lem:weakshapiro}), so is $\op{dst}^\lf$.
This concludes the proof of the lemma.
\end{proof}

In order to prove a cohomological version of Lemma~\ref{lem:UndlerlyingIsoHomology}, we introduce the cohomological analogue of the chain map $\operatorname{dst}$.
This is the $\Lambda$-chain map $\psi_\Gamma$, defined~by
\begin{align}\label{eq:PsiGamma}
	\psi_\Gamma \colon C^i(X^\infty;M) &\rightarrow C^i(X;M(\Gamma)) \\
\phi & \mapsto \Big( x  \mapsto \sum_{[g]} \big( g^{-1} \cdot \phi \big)(x)\, t^{\psi(g)} \Big), \nonumber
\end{align}
where the sum runs over the cosets~$\pi_1(X)/\pi_1(X^\infty)\cong \Z$.
We observe that the inverse~$g^{-1}$ in the expression on the right-hand side is forced upon us, since $\psi_\Gamma(\phi)(x)$ has to be $\pi_1(X)$--linear in $x$: 
\begin{align*}
\Psi_\Gamma(\phi)(h^{-1} \cdot x) 
&= \sum_{[g]} \big( g^{-1} \cdot \phi \big)(h^{-1}x) \, t^{\psi(g)} \\
&=\sum_{[g]} \big( g^{-1} \cdot \phi \big)(h^{-1}x)h^{-1}h \, t^{\psi(g)}\\
&=\sum_{[g]} \big( h g^{-1} \cdot \phi \big)(x)h \, t^{\psi(g)}\\
&=\sum_{[g]} \big( g^{-1} \cdot \phi \big)(x)h \, t^{\psi(g)} t^{\psi(h)}\\
&=\sum_{[g]} \Big( \big( g^{-1} \cdot \phi \big)(x) \, t^{\psi(g)} \Big) \cdot h\\
&=\big( \Psi_\Gamma(\phi)(x) \big) \cdot h.
\end{align*}

We now prove that $\Psi_\Gamma$ induces an isomorphism in cohomology.

\begin{lemma}
	The map~$\psi_\Gamma$ induces a $\Lambda$-isomorphism on the homology level.
\end{lemma}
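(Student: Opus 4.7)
\medbreak
\noindent\textbf{Proof plan.}
The plan is to show that $\psi_\Gamma$ is already a $\Lambda$-chain isomorphism, so the induced map on cohomology is automatically a $\Lambda$-isomorphism. This is the natural form of Shapiro's lemma in this setting: since $\Hom$ converts coproducts into products, the cohomological analogue of the Shapiro isomorphism $C_\bullet(X;M(\Lambda)) \cong C_\bullet(X^\infty;M)$ of Lemma~\ref{lem:weakshapiro} should take values in $\prod_{[g]} M \cong M(\Gamma)$ rather than in its finite-sum variant $M(\Lambda)$, and this is precisely what $\psi_\Gamma$ produces.

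First I would write down the candidate inverse $\operatorname{con} \colon C^i(X; M(\Gamma)) \to C^i(X^\infty; M)$ by $\operatorname{con}(\eta)(x) := \operatorname{const}(\eta(x))$, the $t^0$--coefficient of $\eta(x) \in M(\Gamma)$. The target lies in $C^i(X^\infty;M)$ because the right $\pi_1(X)$--action on $M(\Gamma)$ from Construction~\ref{cons:LambdaCoeff} is $(mt^k)\cdot h = (m\cdot h)\, t^{k+\psi(h)}$, which preserves the constant coefficient whenever $h \in \pi_1(X^\infty)$ since then $\psi(h)=0$; thus $\operatorname{con}(\eta)$ is right $\Z[\pi_1(X^\infty)]$--linear.

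Second, I would compute $\psi_\Gamma \circ \operatorname{con} = \id$ directly. Writing $\eta(x) = \sum_k m_k t^k$ and using $\eta(gx) = \eta(x)\cdot g^{-1}$ (from right $\Z[\pi_1(X)]$--linearity in the transpose convention $c\cdot g = g^{-1}c$), the constant coefficient of $\eta(x)\cdot g^{-1} = \sum_k (m_k \cdot g^{-1}) t^{k-\psi(g)}$ equals $m_{\psi(g)} \cdot g^{-1}$. Substituting into the defining formula for $\psi_\Gamma$ gives
\[
\psi_\Gamma(\operatorname{con}(\eta))(x) \;=\; \sum_{[g]} \big(m_{\psi(g)}\cdot g^{-1}\big) \cdot g \cdot t^{\psi(g)} \;=\; \sum_k m_k t^k \;=\; \eta(x),
\]
so $\operatorname{con}$ is a right inverse. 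For injectivity, note that the coset $[e]$ is the unique one with $\psi(g)=0$, so extracting the $t^0$--coefficient of $\psi_\Gamma(\phi)(x)$ returns $\phi(x)$; hence $\psi_\Gamma(\phi)=0$ forces $\phi=0$.

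Third, the $\Lambda$--equivariance amounts to the observation that $t$ acts on $C^\bullet(X^\infty;M)$ as a deck transformation $g'\cdot(-)$ for any lift $g'$ of $1 \in \Z$, and reindexing the sum over cosets gives $\psi_\Gamma(g' \cdot \phi) = t \cdot \psi_\Gamma(\phi)$. The expected main obstacle is simply bookkeeping: keeping left and right actions straight under the transpose convention, and making sure the sum over $[g]$ really is well defined (which follows from the fact, already used in Construction~\ref{const:LambdaCohomology}, that $g\cdot\phi$ depends only on the coset $[g] \in \pi_1(X)/\pi_1(X^\infty)$). No further analytic input is needed, in contrast to the inverse-limit arguments of Subsection~\ref{sub:Ends}.
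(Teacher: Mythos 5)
Your proof is correct, and it takes a genuinely different and more direct route than the paper.

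The paper's proof does not claim that $\psi_\Gamma$ is a chain isomorphism; instead it sets up the commutative ladder of short exact sequences relating compactly supported, $\pm$, and ordinary twisted cochains, invokes Proposition~\ref{prop:PMAcyclicCohomology} (acyclicity of $C_\pm^\bullet(X^\infty;M)$) together with flatness of $\Gamma^\pm$ to obtain the Bockstein square displayed in~\eqref{eq:BetaCPS}, cites Ranicki for the fact that the restriction $\psi_\cps$ is a chain isomorphism, and then concludes by a five-lemma-style argument. Your argument instead recognises that $M(\Gamma)\cong\operatorname{Hom}_{\Z[\pi_1(X^\infty)]}(\Z[\pi_1(X)],M)$ is the \emph{coinduced} module for the index-$\Z$ subgroup $\pi_1(X^\infty)\le\pi_1(X)$, and that $\psi_\Gamma$ is precisely the cochain-level Eckmann--Shapiro isomorphism, with explicit inverse $\operatorname{con}$ given by extracting the $t^0$-coefficient. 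Your computations of $\psi_\Gamma\circ\operatorname{con}=\id$ and $\operatorname{con}\circ\psi_\Gamma=\id$ check out under the paper's conventions for the right $\pi_1(X)$-action on $M(\Gamma)$ and the $\Lambda$-action from Construction~\ref{const:LambdaCohomology}, as does the reindexing argument for $\Lambda$-equivariance. What your approach buys: it is purely algebraic, avoids the Mittag--Leffler/inverse-limit machinery and Proposition~\ref{prop:PMAcyclicCohomology} entirely, and in particular does not need the hypothesis that $H_\bullet(X^\infty;M)$ is finite-dimensional. What the paper's approach buys: it also establishes the Bockstein commutative square~\eqref{eq:BetaCPS} en route, which is re-used in the proof of Proposition~\ref{prop:CohomologicalPairingsAgree}; with your shortcut one would still need to verify that square separately (though that verification is straightforward once $\psi_\Gamma$ and $\psi_\cps$ are known to be compatible chain isomorphisms).
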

\begin{proof}
The $\Lambda$-chain map $\psi_\Gamma$ restricts to $\Lambda$-chain maps on $C_+^\bullet(X^\infty; M)$ and $C_-^\bullet(X^\infty;M)$.
Furthermore,~\cite[Example~4.61]{Ran02} shows that $\psi_\Gamma$ restricts to a $\Lambda$-chain isomorphism $ \psi_\cps \colon C^\bullet_\cps(X^\infty;M) \xrightarrow{\sim} C^\bullet(X;M(\Lambda))$ on compactly supported cochains.
All these maps fit into the following commutative diagram of $\Lambda$-chain complexes:
\[
	\begin{tikzcd}
		0\ar[r] & C^\bullet(X; M(\Lambda)) \ar[r] & C^\bullet(X; M(\Gamma^-)) \oplus C^\bullet(X; M(\Gamma^+))  \ar[r] & C^\bullet(X; M(\Gamma))\ar[r]  & 0\\
		0\ar[r] & C^\bullet_\cps(X^\infty; M)\ar[r] \ar[u,"\psi_\cps"] & C^\bullet_-(X^\infty; M) \oplus C^\bullet_+(X^\infty; M)\ar[u,"\psi_\pm"]\ar[r] & C^\bullet(X;M)\ar[u,"\psi_\Gamma"]\ar[r] & 0.
	\end{tikzcd}
\]
Let~$\beta_{\cps}$ and $\beta_\Gamma$ be the connecting homomorphisms in cohomology.
Arguing as in Lemma~\ref{lem:UndlerlyingIsoHomology}, we use Proposition~\ref{prop:PMAcyclicCohomology} and the fact that $\Gamma^-$ and $\Gamma^+$ are flat over $\Gamma$, we obtain the following commutative diagram in homology:
\begin{equation}\label{eq:BetaCPS}
	\begin{tikzcd}
		0\ar[r] & H^{k}(X; M(\Lambda))   \ar[r,"\beta_\Gamma"] & H^{k+1}(X; M(\Gamma)) \ar[r] & 0\\
		0 \ar[r]&  H^{k}(X^\infty; M)  \ar[u,"\psi_\cps"]\ar[r, "\beta_\cps"]&  H^{k+1}(X^\infty;M)\ar[u,"\psi_\Gamma"] \ar[r]&  0.
	\end{tikzcd}
\end{equation}
As the connecting homomorphisms and $\psi_\cps$ are isomorphisms, so is $\psi_\Gamma$, as desired.
\end{proof}

We will now use the $\Lambda$-chain maps $\operatorname{dst}^{\text{lf}}$ and $\Psi_\Gamma$ to show that the geometric and homological twisted Milnor pairing are isometric.
The necessary additional input is that~$\op{dst}^\lf$ and~$\psi_\Gamma$ are compatible with locally finite Poincaré duality.
\begin{lemma}\label{lem:PsiDistPD}
The following diagram commutes:
\begin{equation}
\label{eq:PsiDistPD}
\begin{tikzcd}
	C_2(X; M(\Gamma))\ar[d, "\op{dst}^\lf"]  & \ar[swap,l, "{\cap [X]}"] C^1(X;M( \Gamma)) \\
	C_2^\lf (X^\infty; M) & \ar[swap,l, "{\cap [X^\infty]}"] C^1(X^\infty; M). \ar[u,"\psi_\Gamma"] 
\end{tikzcd}
\end{equation}
\end{lemma}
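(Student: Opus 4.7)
The strategy is a direct chain-level computation: I will choose an explicit simplicial representative of $[X]$, lift it to the standard representative of $[X^\infty]$ via equation~\eqref{eq:FundamentalCycle}, and evaluate both compositions of the square on an arbitrary cochain $\phi \in C^1(X^\infty;M)$. Commutativity then reduces to a reindexing of the sum over cosets of $\pi_1(X^\infty)$ in $\pi_1(X)$, together with a cancellation between the $\pi_1(X)$-action appearing in $\psi_\Gamma$ and the inverse action appearing in $\op{dst}^\lf$.

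Concretely, write $[X] = \sum_i 1 \otimes c_i \in C_3(X;\Z^{\triv})$ with $c_i$ lifts to $\wt X$ of the $3$-simplices of $X$, so that by~\eqref{eq:FundamentalCycle} the class $[X^\infty] \in C_3^\lf(X^\infty;\Z^{\triv})$ is represented by $\sum_{i,[g]} 1 \otimes g^{-1} c_i$, where $[g]$ ranges over $\pi_1(X)/\pi_1(X^\infty)\cong \Z$. For $\phi \in C^1(X^\infty;M)$, the bottom-then-left composition evaluates to
\[ \phi \cap [X^\infty] \;=\; \sum_{i,[g]} \phi(g^{-1}\!\lrcorner c_i) \otimes g^{-1}\!\llcorner c_i, \]
using that $\lrcorner$ and $\llcorner$ commute with the deck action. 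For the top-then-right composition, unpacking the definition of $\psi_\Gamma$ gives $\psi_\Gamma(\phi)(\lrcorner c_i) = \sum_{[g]} (g^{-1}\cdot \phi)(\lrcorner c_i) \, t^{\psi(g)}$, and applying the cap product with $[X]$ and then $\op{dst}^\lf$ yields
\[ \op{dst}^\lf\bigl(\psi_\Gamma(\phi)\cap [X]\bigr) \;=\; \sum_{i,[g]} (g^{-1}\cdot \phi)(\lrcorner c_i)\cdot g^{-1} \otimes g\!\llcorner c_i. \]
The identity $(g^{-1}\cdot \phi)(x) = \phi(gx)\,g$ from Construction~\ref{const:LambdaCohomology} turns the right-hand side into $\sum_{i,[g]} \phi(g\!\lrcorner c_i) \otimes g\!\llcorner c_i$. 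Substituting $h = g^{-1}$ in this coset sum matches the first display.

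The main obstacle is bookkeeping: I must verify that $\op{dst}^\lf$ is well defined on the relevant $\otimes_{\Z[\pi_1(X)]}$-product (so that the two representatives of the same element of $M(\Gamma)\otimes_{\Z[\pi_1(X)]} C_\bullet(\wt X)$ get sent to the same chain in $C_\bullet^\lf(X^\infty;M)$), that the series $\psi_\Gamma(\phi)(\lrcorner c_i)$ lies in $\Gamma$ (rather than only in the product of copies of $\F$), and that after applying $\op{dst}^\lf$ the resulting sum over $[g]$ is a locally finite chain on $X^\infty$. The first point follows from Lemma~\ref{lem:weakshapiro} (extended to $M(\Gamma)$); the second and third are automatic because each coset $[g]$ contributes a single term, indexed by $\psi(g)\in\Z$, and only finitely many of those terms hit any prescribed compact set of $X^\infty$. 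Once these technical points are in place, the diagram commutes on the nose on chains, hence also after passage to the quotient modules $C_2(X;M(\Gamma))$ and $C_2^\lf(X^\infty;M)$.
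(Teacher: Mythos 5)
Your proposal is correct and follows essentially the same route as the paper's proof: both are direct chain-level computations that unwind $\psi_\Gamma$ and $\op{dst}^\lf$, apply the cap-product formula, and use the identity $(g^{-1}\cdot\phi)(x)\cdot g^{-1}=\phi(gx)$ from Construction~\ref{const:LambdaCohomology} to cancel the module actions. The only cosmetic difference is your choice of coset representative $g^{-1}$ for $[X^\infty]$ (following~\eqref{eq:FundamentalCycle}), which requires a final reindexing $h=g^{-1}$, whereas the paper writes $[X^\infty]=\sum_{i,[g]}g\cdot c_i$ and avoids that step.
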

\begin{proof}
As in the discussion preceding Definition~\ref{def:PoincareLF}, we choose chain representatives~$[X] = \sum_i c_i \in C_3(X; \Z^{\op{triv}})$ and $[X^\infty] = \sum_i \sum_{[g]} g\cdot c_i$ for the fundamental classes of $X$ and $X^\infty$.
For an element $\alpha \in C^1(X^\infty; M)$, the definition of the twisted cap product yields
	\begin{align}
	\label{eq:alphacapXinfty}
		\alpha \cap [X^\infty] &= \sum_{i} \sum_{[g]} \alpha(g\cdot \lrcorner c_i) \otimes g\cdot  \llcorner c_i,
	\end{align}
 	where the sum is over the cosets $[g]$ of $\pi_1(X)/\pi_1(X^\infty)\cong \Z$.
We now compute the composition around the top of the diagram displayed in~\eqref{eq:PsiDistPD}.
In order to compute~$\psi_\Gamma(\alpha) \cap [X]$, we recall the definition of $\psi_\Gamma(\alpha)$ and use the definition of the twisted cap product:
	\begin{align*}
		\psi_\Gamma(\alpha) & = \Big( x \mapsto \sum_{[g]} (g^{-1} \cdot \alpha)(x)t^{\psi(g)} \Big),\\
		\psi_\Gamma(\alpha) \cap [X] & = \sum_{[g]} \sum_i (g^{-1} \cdot \alpha)(\lrcorner c_i )t^{\psi(g)} \otimes \llcorner c_i.
	\end{align*}
Finally, using the definition of $\op{dst}^\lf$ and the $\Lambda$-module structure of $C^\bullet(X^\infty;M)$ (recall Construction~\ref{const:LambdaCohomology}), we can describe the image of $\alpha$ under the top route as
	\begin{align*}
		\op{dst}^\lf \big( \psi_\Gamma(\alpha) \cap [X] \big) 
        &=\sum_{[g]} \sum_i \Big( (g^{-1} \cdot \alpha ) (\lrcorner c_i ) \Big) \cdot g^{-1} \otimes g \cdot \llcorner c_i\\
		&=\sum_{[g]} \sum_i \alpha(g  \cdot \lrcorner c_i) \otimes g \cdot \llcorner c_i,
	\end{align*}
	which agrees with the computation of $\alpha \cap [X^\infty]$ from~\eqref{eq:alphacapXinfty}. 
	It follows that the diagram commutes, and the lemma is proved.
\end{proof}

We can now relate the homological twisted Milnor from Definition~\ref{def:TwistedMilnorPairing} with the geometric Milnor pairing in the cover~$X^\infty$ from Definition~\ref{def:GeometricTwistedMilnor}.

\begin{proposition}\label{prop:Litherland}
	The $\Lambda$-isomorphism~$\op{dst} \colon H_1(X; M(\Lambda)) \to H_1(X^\infty; M)$ defines an isometry between the pairings~$\widetilde{\mu}_{X,M}$ and $\mu_{X,M}^\infty$.
\end{proposition}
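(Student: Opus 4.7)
The plan is to show that $\op{dst}$ is an isometry by pasting together the compatibilities already established in the excerpt into one large commutative diagram that relates the composition \eqref{eqn:MilnorPairing} defining $\widetilde{\mu}_{X,M}$ to the composition \eqref{eq:GeoMilnor} defining $\mu_{X,M}^\infty$, step by step. On the left column we shall have the maps in \eqref{eqn:MilnorPairing}, on the right column those in \eqref{eq:GeoMilnor}, and between them the vertical chain isomorphisms $\op{dst}$, $\op{dst}^\lf$ and $\psi_\Gamma^{-1}$ identifying $H_1(X;M(\Lambda))$ with $H_1(X^\infty;M)$, $H_2(X;M(\Gamma))$ with $H_2^\lf(X^\infty;M)$, and $H^1(X;M(\Gamma))$ with $H^1(X^\infty;M)$ respectively.

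The first two squares essentially already exist. Diagram~\eqref{eq:BocksteinCommute} gives $\op{dst}^\lf\circ(\beta^\Gamma)^{-1} = (\beta^\lf)^{-1}\circ\op{dst}$, handling the Bockstein squares at the start of each composition. Lemma~\ref{lem:PsiDistPD}, once passed to homology, gives $\op{dst}^\lf\circ \PD_X^{-1} = (\PD_{X^\infty}^\lf)^{-1} \circ \psi_\Gamma$, i.e.\ $\psi_\Gamma\circ \PD_X = \PD_{X^\infty}^\lf\circ\op{dst}^\lf$, which takes care of the Poincaré duality square. This reduces the proposition to verifying the final compatibility
\[ (\op{const})_*\circ\wt b_*\circ\op{ev}_{M(\Lambda)}\circ \psi_\Gamma \;=\; \op{dst}^\#\circ b_*\circ\op{quot}_*\circ\op{ev}_M, \]
where $\op{dst}^\#$ denotes pullback along $\op{dst}$ on the Hom-spaces.

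Step three is the only piece that is not already a diagram chase of prior results, and is the most delicate. I would verify it on representatives: take a cocycle $\phi\in C^1(X^\infty;M)$ and a cycle $mt^k\otimes c\in C_1(X;M(\Lambda))$. Unwinding, the left-hand side produces $\op{ev}_{M(\Lambda)}(\psi_\Gamma(\phi))(mt^k\otimes c) = \sum_{[g]} (g^{-1}\cdot\phi)(c)\,t^{\psi(g)}\otimes mt^k$, and then $\wt b$ yields $\sum_{[g]} b((g^{-1}\cdot\phi)(c),m)\,t^{\psi(g)-k}\in \Gamma$. The map $\op{const}$ extracts the coefficient $\psi(g)=k$, so the left-hand side equals $b((g^{-1}\cdot\phi)(c),m)$ for $g$ any lift with $\psi(g)=k$. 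On the right, $\op{dst}(mt^k\otimes c)=mg^{-1}\otimes gc$, evaluation against $\phi$ produces $\phi(gc)\otimes mg^{-1}$, and $b$ evaluates to $b(\phi(gc),mg^{-1})$. Using Construction~\ref{const:LambdaCohomology} we have $(g^{-1}\cdot\phi)(c)=\phi(gc)g$, and unitarity of $(M,b)$ gives $b(\phi(gc)g,m)=b(\phi(gc),mg^{-1})$, so both sides agree.

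The main obstacle will be in this last step: one has to juggle several conventions simultaneously — the diagonal right $\Z[\pi_1(X)]$-action on $M(\Gamma)\otimes_{\Z[\pi]}M(\Lambda)^\tr$ from Remark~\ref{rem:Box}, the twisting of the $\Z$-action on cochains from Construction~\ref{const:LambdaCohomology}, the formula for $\wt b$ involving the involution $t\mapsto t^{-1}$, and the unitarity identity $b(vg,wg)=b(v,w)$. The key conceptual point is that the role of $\op{const}$ on the algebraic side is precisely to single out the unique coset $[g]$ with $\psi(g)=k$ that contributes to the pairing in the cover $X^\infty$, whence the two pairings agree on representatives and the proposition follows.
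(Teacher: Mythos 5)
Your proof is correct and follows exactly the paper's strategy: split the large commutative diagram into the Bockstein square (handled by the diagram \eqref{eq:BocksteinCommute}), the Poincaré-duality square (handled by Lemma~\ref{lem:PsiDistPD}), and the evaluation square, the last of which is then checked on chain-level representatives $mt^k\otimes c$; your chain computation is essentially identical to the paper's. One small slip: Lemma~\ref{lem:PsiDistPD} gives $\op{dst}^\lf\circ(\cap[X])\circ\psi_\Gamma = \cap[X^\infty]$, which in homology reads $\op{dst}^\lf\circ\PD_X^{-1}=(\PD^\lf_{X^\infty})^{-1}\circ\psi_\Gamma^{-1}$ (equivalently $\psi_\Gamma^{-1}\circ\PD_X = \PD^\lf_{X^\infty}\circ\op{dst}^\lf$), not $(\PD^\lf_{X^\infty})^{-1}\circ\psi_\Gamma$ as you wrote; this corrected form is exactly what pastes together with your third compatibility (which you did state correctly, with $\psi_\Gamma$ on the algebraic side), so the final verification and conclusion are unaffected.
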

\begin{proof} 
Both pairings are compositions of a Bockstein homomorphism, a Poincar\'e duality isomorphism and an evaluation map.
The strategy of the proof is therefore to fit all these maps into a large commutative diagram.
For space reasons, we subdivide this large diagram into smaller diagrams.
As a first step, we consider the portions involving the Bockstein and Poincar\'e duality isomorphisms:
\[
\begin{tikzcd}
	H_1(X; M(\Lambda)) \ar[r,"\big(\beta^\Gamma\big)^{-1}"] \ar{d}{\op{dst}}[swap]{\cong} & H_2(X; M(\Gamma))\ar[r, "\PD_X"] \ar[d, "\op{dst}^\lf"] & H^1(X;M( \Gamma)) \\
	H_1(X^\infty; M) \ar[r, "\big( \beta^\lf \big)^{-1}"] & H_2^\lf (X^\infty; M) \ar[r,"\PD^\lf_{X^\infty}"] & H^1(X^\infty; M). \ar[u,"\psi_\Gamma"] 
\end{tikzcd}\]
The first square commutes by~\eqref{eq:BocksteinCommute} and the second by Lemma~\ref{lem:PsiDistPD}.
Next, we relate the segments of~$\op{Mil}$ and $\op{Mil}^\infty$ that involve evaluation maps:
\begin{equation}\label{eq:HomEv} 
\begin{tikzcd}[column sep=3.5cm]
	H^1(X; M(\Gamma) ) \ar[r, "(\op{const} \circ \wt b)_* \circ \operatorname{ev}_{M(\Gamma)}"] & \text{Hom}_{\text{right-}\F}\big(H_1(X;M( \Lambda))^\tr, \F \big) \\
	H^1(X^\infty; M) \ar[r, "b_* \circ \operatorname{ev}_{M}"] \ar[u,"\psi_\Gamma"] & \text{Hom}_{\text{right-}\F} \big( H_1(X^\infty; M)^\tr, \F\big). \ar[u, "\op{dst}^*"]
\end{tikzcd}
\end{equation}
We claim that this diagram already commutes on the chain level. 
By linearity, it is enough to check this on elementary tensors of the form $mt^k \otimes c$. 
Since $\psi$ is surjective, we may choose~$h \in \pi$ with $\psi(h) = k$. 
In the following calculations, we view the evaluation maps $\operatorname{ev}_{M(\Gamma)}$ and $\operatorname{ev}_{M}$,  (recall Definition~\ref{def:EvaluationMap}) as pairings: for instance, when $\alpha \in H^1(X;M)$, we write $\langle \alpha, c \rangle := \op{ev}_M(\alpha)(c)$.
We compute the top route of~\eqref{eq:HomEv}.
First, apply $\psi_\Gamma$ and then $\operatorname{ev}_{M(\Gamma)}$:
\begin{align*}
	\big \langle \psi_\Gamma(\alpha),m t^k \otimes c \big\rangle 
	&= \Big \langle \sum_{[g]} ( g^{-1} \cdot \alpha) t^{\psi(g)}, m t^k \otimes c \Big\rangle\\
	&= \sum_{[g]}  (g^{-1} \cdot \alpha)(c)t^{\psi(g)} \otimes mt^{k}.
\end{align*}
To obtain an element in $\F$, postcompose with $\op{const} \circ \widetilde{b}$:
\begin{align*}
	\big( \op{const} \circ \wt b\big) \big( \langle \psi_\Gamma(\alpha),mt^k  \otimes c\rangle \big)
	&= \op{const} \Big( \sum_{[g]} b\big( (g^{-1} \cdot \alpha)(c), m \big) t^{k-\psi(g)} \Big) \\
	&= b\big( (h^{-1} \cdot \alpha )(c) , m \big).
\end{align*}

Now we compare this with the result obtained from the bottom route, namely the composition~$\op{dst}^* \circ b_* \circ \op{ev}_M$.
First, for $\alpha \in H^1(X;M)$, apply the definition of $\op{dst}$ (from Lemma~\ref{lem:weakshapiro}) and then the definition of $\op{ev}_M$ to obtain
\begin{align*}
	\langle \alpha, \op{dst}( m t^k \otimes c )\rangle
	&= \langle \alpha,  m\cdot h^{-1} \otimes  h\cdot c )\rangle\\
	&= \alpha(h\cdot c) \otimes_{\Z[\pi^\infty]}  (m\cdot h^{-1}).
\end{align*}
We can now apply the pairing $b$ to this result and obtain
\begin{align*}
	b \big( \langle \alpha, \op{dst}( m t^k \otimes c )\rangle \big) 
        &= b( \alpha(h\cdot c), m h^{-1})\\
	&= b\big( \alpha(h \cdot c) \cdot h , m \big)\\
	&= b\big( (h^{-1} \cdot \alpha)(c) , m \big).
\end{align*}
In the penultimate equation, use that $b(x\cdot h, y) = b(x, h \cdot y)$ for $x,y \in M$ and $h \in \pi$.
The result of the two paths in the diagram agree, 
and so $\op{dst}$ defines an isometry from~$\widetilde{\mu}_{X,M}$ to $\mu_{X,M}^\infty$.
This concludes the proof of the proposition.
\end{proof}

\subsection{The cohomological geometric twisted Milnor pairing}
\label{sub:TwistedMilnorGeometricCohomological}
In this subsection, we define a fourth version of the twisted Milnor pairing.
This is the version of the pairing defined by Kirk and Livingston in~\cite[Section 7]{KirkLivingston}. 
\medbreak

Let $X$ be a closed $3$-manifold, and let $(M,b)$ be 
a unitary representation such that~$H_1(X^\infty; M)$ is a finite dimensional $\F$--vector space. 
Consider the $\F$--linear map obtained as the composition
\begin{align*}
	H^1(X^\infty; M) \times H^1(X^\infty;M)^\tr 
	&\xrightarrow{ \id \times ( \beta_{\cps} )^{-1} } H^1(X^\infty;M) \times H^2_{\op{cs}}(X^\infty;M)^\tr \\ 
	& \xrightarrow{\cup} H^3_{\op{cs}}(X^\infty; M \boxtimes M)\\
	& \xrightarrow{\PD_{X^\infty}^{-1}} H_0(X^\infty;M \boxtimes M)\\
	& \xrightarrow{\cong} M \otimes_{\Z[\pi_1(X^\infty)]}M^\tr\\
	& \xrightarrow{\op{quot}} M \otimes_{\Z[\pi_1(X)]}M^\tr\\
	& \xrightarrow{b} \F  \tag{GeoCoMil}\label{eqn:GeoCoMil}
\end{align*}
of the six maps defined as follows.
Firstly, $\beta^\cps$ is the connecting $\F$-homomorphism that arises from the short exact sequence 
\[ 0 \to C^\bullet(X^\infty,M) \to C_-^\bullet(X^\infty,M) \oplus C^\bullet_+(X^\infty,M) \to C_\cps^\bullet(X^\infty,M) \to 0 \]
described in Lemma~\ref{lem:PMShortExact}; this Bockstein homomorphism is invertible thanks to Proposition~\ref{prop:PMAcyclicCohomology}.
Secondly, $\cup$ is the $\F$-linear twisted cup product described in Subsection~\ref{sub:Cup}.
Thirdly, $\PD_{X^\infty}$ is the twisted Poincar\'e duality $\F$-isomorphism described in Remark~\ref{rem:PDCompactlySupported}.
Fourthly, we use the $\F$-isomorphism of Lemma~\ref{lem:Ccoinvariants}.
Fifthly, $\op{quot}$ denotes the quotient map $M \otimes_{\Z[\pi_1(X^\infty)]}M^\tr \to M \otimes_{\Z[\pi_1(X)]}M^\tr$.
Sixthly and finally,~$b$ is our fixed pairing.

\begin{definition}
\label{def:MilnorPairingCohologyInfiniteCylic}
Let $X$ be a closed $3$-manifold, and let $M$ be 
a unitary representation such that $H_1(X^\infty; M)$ is a finite dimensional $\F$-vector space.
The \emph{geometric cohomological twisted Milnor pairing}
\[ \mu_{X,M} \colon H^1(X^\infty; M) \times H^1(X^\infty; M) \to \F  \]
is the $\F$-sesquilinear pairing defined by the composition in \eqref{eqn:GeoCoMil}. 
\end{definition}

This Milnor pairing is the pairing defined by Kirk-Livingston~\cite[Section 7]{KirkLivingston} and which generalises Milnor's construction in the untwisted case~\cite{MilnorInfiniteCyclic}.

\subsection{Relating the two cohomological pairings}
\label{sub:RelatingCohomologicalPairings}
We relate the cohomological Milnor pairings of Definitions~\ref{def:TwistedMilnorPairingCohomology} and~\ref{def:MilnorPairingCohologyInfiniteCylic}.
\medbreak

We will show that the $\Lambda$-isomorphism~$\psi_\Gamma$ from~\eqref{eq:PsiGamma} is the required isometry.
In order to show that $\Psi_\Gamma$ is compatible with cup products, consider the $\F$--linear map
\begin{align*} 
	\delta_\boxtimes \colon M(\Gamma) \boxtimes M(\Lambda) &\to (M\boxtimes M)(\Lambda)\\
	m t^g \boxtimes n t^h &\mapsto 
	\begin{cases}
		(m\boxtimes n) t^h & g = h\\
		0 & g \neq h.
	\end{cases}
\end{align*}
Using $\delta_\boxtimes$, the relation between the twisted cup products is expressed as follows.

\begin{lemma}\label{lem:PsiCup}
	The following diagram commutes:
	\[ 
	\begin{tikzcd}
		C^k(X; M(\Gamma)) \times C^l(X; M(\Lambda) ) 
		\ar[r, "\cup"] 
		& C^{k+l}(X; M(\Gamma)\boxtimes M(\Lambda))
        \ar[r, "{\delta_\boxtimes}_*"] 
		&C^{k+l}(X; (M\boxtimes M)(\Lambda) ).\\
		C^k(X^\infty; M) \times C^l_{\cps}(X^\infty; M) \ar[u, "\psi_\Gamma \times \psi_{\cps}"] \ar[r, "\cup"] & C^{k+l}_{\cps}(X^\infty; M\boxtimes M) \ar[ru, "\psi_\Gamma"] 
	\end{tikzcd} \]
\end{lemma}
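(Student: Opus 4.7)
The plan is to verify commutativity on the chain level by a direct computation. Since all maps involved ($\cup$, $\psi_\Gamma$, $\psi_{\cps}$, and $\delta_\boxtimes$) are defined pointwise on cochains, it suffices to evaluate both compositions on an arbitrary pair $(\alpha,\beta) \in C^k(X^\infty; M) \times C^l_{\cps}(X^\infty; M)$ and an arbitrary chain $\sigma \in C_{k+l}(\widetilde{X})$, and check equality in $(M \boxtimes M)(\Lambda)$.

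First I would compute the upper route, $\psi_\Gamma(\alpha \cup \beta)(\sigma)$. Unwinding the definitions and using the $\Lambda$-action from Construction~\ref{const:LambdaCohomology} (namely $(g^{-1} \cdot \phi)(x) = \phi(gx) \cdot g$) together with the diagonal action on $M \boxtimes M$ from Remark~\ref{rem:Box}, this yields
\[ \psi_\Gamma(\alpha \cup \beta)(\sigma) = \sum_{[g]} \Big( \bigl( \alpha(g \cdot \lrcorner \sigma) \cdot g \bigr) \boxtimes \bigl( \beta(g \cdot \llcorner \sigma) \cdot g \bigr) \Big) \, t^{\psi(g)}, \]
where the sum is over cosets $[g] \in \pi_1(X)/\pi_1(X^\infty) \cong \Z$. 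A small verification, using normality of $\pi_1(X^\infty)$ in $\pi_1(X)$ and the $\pi_1(X^\infty)$-equivariance of $\alpha$ and $\beta$, shows that the summand only depends on the coset.

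Next I would compute the lower route $\delta_\boxtimes \bigl( \psi_\Gamma(\alpha) \cup \psi_{\cps}(\beta) \bigr)(\sigma)$. Expanding each factor by the formula for $\psi_\Gamma$ (respectively $\psi_{\cps}$, which is simply its restriction to compactly supported cochains) gives a double sum
\[ \sum_{[g],[h]} \bigl( \alpha(g \cdot \lrcorner\sigma) \cdot g \,t^{\psi(g)} \bigr) \boxtimes \bigl( \beta(h \cdot \llcorner\sigma) \cdot h \,t^{\psi(h)} \bigr). \]
The map $\delta_\boxtimes$ then kills all summands with $\psi(g) \neq \psi(h)$; since $\psi(g) = \psi(h)$ is equivalent to $[g] = [h]$ as cosets of $\ker \psi = \pi_1(X^\infty)$, only the diagonal terms survive and we recover exactly the expression obtained for the upper route.

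The main conceptual content, therefore, is the observation that $\delta_\boxtimes$ is precisely the device that ``collapses'' the double sum coming from applying $\psi_\Gamma$ on each factor of the cup product into the single sum coming from applying $\psi_\Gamma$ to the cup product itself. The only bookkeeping obstacle is keeping straight the right and left actions when computing $(g^{-1} \cdot \phi)$ and the diagonal action on $M \boxtimes M$, but once these are correctly unwound, the computation is formal and the diagram commutes already on the chain level (hence on cochains).
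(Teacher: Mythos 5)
Your proof is correct and takes essentially the same route as the paper's: you unwind the definitions of $\psi_\Gamma$, the twisted cup product, and $\delta_\boxtimes$ at the chain level, observe that applying $\psi_\Gamma$ to each factor produces a double sum over cosets, and that $\delta_\boxtimes$ precisely restricts to the diagonal $[g]=[h]$, recovering $\psi_\Gamma(\alpha \cup \beta)$. The paper's proof is this same computation written out explicitly.
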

\begin{proof}
	Let $\alpha \in C^k(X^\infty; M),\alpha' \in C^l_{\op{cs}}(X^\infty; M)$ and $c \in C_{k+l}(X^\infty; M \boxtimes M)$. 
    Using consecutively the definition of ${\delta_\boxtimes}_*$, the definition of the twisted cup product, the definition of $\Psi_\Gamma$,
the definition of $\delta_\boxtimes$ (and of the module structures), and the definition of $\Psi_\Gamma$, we obtain
	\begin{align*}
            {\delta_\boxtimes}_*(\psi_\Gamma(\alpha) \cup \psi_\cps(\alpha'))(c) 
            &= {\delta_\boxtimes} \big( \psi_\Gamma(\alpha) \cup \psi_\cps(\alpha')(c) \big)\\
            &= {\delta_\boxtimes} \big( \psi_\Gamma(\alpha)( \lrcorner c) \boxtimes \psi_\cps(\alpha')(   \llcorner c ) \big)\\
            &= {\delta_\boxtimes} \Big( \sum_{[g], [h]} (g^{-1} \cdot \alpha)(\lrcorner c)t^{\psi(g)}
			\boxtimes (h^{-1} \cdot \alpha')(\llcorner c)t^{\psi(h)}\Big)\\
		&=  \sum_{[g]}\Big( (g^{-1} \cdot \alpha)(\lrcorner c) 
			\boxtimes (g^{-1} \cdot \alpha')(\llcorner c) \Big) t^{\psi(g)}\\
		&= \psi_\Gamma (\alpha \cup \alpha') (c).
	\end{align*}
This shows the commutativity of the diagram and establishes the proposition.
\end{proof}

We can now relate the cohomological twisted Milnor pairing from Definition~\ref{def:TwistedMilnorPairingCohomology} with the geometric Milnor pairing in the cover~$X^\infty$ from Definition~\ref{def:MilnorPairingCohologyInfiniteCylic}.

\begin{proposition}
\label{prop:CohomologicalPairingsAgree}
	The $\Lambda$-isomorphism $\psi_\Gamma \colon H^1(X^\infty; M) \to H^1(X; M(\Gamma))$ given by~$\psi_\Gamma$
	induces an isometry between the pairings $\mu_{X,M}$ and $\widetilde{\mu}^\cup_{X,M}$.
\end{proposition}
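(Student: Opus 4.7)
The plan is to assemble a large commutative diagram in which the composition~\eqref{eqn:GeoCoMil} defining $\mu_{X,M}$ sits in parallel with the composition~\eqref{eqn:algmilcohom} defining $\widetilde{\mu}^\cup_{X,M}$, with the vertical comparison arrows furnished by $\psi_\Gamma$ and $\psi_\cps$. Both pairings have the same shape---a Bockstein, a twisted Poincaré duality, a cup product, an identification of $H_0$ with coinvariants (Lemma~\ref{lem:Ccoinvariants}), and a final evaluation via the pairing $b$ (or $\wt b$ followed by $\op{const}$)---so the work reduces to checking commutativity of each intermediate square, each of which corresponds to one of these building blocks.

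Compatibility of the Bockstein step follows from diagram~\eqref{eq:BetaCPS}, which was already used to establish that $\psi_\Gamma$ is an isomorphism: it shows that $\beta_\Gamma^{-1}$ on the $M(\Lambda)$-coefficient side corresponds to $\beta_\cps^{-1}$ on the $X^\infty$-side under $\psi_\Gamma$ and $\psi_\cps$. Compatibility of the cup product step is the content of Lemma~\ref{lem:PsiCup}: applying $\cup$ first and then $\psi_\Gamma$ agrees with $\delta_{\boxtimes,*} \circ \cup \circ (\psi_\Gamma \times \psi_\cps)$. The map $\delta_\boxtimes$ there produced transports $M(\Gamma) \boxtimes M(\Lambda)$ into $(M \boxtimes M)(\Lambda)$ and will be reused when identifying coinvariants in the step below.

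The main new ingredient, and the step I expect to be the principal obstacle, is compatibility of Poincaré duality: one needs a cohomological analogue of Lemma~\ref{lem:PsiDistPD} saying that, on the chain level in the appropriate colimit,
\[ \psi_\Gamma\bigl( \alpha \cap [X^\infty\,|\,\Int Y_t] \bigr) = \psi_\Gamma(\alpha) \cap [X], \]
where $[X^\infty\,|\,\Int Y_t]$ is the family of compactly supported fundamental classes used to define $\PD_{X^\infty,\cps}$ in Remark~\ref{rem:PDCompactlySupported}. The verification will mirror the coset-summation argument of Lemma~\ref{lem:PsiDistPD}, but now one must track how $\psi_\Gamma$ interacts with the directed colimit defining $C_\cps^\bullet(X^\infty;M)$ and how the fundamental class $[X]$ is recovered as the pushforward of $[X^\infty\,|\,\Int Y_t]$ through the covering map restricted to $Y_t$. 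This is where essentially new content is required.

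The final identification-and-evaluation step is then a routine algebraic check on elementary tensors, parallel to the one performed in the proof of Proposition~\ref{prop:Litherland}. Writing representatives in $M \otimes_{\mathbb{Z}[\pi_1(X^\infty)]} M^\tr$ and in $M(\Gamma) \otimes_{\mathbb{Z}[\pi_1(X)]} M(\Lambda)^\tr$, one verifies that $b \circ \op{quot}$ agrees with $\op{const} \circ \wt b \circ \delta_\boxtimes$: the coset sum encoded in $\psi_\Gamma$ produces a power series whose constant term, extracted by $\op{const}$ after $\delta_\boxtimes$ selects matching $t$-degrees, is exactly the contribution of the trivial coset---precisely the image under $\op{quot}$. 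Once the four squares commute, the full diagram gives the claimed isometry $\psi_\Gamma^{\#} \colon \mu_{X,M} \xrightarrow{\sim} \widetilde{\mu}^\cup_{X,M}$.
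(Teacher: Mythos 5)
Your proposal follows essentially the same route as the paper: stack the defining compositions of $\mu_{X,M}$ and $\widetilde{\mu}^\cup_{X,M}$ and check commutativity square by square, using~\eqref{eq:BetaCPS} for the Bockstein step, Lemma~\ref{lem:PsiCup} for the cup product step, a Poincar\'e-duality compatibility, and a final evaluation identity; and you correctly single out the compatibility between $\PD_{X^\infty,\op{cs}}$ and $\PD_X$ under the Shapiro maps as the step needing genuine new work (the paper asserts that diagram without a chain-level verification). Two formulas in your sketch need repair: the expression $\psi_\Gamma\bigl(\alpha\cap[X^\infty\,|\,\Int Y_t]\bigr)$ does not type-check, since $\alpha\cap[X^\infty\,|\,\Int Y_t]$ is a chain and $\psi_\Gamma$ acts on cochains, so one should instead verify $\alpha\cap[X^\infty\,|\,\Int Y_t]=\op{dst}\bigl(\psi_\cps(\alpha)\cap[X]\bigr)$; and the closing identity should be $\op{const}\circ\wt b=b\circ\op{aug}\circ\delta_\boxtimes$ (on $M(\Gamma)\boxtimes M(\Lambda)$), in combination with Lemma~\ref{lem:PDCoeff} to commute $\delta_\boxtimes$ past $\cap[X]$, rather than the composite $\op{const}\circ\wt b\circ\delta_\boxtimes$ you wrote, which does not compose. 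Neither issue is structural, and once corrected your outline matches the paper's argument.
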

\begin{proof}
Both pairings are defined by first composing a Bockstein homomorphism and a cup product pairing.
As a consequence, we first consider the following diagram which commutes by the diagram displayed in~\eqref{eq:BetaCPS}:
	\[ \begin{tikzcd}
			H^1(X^\infty; M) \times H^1(X^\infty; M) \ar[r, "\id \times (\beta_\cps)^{-1}"] \ar[d, "\psi_\Gamma \times \psi_\Gamma"]
			& H^1(X^\infty; M) \times H^2_\cps(X^\infty; M)
			\ar[d, "\psi_\Gamma \times \psi_\cps"]\\
			H^1(X; M(\Gamma)) \times H^1(X; M(\Gamma)) \ar[r, "\id \times (\beta_\Gamma)^{-1}"] 
			& H^1(X; M(\Gamma)) \times H^2(X; M(\Lambda)).
	\end{tikzcd}\]
Next, Lemma~\ref{lem:PsiCup} shows that the following diagram commutes:
	\[ \begin{tikzcd}
			H^1(X^\infty; M) \times H^2_\cps(X^\infty; M) \ar[r, "\cup"] \ar[d, "\psi_\Gamma \times \psi_\cps"]
			& H^3_\cps(X^\infty; M\boxtimes M)
			\ar[d, "\psi_\Lambda"]\\
            H^1(X; M(\Gamma)) \times H^2(X; M(\Lambda)) \ar[r, "{\delta_\boxtimes} \,\circ\, \cup"] 
			& H^3(X; (M\boxtimes M)(\Lambda)).
	\end{tikzcd}\] 
The two pairings are now obtained by taking Poincar\'e duality and identifying the $0$-th homology terms appropriately.
To make this precise, we note that the augmentation map $\Lambda \to \F$ induces an $\F$-linear map $\op{aug} \colon (M\boxtimes M)(\Lambda)  \to M\boxtimes M$  which fits into the following commutative diagram:
\[ \begin{tikzcd}
		H^3_\cps(X^\infty; M\boxtimes M) \ar[r, "\PD_{X^\infty}"] \ar[d, "\psi_\Lambda"]
			& H_0(X^\infty; M\boxtimes M) \ar[r]& M\otimes_{\Z[\pi_1(X)]} M^\tr\\
		H^3(X; (M\boxtimes M)(\Lambda)) \ar[r, "\PD_X"] & H_0( X; (M\boxtimes M)(\Lambda) ) \ar[u, "\op{dst}"] \ar[r,"\op{aug}_*"] & H_0(X; (M\boxtimes M)). \ar[u, "\cong"]
			&  
	\end{tikzcd}\]
If we combine each of the three previous diagrams, then we obtain a large commutative whose top row is (the adjoint of) $\mu_{X,M}$ and  whose bottom row is the adjoint of the following pairing:
	\begin{align*}
		H^1(X; M(\Gamma)) \times H^1(X; M(\Gamma)) 
		&\xrightarrow{\id \times (\beta_\Gamma)^{-1} }
			H^1(X; M(\Gamma)) \times H^2(X; M(\Lambda))\\
		&\xrightarrow{\cup} H^3(X; M(\Gamma)\boxtimes M(\Lambda))\\
        &\xrightarrow{ {\delta_\boxtimes} } H^3(X; (M\boxtimes M)(\Lambda))\\
		&\xrightarrow{ \op{PD}_X^{-1} } H_0(X; (M\boxtimes M)(\Lambda))\\
		&\xrightarrow{ \op{aug}_* } H_0(X; M\boxtimes M)\\
		&\xrightarrow{ \cong } M\otimes_{\Z[\pi_1(X)]} M^\tr.
	\end{align*}
In order to conclude the proof of the proposition, it only remains to show that this pairing agrees with $\widetilde{\mu}_{X,M}^\cup$.
By Lemma~\ref{lem:PDCoeff}, Poincaré duality commutes with change of coefficients, it only remains to notice that	$\op{const}\circ \wt b = b \circ \op{aug} \circ {\delta_\boxtimes}$.
This concludes the proof of the proposition.
\end{proof}

Theorem~\ref{thm:PairingsAgree} from the introduction now follows by combining Proposition~\ref{prop:CohomologicalPairingsAgree}  with Proposition~\ref{prop:IsometryTwistedPairings}.
Indeed these propositions show that the pairing $\mu_{X,M}$ defined by Kirk and Livingston and the pairing $\widetilde{\mu}_{X,M}$ from Definition~\ref{def:TwistedMilnorPairing} agree.
In particular, from now on, we use $\mu_{X,M}$ to denote any version of the twisted Milnor pairing.

\section{The Milnor pairing for fibered manifolds}
\label{sec:Fibered}

In this section, we prove Theorem~\ref{thm:MilnorIntersectionMonodromy}.
This result describes the twisted Milnor pairing for fibered manifolds, and its proof makes use of the homological definition of the twisted Milnor pairing from Definition~\ref{def:GeometricTwistedMilnor}.
In Subsection~\ref{sub:Cross}, we discuss the twisted cross product.
In Subsection~\ref{sub:Fibered}, we describe the twisted Milnor pairing for fibered $3$-manifolds.

\subsection{The twisted cross product}
\label{sub:Cross}
In this subsection, we describe twisted cross products on homology and cohomology.
We fix a commutative ring $R$, topological spaces $X$ and $Y$, and universal covers~$\wt X$ of $X$, $\wt Y$ of $Y$, and $\wt X \times \wt Y$ of $X \times Y$.
\medbreak
Given two singular simplices~$\sigma_X \colon \Delta^k \to \wt X$ and $\sigma_Y \colon \Delta^l \to \wt Y$, one can form a new chain 
$\sigma_X \times \sigma_Y \colon \Delta^k \times \Delta^l \to \wt X \times \wt Y$ as in~\cite[Theorem 16.1]{Bredon93}.
While the proof of this result involves acyclic models, the intuition behind the cross product is as follows: while the product~$\Delta^k \times \Delta^l$ is not itself a simplex, it can be decomposed into $(k+l)$--dimensional simplices, whose sum gives the chain~$\sigma_X \times \sigma_Y$ of the cross product. 
Since the cross product is natural~\cite[Theorem~16.1~(2)]{Bredon93}, 
we obtain a~$(\Z[\pi_1(X)], \Z[\pi_1(Y)])$-linear map 
\[ \times \colon C(\widetilde{X},\Z[\pi_1(X)]) \times C(\widetilde{Y},\Z[\pi_1(Y)]) \to C(\widetilde{X} \times \widetilde{Y} \ ,\Z[\pi_1(X \times Y)]). \]
Fix an $(R,\Z[\pi_1(X)])$--bimodule $A$ and an $(R,\Z[\pi_1(Y)])$--bimodule $B$.
After tensoring with the modules~$A$ and $B$, one obtains well defined $(R,R)$-linear map
\begin{align*}
	\times \colon C_k(X; A) \times C_l(Y; B)^\tr &\to C_{k+l}(X\times Y; A \boxtimes B)\\
	\Big( a \otimes_{\Z[\pi_1(X)]} \wt \sigma_X, b \otimes_{\Z[\pi_1(Y)]} \wt \sigma_Y \Big) &\mapsto \big(a \boxtimes b \big) \otimes_{\Z[\pi_1(X\times Y)]} \widetilde{\sigma}_X \times \widetilde{\sigma}_Y.
\end{align*}
A similar procedure can be carried out in cohomology.
For any spaces $X,Y$ and any cochains $\alpha,\beta \in C^k(X), C^\ell(Y)$, a cross product $\alpha \times \beta \in C^{k+l}(X \times Y)$ is defined in~\cite[Chapter VI, Section 3]{Bredon93}.
This cross product construction is natural, which can be deduced from the relation between the cup and cross products~\cite[p.~327]{Bredon93},
and therefore induces a $(\Z[\pi_1(X)],\Z[\pi_1(Y)])$-linear map on the chain complexes of the universal covers.
As in homology, one then obtains maps on the twisted cochain complexes.
These pairings  descend to (co)homology.

\begin{definition} \label{def:TwistedCross}
Let $X,Y$ be spaces, let $A$ be a $(R,\Z[\pi(X)])$-bimodule, and let let~$B$ be a $(R,\Z[\pi(Y)])$-bimodule. 
The cross product construction induces the~$(R,R)$-linear \emph{twisted cross products}
\begin{align*}
&\times \colon H^i(X;A) \otimes_\Z H^j(Y;B)^\tr \to H^{i+j}(X \times Y;A \boxtimes B), \\
&\times \colon H^i(X;A) \otimes_\Z H_k(Y;B)^\tr \to H_{k-i}(X \times Y ;A \boxtimes B).
\end{align*}
\end{definition}

We record some properties of the cross product in Lemma~\ref{lem:CrossProduct} below. 
The element~$1_Y \in C^0(Y;\Z)$ is the augmentation cocycle, defined by the property~$1_Y\big(\wt \sigma\big) =~1$ for every singular simplex~$\wt \sigma$.
Also, recall that we can identify~$A \boxtimes \Z = A$.

\begin{lemma} \label{lem:CrossProduct}
	Let $X,Y$ be two topological spaces. Let $A$ be a $\Z[\pi_1(X)]$--module, and $B$ a $\Z[\pi_1(Y)]$--module. Let $p_X \colon \wt X \times \wt Y \to \wt X$ denote the projection, which is~$\pi_1(X) \times \pi_1(Y)$--equivariant. For $\alpha \in H^*(X;A)$ and $\beta \in H^*(Y; B)$ and $a,a' \in H_*(X; A)$ and $b \in H_*(Y;B)$, we have 
\begin{enumerate}
	\item $p_X^*(\alpha)=\alpha \times 1_Y
		\in H^*(X\times Y; A\boxtimes \Z)\cong H^*(X\times Y; A)$,
\item $(\alpha \times \beta) \cap (a \times b)=(-1)^{\operatorname{deg}(\beta)\operatorname{deg}(a)} (\alpha \cap a) \times (\beta \cap b)\in H^*(X\times Y; A\boxtimes B)$,
\item $p_X^*(a') \cap (a \times b)=(a' \cap a) \times b \in H^*(X\times Y; A\boxtimes B)$.
\end{enumerate}
\end{lemma}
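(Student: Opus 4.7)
The plan is to deduce each of the three identities from its classical untwisted analogue at the chain level on the universal cover $\widetilde{X} \times \widetilde{Y}$, and then to verify that the diagonal bimodule structure $\boxtimes$ is compatible with these identities at every step.

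First I would prove item~(1). Since the projection $p_X \colon \widetilde{X} \times \widetilde{Y} \to \widetilde{X}$ is $\pi_1(X) \times \pi_1(Y)$--equivariant (with $\pi_1(Y)$ acting trivially on $\widetilde{X}$), the pullback $p_X^*(\alpha)$ defines a twisted cochain with coefficients in $A \boxtimes \Z \cong A$. Unwinding the chain-level description of the cross product, applying $\alpha \times 1_Y$ to a product simplex $\widetilde{\sigma}_X \times \widetilde{\sigma}_Y$ returns $\alpha(\widetilde{\sigma}_X) \boxtimes 1 = \alpha(\widetilde{\sigma}_X)$, which is exactly the value of $p_X^*(\alpha)$ on the same chain. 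The identification $A \boxtimes \Z \cong A$ is compatible with this, as the diagonal $\pi_1(X) \times \pi_1(Y)$-action on $A \boxtimes \Z$ reduces to the $\pi_1(X) \times \pi_1(Y)$-action on $A$ factoring through $p_X$.

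Next I would establish item~(2). On the universal cover with $\Z$-coefficients, the identity
\[ (\alpha \times \beta) \cap (a \times b) = (-1)^{\deg(\beta)\deg(a)} (\alpha \cap a) \times (\beta \cap b) \]
is classical and follows from the front-face/back-face descriptions of the cross and cap products recalled in Subsection~\ref{sub:Cup} (see also the analogous untwisted identity in~\cite[Chapter VI]{Bredon93}). After tensoring with the bimodules $A$ and $B$, both sides produce the same element of $C_\bullet(X \times Y; A \boxtimes B)$: the $\boxtimes$-factors arising from $\alpha \cap a$ and from $\beta \cap b$ assemble diagonally in precisely the same pattern as the $\boxtimes$-factor produced by $(\alpha \times \beta) \cap (a \times b)$.

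Finally, item~(3) follows by combining the first two items: $p_X^*(a') \cap (a \times b) = (a' \times 1_Y) \cap (a \times b) = (-1)^{0 \cdot \deg(a)}(a' \cap a) \times (1_Y \cap b) = (a' \cap a) \times b$, using that capping with the augmentation cocycle $1_Y$ is the identity on $H_\bullet(Y;B)$. The main technical obstacle will be the bookkeeping in item~(2): verifying that the diagonal $\pi_1(X) \times \pi_1(Y)$--action on $A \boxtimes B$ is preserved throughout, and that the Koszul signs from the Eilenberg--Zilber interchange assemble as stated. Once that is handled carefully, the three formulas reduce to their classical untwisted versions.
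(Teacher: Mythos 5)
Your proposal is correct and takes essentially the same approach as the paper: both arguments reduce to the classical untwisted cross/cap identities on the chain level of the universal covers (with the paper citing Bredon p.~324 for item~(1) and Bredon Theorem~5.4 for item~(2)), and both derive item~(3) formally from the first two by setting $\beta = 1_Y$ and using that capping with the augmentation cocycle is the identity. The paper's proof is merely more terse; the underlying logic is identical.
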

\begin{proof}
Apply the proof of the untwisted case to the singular simplices in the universal covers. 
For a proof of the first assertion; see~\cite[p.~324]{Bredon93}. The proof of the second assertion 
can also be found in Bredon~\cite[Theorem~5.4]{Bredon93}. The third assertion is a consequence of the first two.
\end{proof}

\subsection{Fibered $3$--manifolds}\label{sub:Fibered}
In this subsection, we describe the twisted Milnor pairing for fibered $3$-manifolds. 
The result is expressed in terms of the twisted intersection form of the fiber and the effect of the monodromy on twisted homology.
\medbreak

First, we recall some facts about fibered $3$-manifolds.
For a surface~$\Sigma$ and a diffeomorphism~$\varphi \colon \Sigma \to \Sigma$ that fixes a basepoint~$x_0 \in \Sigma$, construct the mapping~cylinder
\[ X = \Sigma \times \R / ( \varphi^{k} (z), t) \sim (z, t+k). \] 
The $3$-manifold~$X$ is called \emph{fibered} with \emph{fiber}~$\Sigma$ and \emph{monodromy}~$\varphi$.
The infinite cyclic cover of $X$ with respect to the projection induced map $\pi_1(X) \to \pi_1(S^1) = \Z$ is diffeomorphic to $\Sigma \times \R$.
The deck transformation group of this cover is~$\Z$ and given $(z, t) \in \Sigma \times \R$, an integer $k \in \Z$ acts by the formula
$k \cdot (z, t) = (\varphi^{-k}(z), t+k)$.
Recall that the fundamental group of $X$ has the presentation
\[ \pi_1(X) = \big \langle \pi_1(\Sigma), t \ \vert \ tgt^{-1} = (\varphi^{-1})_*(g), \quad  g \in \pi_1(\Sigma) \big \rangle, \]
where $t$ denotes the path $\gamma(s) = (x_0, s)$.
We now fix some notation.

\begin{notation}
\label{not:SetUpFibered}
Fix a fibered $3$-manifold $X$ with fiber surface~$\Sigma$, monodromy~$\varphi$ and projection map $f \colon X \to S^1$.
Let $M$ be 
a unitary representation such that~$H_1(X^\infty;M)$ is~$\Lambda$-torsion.
\end{notation}

The goal of the next construction is to endow $H_1(\Sigma;M)$ with the structure of a $\Lambda$-module and to describe the automorphism induced by $\varphi$ on $H_1(\Sigma;M)$.

\begin{construction}\label{const:FibredHomology}
Since $\pi_1(X^\infty) = \pi_1(\Sigma)$, we can also consider the restriction of $M$ to $\pi_1(X^\infty)$ as a $\pi_1(\Sigma)$--module. 
Fix a universal cover~$\wt \Sigma$ for $\Sigma$, and observe that the manifold~$\wt \Sigma \times \R$ is a universal cover for~$X$.
The  inclusion~$\iota \colon \wt \Sigma \to \wt X$ by $x \mapsto x \times \{0\}$ is $\pi_1(X^\infty)$--equivariant, and consequently this map induces $\F$-isomorphisms
\begin{equation} \label{eq:SigmaIso} 
	H_k(\Sigma; M) \xrightarrow{\sim} H_k(\Sigma \subset X^\infty; M) \xrightarrow{\sim}
	H_k(X^\infty; M).
\end{equation}
For the last isomorphism, note that $\wt \Sigma \times \R$ deformation retracts to $\wt \Sigma$ in a $\pi_1(X^\infty)$--equivariant way.
Note that the projection~$\pi_\Sigma \colon \wt \Sigma \times \R \to \wt \Sigma$ is $\pi_1(\Sigma)$--equivariant, and a retraction for the inclusion $\iota \colon \wt \Sigma \to \wt X$.

The $\F$-vector space~$H_k(X^\infty;M)$ is a $\Lambda$--module.
We now equip $H_k(\Sigma;M)$ with the structure of a $\Lambda$--module via an automorphism $\varphi_M^{-1}$ such that the composition in \eqref{eq:SigmaIso} is an isomorphism of $\Lambda$--modules. 
The automorphism~$\varphi_M$ will play a key role in Theorem~\ref{thm:MilnorIntersectionMonodromy}, which expresses the Milnor pairing on $X^\infty$ in terms of $\Sigma$.

Let $\wt \varphi \colon \wt \Sigma \to \wt \Sigma$ be the unique lift that preserves the basepoint $\wt x_0 \in \wt \Sigma$. 
The aforementioned map $\varphi_M$ is defined by
\begin{align*}
 \varphi_M^{-1} \colon &H_k(\Sigma; M) \to H_k(\Sigma; M) \\
        &m \otimes x \mapsto m \cdot t^{-1} \otimes \big (\wt \varphi \big)^{-1} (x). 
\end{align*}
\end{construction}

The next remark uses the cross product to relate the twisted $1$-chains of $\Sigma$ to the locally finite twisted $2$-chains of $X^\infty$.

\begin{remark}\label{rem:CrossProduct}
A locally finite fundamental class $[\R]\in C_1^{\operatorname{lf}}(\R)$ for~$\R$ is obtained by setting~$[\R]:= \sum_{k \in \Z} [k, k+1]$.
Taking the cross product with $[\R]$ gives rise to an~$\F$-linear map
	\[ \times [\R] \colon C_1(\Sigma;M) \to C_2^{\operatorname{lf}}(X^\infty;M).\] 
	Indeed, given $a < b$ and a simplex~$\wt \sigma \in C_1(\wt \Sigma)$, only finitely summands of the infinite sum~$\wt \sigma \times [\R]$ have their image intersect~$\Sigma \times [a,b]$; recall Definition~\ref{def:EndHomology}.
	Similarly, the locally finite classes $[(-\infty,0)],[(0,\infty)]\in C_1^{\operatorname{lf}}(\R)$ induces $\F$-linear maps
	\begin{align*}
& \times [(-\infty,0)] \colon C_1(\Sigma;M) \to C_2^{-}(X^\infty;M), \\
& \times [(0,\infty)] \colon C_1(\Sigma;M) \to C_2^{+}(X^\infty;M).
	\end{align*}
\end{remark}

The adjoint of the Milnor pairing of Definition~\ref{def:GeometricTwistedMilnor} is obtained as the composition of the inverse of a Bockstein homomorphism with Poincar\'e duality and an evaluation map.
Here recall that the aforementioned inverse Bockstein
\[ \beta^{-1} \colon H_1(X^\infty;M) \to H_2^{\operatorname{lf}}(X^\infty;M)\]
arises from the short exact sequence in~\eqref{eq:ShortHomology}.
The next lemma relates the inclusion induced isomorphism $\iota \colon H_1(\Sigma;M) \to H_1(X^\infty;M)$ to 
$\beta$ and to the cross product~$\times [\R].$ 

\begin{lemma}
\label{lem:MilnorMapCross}
The composition $ \beta^{-1} \circ \iota$ agrees with the map $\times [\R]$.
\end{lemma}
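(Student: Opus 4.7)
The approach is to unpack the definition of the connecting homomorphism $\beta$ from Lemma~\ref{lem:PMShortExact} and verify the identity by an explicit chain-level computation. Since $\iota$ is an isomorphism, it suffices to show that $\beta([\alpha \times [\R]]) = \iota([\alpha])$ for every cycle $\alpha \in Z_1(\Sigma; M)$.

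Recall that $\beta$ is computed by the usual zig-zag: given a locally finite $2$-cycle $z$, pick a lift $(a^-, a^+) \in C_2^-(X^\infty;M) \oplus C_2^+(X^\infty;M)$, then $\partial a^-$ and $\partial a^+$ must agree (up to the sign coming from the Mayer--Vietoris convention) as a chain in the subcomplex $C_1(X^\infty; M)$, and this common boundary represents $\beta([z])$. The natural lift in our case is dictated by the splitting of the fundamental class $[\R] = [(-\infty, 0)] + [(0, \infty)]$ into the two half-rays: set $a^+ := \alpha \times [(0, \infty)]$ and $a^- := \alpha \times [(-\infty, 0)]$, which by Remark~\ref{rem:CrossProduct} indeed lie in $C_2^+$ and $C_2^-$, respectively, and whose sum recovers $z$ in $C_2^\lf(X^\infty;M)$.

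The final step is the Leibniz rule for the cross product. Since $\partial \alpha = 0$, the boundary of $a^\pm$ comes only from the boundary of the half-ray factor; a telescoping computation gives $\partial [(0, \infty)] = -\{0\}$ and $\partial [(-\infty, 0)] = \{0\}$ in the respective complexes, so both $\partial a^+$ and $\partial a^-$ are, up to sign, the chain $\alpha \times \{0\}$. Since the embedding $\iota \colon \Sigma \hookrightarrow X^\infty$ sends $\alpha$ to precisely $\alpha \times \{0\} \in C_1(X^\infty;M)$, with the signs arranged to match the sequence \eqref{eq:ShortHomology}, the common boundary is exactly $\iota(\alpha)$, completing the argument.

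The main subtlety will be bookkeeping the signs from three separate sources: the Mayer--Vietoris convention in Lemma~\ref{lem:PMShortExact}, the graded Leibniz rule, and the orientations of the two half-rays. None of these pose a genuine obstacle, and conceptually the lemma is transparent: the Bockstein of the ``two-ends'' sequence \eqref{eq:ShortHomology} inverts the cross product with $\R$, sending a cycle $\alpha \times [\R]$ back to the fiber cycle $\alpha \times \{0\}$.
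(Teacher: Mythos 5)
Your proposal is correct and follows essentially the same approach as the paper: both decompose $[\R]$ into the two half-rays, observe that the resulting cross products with $\alpha$ lie in $C_2^-$ and $C_2^+$ respectively, and then apply the definition of the connecting homomorphism to see that the common boundary is $\iota(\alpha)$. You unpack the zig-zag and the Leibniz rule somewhat more explicitly than the paper, but the underlying idea and the key decomposition of $[\R]$ are identical.
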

\begin{proof}
Let $[x] \in H_1(\Sigma;M)$. 
The chain $\iota(x)$ bounds the chains $\iota(x) \times [0,\infty) \in C_2^+(X;M)$ and $\iota(x) \times (-\infty,0] \in C_2^-(X;M)$.
 The definition of $\beta^{-1}$ (as a connecting homomorphism) and the fact that $\iota(x) \times [\R] = \iota(x) \times [0,\infty) + \iota(x) \times (-\infty,0]$ now imply that~$\beta^{-1}(\iota(x))=x \times [\R]$.
\end{proof}

Next, we relate locally finite Poincar\'e duality on $X^\infty$ with Poincar\'e duality on~$\Sigma$. 
\begin{lemma}
\label{lem:CrossProductMagic}
If $\pi_\Sigma \colon \wt \Sigma \times \R \to \wt \Sigma$ denotes the canonical projection, then one has
\[ \PD^{\text{lf}}_{X^\infty;M}([a] \times [\R])= (\pi_\Sigma)^*\operatorname{PD}_{\Sigma}([a]). \]
for all $[a] \in H_1(\Sigma;M)$.
\end{lemma}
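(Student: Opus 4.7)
The plan is to check the equivalent identity
\[ \pi_\Sigma^*(\PD_\Sigma([a])) \cap [X^\infty] = [a] \times [\R], \]
obtained by applying the inverse isomorphism $\cap [X^\infty]$ to both sides of the claimed equality; see Definition~\ref{def:PoincareLF} and the discussion preceding it. Once this chain-level identity is established, uniqueness of locally finite Poincar\'e duality gives the lemma.

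The first step is to identify the locally finite fundamental class of $X^\infty \cong \Sigma \times \R$ with a cross product of fundamental classes, namely $[X^\infty] = [\Sigma] \times [\R]$, where $[\R] = \sum_{k \in \Z}[k,k+1]$ is the locally finite fundamental class appearing in Remark~\ref{rem:CrossProduct}. This unpacks directly from the formula~\eqref{eq:FundamentalCycle} using the product cell structure upstairs in $\wt\Sigma \times \R$ and the fact that the deck translations in the $\R$-direction are precisely what produces the sum $\sum_{k \in \Z}[k,k+1]$.

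The second step is to invoke the third assertion of Lemma~\ref{lem:CrossProduct} with $\alpha = \PD_\Sigma([a])$, $c = [\Sigma]$, and $d = [\R]$, giving
\[ \pi_\Sigma^*(\PD_\Sigma([a])) \cap ([\Sigma] \times [\R]) = (\PD_\Sigma([a]) \cap [\Sigma]) \times [\R] = [a] \times [\R], \]
which is exactly the required identity.

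The main obstacle is that Lemma~\ref{lem:CrossProduct}(3) as written concerns ordinary (co)homology, whereas here both $[\R]$ and $[X^\infty]$ are locally finite classes. However, the projection formula $\pi_X^*(\alpha) \cap (c \times d) = (\alpha \cap c) \times d$ is really a chain-level statement, and the cross product of a finite chain with a locally finite chain is again locally finite, so the formula passes verbatim to this mixed setting after working upstairs in the universal cover and restricting attention to $\pi_1(\Sigma)$-equivariant chains. Checking this compatibility — together with the identification $[X^\infty] = [\Sigma] \times [\R]$ in $H_3^{\lf}(X^\infty;\Z^{\triv})$ — is the only technical input required; both are essentially bookkeeping once the product structure is in place.
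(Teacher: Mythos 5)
Your overall strategy matches the paper's: reduce to the cap-product identity $\pi_\Sigma^*(\PD_\Sigma([a])) \cap [X^\infty] = [a] \times [\R]$, identify $[X^\infty]$ with $[\Sigma] \times [\R]$, then apply Lemma~\ref{lem:CrossProduct}(3). The point you flag about Lemma~\ref{lem:CrossProduct}(3) being stated for ordinary (co)homology but needed here with a locally finite factor is a real one, and your resolution (it is a chain-level statement and the cross product of a finite chain with a locally finite one is locally finite) is exactly right; the paper elides this.

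Where your argument is glib, however, is the sub-claim $[X^\infty] = [\Sigma] \times [\R]$. You propose to unpack formula~\eqref{eq:FundamentalCycle} directly, reasoning that ``the deck translations in the $\R$-direction are precisely what produces the sum $\sum_{k \in \Z}[k,k+1]$.'' But the deck transformations of $X^\infty = \Sigma \times \R$ over $X$ are $(z,t) \mapsto (\varphi^{-k}(z), t+k)$: they also twist the $\Sigma$-factor by powers of the monodromy. So translating a fundamental chain of ``$\Sigma \times [0,1]$'' around the $\Z$-orbit does \emph{not} literally give $[\Sigma] \times \sum_k [k,k+1]$ unless the chosen $2$-chain $[\Sigma]$ happens to be $\varphi$-invariant on the nose. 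The identity you want is only true in $H_3^{\lf}(X^\infty; \Z^{\triv})$, not at the chain level. The paper sidesteps this entirely by instead verifying that $[\Sigma] \times [\R]$ satisfies the defining local property of a locally finite fundamental class (its image generates $H_3(X^\infty, X^\infty \setminus \{x\}; \Z^{\triv})$ for every $x$), which characterizes the class uniquely and requires no comparison of chain representatives. I'd recommend you adopt that argument for the sub-claim; the rest of your proof then goes through as written.
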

\begin{proof}
Recall that the inverse of (locally finite) Poincar\'e duality is obtained by capping with the (locally finite) fundamental class.
The statement of the lemma therefore reduces to showing that 
\[a \times [\R] = (\pi_\Sigma)^*\operatorname{PD}_{\Sigma}(a)  \cap [X^\infty].\]
\begin{claim}
\label{claim:FundamentalClassXInfinity}
The class~$[\Sigma] \times [\R]$ is a locally finite fundamental class for~$X^\infty$.
\end{claim}
The fundamental class~$[X^\infty] \in C^{\text{lf}}(X^\infty; \Z^{\text{triv}})$ is characterised by the property that for every $x \in X^\infty$, the homology of class $[X^\infty]$ is sent to a generator of~$H_3(X^\infty,X^\infty \setminus \lbrace x \rbrace; \Z^{\text{triv}})$.
Let $F \colon X^\infty \to \R$ denote the map lifting our fixed map~$X \to S^1$, and suppose that $x \in F^{-1}(]-\infty,a] \cup[b,\infty[)=:L_a \cup R_b$ for some~$a,b \in \R$.
Observe that we have an isomorphism
\begin{align*} 
H_3^{\text{lf}}(X^\infty, X^\infty \setminus \lbrace x \rbrace  ; \Z)
& \cong H_3(\Sigma \times [a,b], \Sigma \times [a,b] \setminus \lbrace x \rbrace  ; \Z) \\
& \cong H_3(\Sigma \times [a,b], \Sigma \times \{a,b\}; \Z).
  \end{align*}
 Since $[\Sigma]\times [\R]$ is sent to~$[\Sigma] \times [a,b]$ under this composition, and since $[\Sigma] \times [a,b]$ is the generator of $H_3(\Sigma \times [a,b], \Sigma \times \{a,b\}; \Z) $, we have shown that the chain~$[\Sigma] \times [\R]$ is a fundamental chain of~$X^\infty$.
This concludes the proof of the claim. \claimbox

Using successively Claim~\ref{claim:FundamentalClassXInfinity}, the third item of Lemma~\ref{lem:CrossProduct}, and the fact that $\PD_\Sigma$ and $-\cap [\Sigma]$ are mutual inverses, we obtain
\begin{align*}
	(\pi_\Sigma)^*\operatorname{PD}_{\Sigma}(a)  \cap [X^\infty]
&=(\pi_\Sigma)^*\operatorname{PD}_{\Sigma}(a)  \cap ([\Sigma] \times [\R]) \\
&=  (\operatorname{PD}_{\Sigma}(a) \cap [\Sigma])  \times [\R] \\
&=  a \times [\R].
\end{align*}
This concludes the proof of the lemma.
\end{proof}

We can now conclude our proof of the computation of the Milnor pairing.
As in Lemma~\ref{lem:nonSingularEval}, for $Y =\Sigma, X^\infty$, we obtain an evaluation map
\[ \langle -,-\rangle_{Y,M} \colon H^1(Y;M) \times H_1(Y;M) \to \F. \]
Recall furthermore that there is an inclusion induced map
 $\iota \colon H_1(\Sigma;M) \to H_1(X^\infty;M)$ and a projection induced map 
$(\pi_\Sigma)_* \colon H_1(X^\infty ;M) \to H_1(\Sigma; M)$.
The next proposition describes the twisted Milnor pairing on $X$ in terms of the twisted intersection form $\langle -,-\rangle_{\Sigma,M}$ of the fiber surface $\Sigma$.
\begin{proposition}\label{prop:MilnorIntersection}
	For $a \in H_1(\Sigma;M)$ and $c \in H_1(X^\infty; M)$, one has
	\[ \mu_{X,M}(\iota(a), c)=\langle a,  (\pi_\Sigma)_* c\rangle_{\Sigma,M}. \]
\end{proposition}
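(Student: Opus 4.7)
The plan is to unpack the definition of $\mu_{X,M}$ given in \eqref{eq:GeoMilnor} and apply in sequence the two geometric lemmas proved just above, namely Lemmas~\ref{lem:MilnorMapCross} and~\ref{lem:CrossProductMagic}. By Definition~\ref{def:GeometricTwistedMilnor}, the value $\mu_{X,M}(\iota(a),c)$ is obtained, up to the non-degenerate pairing $b$, by first applying the inverse Bockstein $\beta^{-1}\colon H_1(X^\infty;M)\to H_2^{\lf}(X^\infty;M)$, then locally finite Poincaré duality $\PD^{\lf}_{X^\infty;M}$, and finally the evaluation map on $c$.

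I would first use Lemma~\ref{lem:MilnorMapCross} to identify $\beta^{-1}(\iota(a))$ with the cross product chain $a\times[\R]\in H_2^{\lf}(X^\infty;M)$. Next, Lemma~\ref{lem:CrossProductMagic} gives
\[
\PD^{\lf}_{X^\infty;M}(a\times[\R]) = (\pi_\Sigma)^*\PD_\Sigma(a) \;\in\; H^1(X^\infty;M).
\]
Thus the inner cohomology class to be evaluated on $c$ is a pullback from $\Sigma$ under the projection $\pi_\Sigma\colon \wt\Sigma\times\R\to\wt\Sigma$.

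The third step is naturality of the twisted evaluation map with respect to the $\pi_1(X^\infty)=\pi_1(\Sigma)$--equivariant map $\pi_\Sigma$. On the chain level, the composition $\operatorname{ev}_M\circ (\pi_\Sigma)^*$ factors through $(\pi_\Sigma)_*$ in the second argument, so that for $\alpha\in H^1(\Sigma;M)$ and $c\in H_1(X^\infty;M)$ one has the adjunction
\[
\langle (\pi_\Sigma)^*\alpha,\, c\rangle_{X^\infty,M} \;=\; \langle \alpha,\, (\pi_\Sigma)_* c\rangle_{\Sigma,M}.
\]
Applying this with $\alpha=\PD_\Sigma(a)$ and post-composing with $b$ gives exactly $\langle a,(\pi_\Sigma)_* c\rangle_{\Sigma,M}$ in the notation of the proposition, which is the twisted intersection form of $\Sigma$ evaluated on $a$ and $(\pi_\Sigma)_*c$.

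The main obstacle is the adjunction step: one must check that the bimodule bookkeeping in Definition~\ref{def:EvaluationMap} is compatible with the pullback of cochains and the pushforward of chains along a map which is equivariant only with respect to $\pi_1(X^\infty)\subset \pi_1(X)$. Since $\pi_\Sigma$ is $\pi_1(\Sigma)$--equivariant and the module $M$ is restricted to $\pi_1(X^\infty)=\pi_1(\Sigma)$ in both evaluations, this amounts to a direct chain-level verification; once in hand, the three previously displayed identifications concatenate to yield the claimed formula.
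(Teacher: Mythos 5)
Your proposal is correct and follows essentially the same route as the paper's proof: unpack Definition~\ref{def:GeometricTwistedMilnor}, apply Lemma~\ref{lem:MilnorMapCross} to identify $\beta^{-1}(\iota(a))$ with $a\times[\R]$, then Lemma~\ref{lem:CrossProductMagic} to rewrite it as $(\pi_\Sigma)^*\PD_\Sigma(a)$, and finally invoke the pullback--pushforward adjunction of the twisted evaluation map. You also correctly flag the adjunction step as the one place requiring a chain-level equivariance check, which the paper elides under ``properties of the evaluation map.''
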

\begin{proof}
The definition of the Milnor pairing from Definition~\ref{def:GeometricTwistedMilnor}, Lemma~\ref{lem:MilnorMapCross}, Lemma~\ref{lem:CrossProductMagic}, properties of the evaluation map and the fact that $\pi_\Sigma \circ \iota=\id_\Sigma$ show that
\begin{align*}
\Big \langle \PD^{\text{lf}}_{X^\infty,M}\big( \beta^{-1} (\iota(a))\big),c \Big\rangle_{X^\infty}
 &=   \langle \PD^{\text{lf}}_{X^\infty,M}(a \times [\R]),c \rangle_{X^\infty} \\
 &= \langle (\pi_\Sigma)^*\operatorname{PD}_{\Sigma,M}(a) ,c \rangle_{X^\infty} \\
 &= \langle \operatorname{PD}_{\Sigma,M}(a), (\pi_\Sigma)_*(c) \rangle_{\Sigma}.
\end{align*}
This concludes the proof of the proposition.
\end{proof}

We can now prove our main result.
To state this result, we recall from Construction~\ref{const:FibredHomology} that the monodromy $\varphi \colon \Sigma \to \Sigma$ of a fibered $3$-manifold $f \colon M \to S^1$ induces an automorphism~$\varphi_M \colon H_1(\Sigma; M) \to H_1(\Sigma; M)$.
Also, the inclusion induces a map~$\iota \colon H_1(\Sigma;M) \to H_1(X^\infty;M)$.

\begin{theorem}\label{thm:MilnorIntersectionMonodromy}
Let $f \colon X \to S^1$ be a closed fibered $3$-manifold with fiber surface~$\Sigma$ and monodromy~$\varphi$. Let $M$ be a unitary representation such that $H_1(X^\infty;M)$ is~$\F[t^{\pm 1}]$-torsion. 
For $x,y \in H_1(\Sigma; M)$, the twisted Milnor pairing can be expressed~as
\[ \mu_{X,M}(\iota(x), t^k \cdot \iota(y))= \big\langle x,  \varphi_M^{-k} ( y )\big \rangle_{\Sigma, M}. \]
\end{theorem}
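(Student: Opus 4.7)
The plan is to reduce Theorem~\ref{thm:MilnorIntersectionMonodromy} to Proposition~\ref{prop:MilnorIntersection} by exploiting the $\Lambda$-module structure on $H_1(\Sigma;M)$ introduced in Construction~\ref{const:FibredHomology}. That construction endows $H_1(\Sigma;M)$ with a $\Lambda$-action under which multiplication by $t$ is precisely $\varphi_M^{-1}$, and by design the inclusion induced map $\iota\colon H_1(\Sigma;M)\to H_1(X^\infty;M)$ becomes an isomorphism of $\Lambda$-modules. The key consequence is that
$$t^k \cdot \iota(y) = \iota(\varphi_M^{-k}(y))$$
for every $y\in H_1(\Sigma;M)$ and every integer $k$.

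Given this, I would apply Proposition~\ref{prop:MilnorIntersection} to $a=x$ and $c=\iota(\varphi_M^{-k}(y))$, which yields
$$\mu_{X,M}(\iota(x),\, t^k\cdot \iota(y)) \;=\; \mu_{X,M}\bigl(\iota(x),\iota(\varphi_M^{-k}(y))\bigr) \;=\; \bigl\langle x,\, (\pi_\Sigma)_* \iota(\varphi_M^{-k}(y))\bigr\rangle_{\Sigma,M}.$$
Because the projection $\pi_\Sigma\colon \widetilde\Sigma\times \R\to \widetilde\Sigma$ is a retraction of $\iota$ (so that $\pi_\Sigma\circ \iota=\id_\Sigma$), the right-hand expression collapses to $\langle x, \varphi_M^{-k}(y)\rangle_{\Sigma,M}$, which is the claimed formula.

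The only step that requires genuine attention is the $\Lambda$-equivariance of $\iota$ with respect to the rule $t\cdot z=\varphi_M^{-1}(z)$ on $H_1(\Sigma;M)$. This is asserted in Construction~\ref{const:FibredHomology}, and can be checked on chain representatives: for $m\otimes \widetilde\sigma \in C_1(\Sigma;M)$ with $\widetilde\sigma$ a lift in $\widetilde\Sigma$, the deck translation from Construction~\ref{const:LambdaModule} sends $m\otimes \iota(\widetilde\sigma)\in C_1(X^\infty;M)$ to $m\cdot t^{-1}\otimes t\cdot \iota(\widetilde\sigma)$. The simplex $t\cdot\iota(\widetilde\sigma)$ lies in $\widetilde\Sigma\times\{1\}$, and under the equivariant deformation retraction $\pi_\Sigma$ it is identified with $\widetilde\varphi^{-1}(\widetilde\sigma)\in\widetilde\Sigma$ because $\widetilde\varphi$ is the basepoint-preserving lift of $\varphi$. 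This matches the defining formula $\varphi_M^{-1}(m\otimes x)= m\cdot t^{-1}\otimes \widetilde\varphi^{-1}(x)$ exactly. Once this chain-level identification is in hand, no further computation is needed and the theorem drops out by direct substitution.
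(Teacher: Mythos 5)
Your proposal is correct and follows essentially the same route as the paper's proof: both reduce to Proposition~\ref{prop:MilnorIntersection} and both hinge on the chain-level identification of the $t$-action with $\varphi_M^{-1}$ via the deck transformation formula $(x,s)\mapsto(\widetilde\varphi^{-1}(x),s+1)$ and Construction~\ref{const:LambdaModule}. The only cosmetic difference is bookkeeping: you first rewrite $t^k\iota(y)=\iota(\varphi_M^{-k}(y))$ and then use $\pi_\Sigma\circ\iota=\id$, whereas the paper directly computes $(\pi_\Sigma)_*(t^k\iota(y))=\varphi_M^{-k}(y)$ before feeding it into the proposition — these are the same step, since $\iota$ and $(\pi_\Sigma)_*$ are mutually inverse.
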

\begin{proof}
For an elementary tensor $m \otimes c \in C_1(X^\infty; M)$, the $\Lambda$-action described in Construction~\ref{const:LambdaModule} gives 
$t \cdot (m\otimes c) = m\cdot t^{-1} \otimes_{\Z[\pi_1(X^\infty)]} t\cdot c$.
As the deck transformation associated to $t \in \pi_1(X)$ on $\widetilde{X}=\widetilde{\Sigma} \times \R$ is given by 
$(x,s) \mapsto \big( (\wt \varphi)^{-1}(x), s+1)$, we then obtain
\begin{align*}
	(\pi_\Sigma)_* \big( t \cdot (m\otimes c) \big) &= m\cdot t^{-1} \otimes_{\Z[\pi_1(\Sigma)]} (\pi_\Sigma)_* \big( t \cdot c \big) \\
	&= m \cdot t^{-1} \otimes_{\Z[\pi_1(\Sigma)]} (\wt \varphi)^{-1}\big( (\pi_\Sigma)_*(c) \big). 
\end{align*}
Now write the chain $\iota(y) \in C_1(X^\infty;M)$, as $\iota(y)=\sum_i m_i \otimes \iota(y_i)$ and use this equality as well as the description of $\varphi_M$ from Construction~\ref{const:FibredHomology} to get 
\[ (\pi_\Sigma)_* \big( t \cdot \iota(y) \big) = \sum_i m_i \cdot t^{-1} \otimes_{\Z[\pi_1(\Sigma)]}(\wt \varphi)^{-1} (y_i) = \varphi_M^{-1}(y).\]
Use this equality and apply Proposition~\ref{prop:MilnorIntersection} to obtain:
\[ \mu_{X,M}(\iota(x), t \cdot \iota(y)) = \big\langle x,  (\pi_\Sigma)_*(t \cdot \iota(y)) \big \rangle_{\Sigma, M} = \big\langle x, \varphi_M^{-1}(y) \big \rangle_{\Sigma, M}. \]
The proof for $k>1$ is identical and this concludes the proof of the theorem.
\end{proof}

\begin{example}[Levine-Tristram signature]
\label{ex:ErleLT}
Let $K$ be a fibered knot and $\omega \in S^1$. 
Denote the exterior of $K$ by~$X_K$, its fundamental group by~$\pi$, and the abelianization map by $f \colon \pi \to \Z$.
Define the $(\C, \Z[\pi])$--bimodule~$\C^\omega$, which has $\C$ as the underlying vector space, and action $\gamma \cdot z = \omega^{f(\gamma)}z$ for $\gamma \in \pi$ and $z \in \C$.

Pick a fiber surface~$\Sigma$, which has genus~$g = g_3(K)$ of $K$.
Note that if we restrict $\C^\omega$ to $\pi^\infty = [\pi, \pi]$, we simply obtain $\C$ with the trivial action. Thus, $H = H_1(\Sigma; \C^\omega) = \C^{2g}$. 
	Let $A$ be a Seifert matrix of $\Sigma$, which can be assumed to be invertible (recall that any Seifert matrix associated to a fiber surface of a fibered knot is invertible over $\Z$).
The intersection paring $\langle -, - \rangle_\Sigma$ is represented by the matrix~$A - A^\tr$, i.e. it is isometric to $(x,y) \mapsto x^\tr (A-A^\tr)\overline{y}$. 
As the action of the monodromy on~$H_1(\Sigma;\Z)$ is represented by $A^{-1}A^\tr$~\cite[Lemma~8.1]{SavelievLectures}, Construction~\ref{const:FibredHomology}, implies that the action of~$t \in \Lambda$ on $H_1(\Sigma;\C^\omega)$ is given by $\big( \omega A^{-1}A^\tr \big)^{-1} \in \op{Mat}(\C,2 g)$.
We therefore obtain a skew-isometric structure $(\C^{2g}, A-A^\tr, \omega^{-1} A^{-\tr }A)$; see Definition~\ref{def:IsometricStructure} below.

We compute the signature of the form~$b_{K,\C^\omega}(x,y)=\langle t\cdot x, y \rangle_\Sigma - \langle x, t\cdot y\rangle_\Sigma$. 
Using the matrices described above, this pairing is represented by
    \begin{align*} 
    b_{K,\C^\omega} &\sim \omega^{-1} \big( A^{-\tr}A \big)^\tr (A - A^\tr) - \omega(A - A^\tr) A^{-\tr}A\\
    &= \omega^{-1} A^\tr A^{-1} (A - A^\tr) - \omega(A - A^\tr) A^{-\tr}A\\
    &\sim \omega^{-1} (A - A^\tr)^{-\tr} A^{\tr}A^{-1} - \omega (A - A^\tr)^{-\tr}(A - A^\tr) A^{-\tr}A (A - A^\tr)^{-1}\\
    &\sim \omega (A - A^\tr)^{-1}(A - A^\tr) A^{-\tr}A (A - A^\tr)^{-1} - \omega^{-1} (A - A^\tr)^{-1} A^{\tr}A^{-1}\\
    &\sim \omega A^{-\tr}A (A - A^\tr)^{-1} - \omega^{-1} (A - A^\tr)^{-1} A^{\tr}A^{-1}\\
    &\sim \omega A (A - A^\tr)^{-1} A - \omega^{-1} A^\tr (A - A^\tr)^{-1} A^{\tr}.
    \end{align*}
We can assume that $A - A^\tr = J$ with $J$ the standard symplectic form, which fulfills the relations~$J^{-1} = J^\tr = -J$. Compute
	\begin{align*} 
    b_{K,\C^\omega} & \sim \omega A(A - A^\tr)^{-1} A - \omega^{-1} A^\tr (A - A^\tr)^{-1} A^\tr \\
                    & = -\omega A J A + \omega^{-1} A^\tr J A^\tr\\
                    & = -\omega (J+A^\tr) J A + \omega^{-1} A^\tr J (A-J)\\
                    & = \big( \omega A + \omega^{-1} A^\tr \big) - \omega A^\tr J A  +\omega^{-1} A^\tr J A\\
                    & = \big( \omega A + \omega^{-1} A^\tr \big) + \big(  A^\tr (\omega^{-1}J - \omega J) A \big).
	\end{align*}
For $\omega=1$, we deduce that $b_{K,\C^\omega}$ is represented by the matrix $A+A^\tr$, recovering a result of Erle in the fibered case~\cite{Erle}.
In particular, $\sign(b_{K,\C^1})=\sigma_K(-1)$.
For general $\omega \in S^1$ (regardless of whether the knot~$K$ is fibered), we can show that~$\sign(b_{K,\C^\omega})=\sigma_K(-\omega)-\sigma_K(\omega)$ by using the signature jumps from~\cite{BorodzikConwayPolitarczyk} and Theorem~\ref{thm:MainMain} below, but how to derive this from the matrix calculations above remains unclear.
\end{example}

\section{Relation to the twisted Blanchfield pairing}
\label{sec:BlanchfieldMilnor}

In this section, we relate the twisted Milnor pairing to the twisted Blanchfield pairing; this is done by using a trace map first introduced by Litherland~\cite{LitherlandCobordism}.
We then deduce that the twisted Milnor pairing is skew-Hermitian and non-singular.

\subsection{The trace map}
\label{sub:Trace}
We recall the definition of the trace map due to Litherland~\cite[Appendix A]{LitherlandCobordism}; see also~\cite[Section 28.E]{RanickiHighDimensionalKnotTheory}.
\medbreak

For a field $\F$ of characteristic zero, we set $\Lambda:=\LF,\Gamma:=\F[[t]]$ and consider the rings $\Lambda \subset \Gamma^-,\Gamma^+ \subset~\Gamma$ just as in~\eqref{eq:GammaPlusMinus}.
Each of these rings is also a $\Lambda$-module. 
Recall from Remark~\ref{rem:GammaFields} that $\Gamma^\pm$ is a field.
        The universal property of the field of fractions~$Q$, and the canonical inclusions~$\Lambda \to~\Gamma^+$ and $\Lambda \to \Gamma^-$ give rise to~$\Lambda$-linear morphisms
\begin{align*}
&i_{+}\colon Q \rightarrow \Gamma^+, \\
&i_{-}\colon Q \rightarrow \Gamma^-,
\end{align*}
which send~$p(t)/q(t) \mapsto p(t)\cdot q(t)^{-1}$ where the inverse is taken in the field~$\Gamma^\pm$.
Observe that the difference~$i_+-i_-$ of these morphisms vanishes on~$\Lambda$.
Since $\Gamma^+$ and~$\Gamma^-$ include into $\Gamma$, this difference induces an~$\Lambda$--linear map~$Q/\Lambda\to \Gamma$. 
 We use~$\text{const} \colon \Gamma \rightarrow~\F$ to denote the~$\F$--linear map that takes a power series to its constant term. 
The main definition of this subsection, which is due to Litherland, is the following.
\begin{definition}
\label{def:Trace}
The \emph{trace map}~$\chi \colon Q/\Lambda \rightarrow \F$ is the $\F$-linear map~
\[ \chi := \text{const}  \circ (i_+-i_-) \colon Q/\Lambda \to \F.\]
\end{definition}

This definition of the trace map first appears in~\cite[Appendix A]{LitherlandCobordism}, but we refer to~\cite[Section 28.E]{RanickiHighDimensionalKnotTheory} 
for related discussions.
Computations involving $\chi$ appear in Subsection~\ref{sub:RelationToJump}, but here, we only list some of its properties, referring to~\cite[Proposition~A.3]{LitherlandCobordism} for a proof.

\begin{proposition}\label{prop:trace}
	The trace map~$\chi\colon Q/\Lambda \rightarrow \F$ has the following properties:
\begin{enumerate}
	\item\label{item:ChiOverline} $\chi( \overline x)=-\overline{\chi(x)}$ for all $x \in Q/\Lambda$;
	\item\label{item:TorsionHom} the induced map $\chi \colon \Hom_{\Lambda}(H,Q/\Lambda) \xrightarrow{\sim} \Hom_\F(H,\F)$ is an $\F$-isomorphism for every $\Lambda$--torsion module~$H$.
\end{enumerate}
\end{proposition}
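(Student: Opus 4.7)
The plan is to verify the two properties separately. For property (1), the involution on $\Lambda$ extends to $\Gamma$ by $\overline{\sum a_i t^i} = \sum \overline{a_i} t^{-i}$, and this extension interchanges $\Gamma^+$ with $\Gamma^-$. First I would note that a $\Lambda$--linear map $Q \to \Gamma^\pm$ extending the inclusion $\Lambda \hookrightarrow \Gamma^\pm$ is unique, by universality of the field of fractions and the fact that $\Gamma^\pm$ is a field. Consequently, both $x \mapsto \overline{i_+(x)}$ and $x \mapsto i_-(\overline{x})$ are anti-$\Lambda$--linear maps $Q \to \Gamma^-$ agreeing on $\Lambda$, so they coincide; likewise with the roles of $+$ and $-$ swapped. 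Combining the two identities gives $(i_+ - i_-)(\overline{x}) = -\overline{(i_+ - i_-)(x)}$, and since $\operatorname{const}(\overline{f}) = \overline{\operatorname{const}(f)}$ (the involution preserves the constant term up to conjugation in $\F$), property (1) follows.

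For property (2) I would separate injectivity and surjectivity. The key pointwise claim for injectivity is that if $x \in Q/\Lambda$ satisfies $\chi(t^k x) = 0$ for every $k \in \Z$, then $x = 0$. Indeed, for any lift $y \in Q$ one has $\chi(t^k y) = \operatorname{const}\bigl(t^k(i_+(y) - i_-(y))\bigr)$, which is precisely the coefficient of $t^{-k}$ in $i_+(y) - i_-(y) \in \Gamma$. The hypothesis forces this series to vanish, so $i_+(y) = i_-(y)$ lies in $\Gamma^+ \cap \Gamma^- = \Lambda$; since $i_+ \colon Q \hookrightarrow \Gamma^+$ is injective, $y \in \Lambda$, i.e.\ $x = 0$. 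This immediately implies injectivity of $\chi_*$ on any $\Lambda$--module: given $\phi$ with $\chi \circ \phi = 0$, for each $h \in H$ and $k \in \Z$ we have $\chi(t^k \phi(h)) = \chi(\phi(t^k h)) = 0$, so $\phi(h) = 0$ by the claim.

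For surjectivity I would first handle $H = \Lambda/(p)$ with $p \neq 0$. The standard identification yields $\Hom_\Lambda(\Lambda/(p), Q/\Lambda) \cong (Q/\Lambda)[p] = (1/p)\Lambda/\Lambda \cong \Lambda/(p)$, which has the same finite $\F$--dimension as $\Hom_\F(\Lambda/(p), \F)$. Combined with injectivity, this upgrades to bijectivity. The structure theorem for finitely generated torsion modules over the PID $\Lambda$ then extends the result to finitely generated torsion $H$, since both Hom functors commute with finite direct sums in the first argument. For an arbitrary torsion $H$ and any $f \in \Hom_\F(H, \F)$, each element generates a finitely generated (hence cyclic, since torsion over a PID) submodule; the cyclic case produces a unique $\Lambda$--linear lift of $f$ on each such submodule, and uniqueness forces these lifts to agree on intersections. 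They therefore glue to a well-defined $\phi \in \Hom_\Lambda(H, Q/\Lambda)$ with $\chi \circ \phi = f$.

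The main obstacle I anticipate is not any single step but the bookkeeping in extending from finitely generated to arbitrary torsion modules: the compatibility of the local lifts is what makes the gluing argument work, and this rests crucially on the injectivity established in the first half. A secondary technicality is normalizing representatives (multiplying $p$ by a unit $t^n$ to bring it into $\F[t]$ with nonzero constant term) when computing $\dim_\F \Lambda/(p)$ to justify that it is indeed finite and equal to the ordinary degree of the associated polynomial.
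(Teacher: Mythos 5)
The paper does not supply its own proof of this proposition; it defers to Litherland~\cite[Proposition~A.3]{LitherlandCobordism}. Your proof is a correct, self-contained argument. For~(1), the observation that $x \mapsto \overline{i_+(\overline{x})}$ is a ring homomorphism $Q \to \Gamma^-$ extending the inclusion $\Lambda \hookrightarrow \Gamma^-$, hence equals $i_-$ by the universal property of the field of fractions, yields $i_-(\overline{x}) = \overline{i_+(x)}$, and this immediately gives the sign formula after applying $\operatorname{const}$. For~(2), your pointwise injectivity criterion is the crux: if $\chi(t^k x)=0$ for all $k$ then the coefficients of $(i_+-i_-)(y)$ all vanish, so $i_+(y)=i_-(y) \in \Gamma^+\cap\Gamma^-=\Lambda$, forcing $y\in\Lambda$; this is correct. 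The dimension count $\Hom_\Lambda(\Lambda/(p),Q/\Lambda)\cong(1/p)\Lambda/\Lambda\cong\Lambda/(p)$ and the reduction via the structure theorem over the PID $\Lambda$ are standard. Your extension to arbitrary (not finitely generated) torsion modules via uniqueness-forced gluing is also correct and actually does justice to the statement as written, although Remark~\ref{rem:Module} and the applications only involve finitely generated modules (the remark's ``torsion $\iff$ finite $\F$-dimension'' equivalence is tacitly assuming finite generation); without seeing Litherland's argument one cannot say whether he treats the general case.
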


If $\lambda \colon H \times H \to \F(t)/\LF$ is a linking form, then $(x,y) \mapsto \chi\big( \lambda(x,y) \big)$ is a skew-Hermitian form; see Proposition~\ref{prop:trace} \eqref{item:ChiOverline}. 
Moreover, since~$\lambda$ is sesquilinear, we also have $\chi\big( \lambda(tx,ty)\big)=\chi \big( \lambda(x,y) \big)$.
In the language of Subsection~\ref{sub:SkewIsometric} below, the triple~$(H,\chi \circ \lambda,t)$ is an example of a skew-isometric structure over $\F$.

\subsection{Milnor and Blanchfield pairings}

In this subsection, we briefly recall the definition of the twisted Blanchfield pairing and relate it to the twisted Milnor pairing.
As a consequence, we deduce that the twisted Milnor pairing is skew-Hermitian and non-singular.
\medbreak
Let $X$ be a closed $3$-manifold, and let $(M,b)$ be
a unitary representation such that~$H_1(X; M(\Lambda))$ is $\Lambda$-torsion. 
The short exact sequence $0 \to \Lambda \to Q \to Q/\Lambda \to 0 $ of coefficients gives rise to a long exact sequence in homology with connecting homomorphism
$\beta^{Q/\Lambda} \colon H_2(X;M(Q/\Lambda))  \to H_1(X;M(\Lambda)). $
Since $H_1(X; M(\Lambda))$ is~$\Lambda$-torsion,~$\beta^{Q/\Lambda}$ is an isomorphism.
Consider the composition
\begin{align*}
	H_1(X;M(\Lambda)) 
&\xrightarrow{ \big( \beta^{Q/\Lambda} \big)^{-1} } H_2(X; M(Q/\Lambda)) \\
&\xrightarrow{ \PD_X } H^1(X; M( Q/\Lambda)) \\
&\xrightarrow{ \op{ev}_{M(\Lambda)} } 
\Hom_{\text{right-}\Lambda}\left( H_1(X; M(\Lambda))^\tr , M(Q/\Lambda) \otimes_{\Z[\pi_1(X)]} M(\Lambda)^\tr \right)\\
&\xrightarrow{\wt b_*} \Hom_{\text{right-}\Lambda}\left( H_1(X; M(\Lambda)) , Q/\Lambda \right)  \tag{Blanchfield} \label{eqn:BlanchfieldPairing}
\end{align*}
of the following four $\Lambda$-linear maps: the Bockstein homomorphism, Poincar\'e duality, the evaluation map,
and the $(\Lambda, \Lambda)$--morphism~$\wt b \colon M(Q/\Lambda) \otimes_{\Z[\pi]} M(\Lambda)^\tr \to Q/\Lambda$ obtained by extending $b$.
The \emph{twisted Blanchfield pairing} 
\[ \op{Bl}_{X,M} \colon H_1(X;M(\Lambda)) \times H_1(X;M(\Lambda)) \to Q/\Lambda \]
is the $\Lambda$--sesquilinear pairing defined by the composition in \eqref{eqn:BlanchfieldPairing}.
The Blanchfield pairing is known to be non-singular (since $\LF$ is a PID) and Hermitian~\cite{PowellBlanchfield}.
The next theorem relates the twisted Blanchfield pairing to the twisted Milnor pairing.

Recall that we fix map~$f \colon X \to S^1$ that induces a surjection~$f_* \colon \pi_1(X) \twoheadrightarrow \Z$.
Associated to $f_*$ is an infinite cyclic cover $X^\infty$ with $\pi_1(X^\infty)=\ker(f_*)$.
From this data, we built in Construction~\ref{cons:LambdaCoeff} the module~$M(\Lambda)$.
\begin{theorem}
\label{thm:BlanchfieldMilnor}
Let $X$ be a closed $3$-manifold equipped with a map $f \colon X \to S^1$.
Let $M$ be a unitary representation
such that $H_1(X;M(\Lambda))$ is~$\F[t^{\pm 1}]$-torsion.
The twisted Milnor pairing and the twisted Blanchfield pairing are related by the formula 
\[ \chi \circ \op{Bl}_{X,M} = \mu_{X,M}.\]
\end{theorem}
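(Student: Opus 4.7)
The plan is to compare the defining compositions~\eqref{eqn:BlanchfieldPairing} and~\eqref{eqn:MilnorPairing} by producing a morphism of short exact sequences of $\Lambda$-modules that links the two Bockstein homomorphisms, and then invoking naturality of all subsequent ingredients. Concretely, I would first observe that the short exact sequences
\[ 0 \to \Lambda \to Q \to Q/\Lambda \to 0 \qquad \text{and} \qquad 0 \to \Lambda \to \Gamma^- \oplus \Gamma^+ \to \Gamma \to 0 \]
fit into a commutative diagram in which the leftmost vertical is $\id_\Lambda$, the middle vertical is $(i_-, i_+) \colon Q \to \Gamma^- \oplus \Gamma^+$ (well defined because every nonzero element of $\Lambda$ is invertible in both $\Gamma^\pm$ by Remark~\ref{rem:GammaFields}), and the rightmost vertical is the induced $\Lambda$-linear map $i_+ - i_- \colon Q/\Lambda \to \Gamma$, which is well defined since $i_+$ and $i_-$ agree on $\Lambda \subset Q$. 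Tensoring over $\F$ with $M \cong \F^d$ preserves exactness and yields the analogous diagram of $(\Lambda, \Z[\pi_1(X)])$-bimodules. Naturality of the connecting homomorphism then produces a commuting square linking $\beta^{Q/\Lambda}$ with $\beta^\Gamma$ via the coefficient map $(i_+-i_-)_*$.

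The rest of the argument is a diagram chase. By Lemma~\ref{lem:PDCoeff}, twisted Poincar\'e duality commutes with any change of coefficients, hence with $(i_+ - i_-) \colon M(Q/\Lambda) \to M(\Gamma)$. The evaluation map of Subsection~\ref{sub:EvaluationMaps} is natural in the coefficient module, so it too commutes with the map induced by $(i_+ - i_-)$. Finally, the extended pairing $\widetilde{b}$ is given by the same formula for either set of coefficients and $i_+ - i_-$ is $\Lambda$-linear with respect to the right $\Lambda$-actions, so the square
\[
\begin{tikzcd}
M(Q/\Lambda) \otimes_{\Z[\pi]} M(\Lambda)^\tr \ar[r,"\widetilde b"] \ar[d,"(i_+ - i_-)\otimes \id"] & Q/\Lambda \ar[d,"i_+ - i_-"] \\
M(\Gamma) \otimes_{\Z[\pi]} M(\Lambda)^\tr \ar[r,"\widetilde b"] & \Gamma
\end{tikzcd}
\]
commutes.

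Stacking these commuting squares along the compositions~\eqref{eqn:BlanchfieldPairing} and~\eqref{eqn:MilnorPairing} shows that $(i_+ - i_-)_* \circ \op{Bl}_{X,M}$ coincides with the composition defining $\widetilde{\mu}_{X,M}$ at every stage up to but not including the final map $\op{const} \colon \Gamma \to \F$. Post-composing both sides with $\op{const}$ and invoking Definition~\ref{def:Trace} gives the claimed identity $\chi \circ \op{Bl}_{X,M} = \mu_{X,M}$. I expect the main obstacle to be purely diagrammatic bookkeeping: each individual square is a standard naturality statement for the Bockstein, twisted Poincar\'e duality, or the evaluation map, but one needs to keep careful track of the right $\Lambda$-actions and the identifications $M(R) = R \otimes_\F M$ as $R$ varies through $\Lambda$, $Q$, $Q/\Lambda$, $\Gamma^\pm$, and $\Gamma$. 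No genuinely new input beyond these naturality statements appears necessary.
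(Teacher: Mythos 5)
Your proposal is correct and follows essentially the same strategy as the paper: you identify the same morphism of short exact sequences of coefficients (with $(i_-,i_+)$ and $i_+-i_-$ as vertical maps) and then chase naturality of the Bockstein, twisted Poincar\'e duality (Lemma~\ref{lem:PDCoeff}), the evaluation map, and the extended pairing $\widetilde b$, concluding with Definition~\ref{def:Trace}. The only cosmetic difference is that you compare the two \emph{homological} Bocksteins $\beta^{Q/\Lambda}$ and $\beta^\Gamma$ directly, whereas the paper's diagram also routes the Blanchfield side through $H^2(X;M(\Lambda))$ and invokes Lemma~\ref{lem:BocksteinPD} to commute Poincar\'e duality past the Bockstein — your route is slightly more economical, but the argument is the same.
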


\begin{proof}
We work with the definition of the Milnor pairing from Subsection~\ref{sub:TwistedMilnorHomological}.
Throughout this proof, we use $i_+$ and $i_-$ to denote the maps $Q \to \Gamma^+$ and~$Q \to \Gamma^-$ described in Subsection~\ref{sub:Trace} as well as the various maps they induce on homology and cohomology.
The commutative diagram
\[
\begin{tikzcd}
	0 \ar[r]&  M(\Lambda)  \ar[r]\ar[d,"\id"] &M(Q) \ar[r] \ar{d}{(i_-,i_+)}  & M(Q /\Lambda) \ar{d}{i_+-i_-}\ar[r]&   0 \\
0 \ar[r] &  M(\Lambda)  \ar{r}{\bsm 1\\ 1\esm} &M(\Gamma_-) \oplus M(\Gamma_+) \ar{r}{\bsm -1 \ 1\esm}  &M(\Gamma)  \ar[r] & 0  
\end{tikzcd}
\]
implies that the map ${\beta^\Gamma}^{-1}$ decomposes as the composition $(i_+-i_-) \circ {\beta^{Q/\Lambda}}^{-1}$. The left pentagon of the following diagram therefore commutes, while the commutativity of the right square is clear:
\[ \begin{tikzcd}
& & \op{Hom}_\F(H_1(X;M(\Lambda)),\F)^\tr \\
	H_2(X;M(\Gamma)) \ar[r,"\PD_X"] &  H^1(X;M(\Gamma))  \ar{r}{\widetilde{b} \circ \op{ev}_{M(\Lambda)}}& \text{Hom}_\Lambda(H_1(X;M(\Lambda)),\Gamma)^\tr  \ar[u,"\text{const}_*"] &\\
	H_1(X; M(\Lambda)) \ar{u}{{(\beta^\Gamma})^{-1}} \ar{rd}[swap]{\PD_X}&   H^1(X;M(Q/\Lambda)) \ar{u}[swap]{(i_+-i_-)^*}  \ar[r, "\widetilde{b} \circ \op{ev}_{M(\Lambda)}"]& \text{Hom}_\Lambda(H_1(X;M(\Lambda)),Q/\Lambda)^\tr. \ar[u,"(i_+-i_-)^*"] \\
	& H^2(X;M(\Lambda)) \ar{u}[swap]{\op{(\beta^{Q/\Lambda}})^{-1}}
\end{tikzcd} \]
The uppermost route gives the twisted Milnor pairing and the lower most route gives the twisted Blanchfield pairing postcomposed with~$\chi$. 
We thus established that~$\chi \circ \op{Bl}_{X,M} = \mu_{X,M}$, concluding the proof of the theorem.
\end{proof}

In the untwisted case, Theorem~\ref{thm:BlanchfieldMilnor} recovers~\cite[Theorem A.1]{LitherlandCobordism} up to a sign.
\begin{remark}
\label{rem:LitherlandSign}
Using an argument involving intersections of singular chains, Litherland showed in the untwisted setting that $\chi \circ \op{Bl}_{X} =-\mu_{X}$.
Such arguments appear to be difficult to export to the twisted setting.
Regarding the sign: if we apply the trace to the formula for $\op{Bl}_X$ in~\cite[Theorem 1.3]{FriedlPowell}, we obtain $\chi \circ \op{Bl}_X \sim A-A^T$ which, according to Theorem~\ref{thm:MilnorIntersectionMonodromy} also represents $\mu_X$ in the fibered case.
\end{remark}

Next, we deduce some elementary properties of the twisted Milnor pairing.
\begin{corollary}
\label{cor:SkewSym}
The twisted Milnor pairing is non-singular and skew-Hermitian.
\end{corollary}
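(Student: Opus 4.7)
The plan is to deduce both properties directly from Theorem~\ref{thm:BlanchfieldMilnor} together with the properties of the trace map collected in Proposition~\ref{prop:trace}, using the already-established fact that $\op{Bl}_{X,M}$ is Hermitian and non-singular (the latter is recalled just above Theorem~\ref{thm:BlanchfieldMilnor}).

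For the skew-Hermitian property, I would start from the identity $\op{Bl}_{X,M}(y,x) = \overline{\op{Bl}_{X,M}(x,y)}$ and apply $\chi$ on both sides. Using Theorem~\ref{thm:BlanchfieldMilnor} to rewrite $\chi \circ \op{Bl}_{X,M}$ as $\mu_{X,M}$, and Proposition~\ref{prop:trace}\eqref{item:ChiOverline} to move $\chi$ past the involution with a sign change, I obtain
\[ \mu_{X,M}(y,x) = \chi\bigl(\overline{\op{Bl}_{X,M}(x,y)}\bigr) = -\overline{\chi\bigl(\op{Bl}_{X,M}(x,y)\bigr)} = -\overline{\mu_{X,M}(x,y)}, \]
which is the skew-Hermitian property.

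For non-singularity, I would pass to the adjoints. Writing $\op{Bl}_{X,M}^\#$ and $\mu_{X,M}^\#$ for the adjoints of the two pairings, Theorem~\ref{thm:BlanchfieldMilnor} gives the commutative diagram
\[ \begin{tikzcd}
H_1(X;M(\Lambda)) \ar[r, "\op{Bl}_{X,M}^\#"] \ar[rd, "\mu_{X,M}^\#"'] & \Hom_{\Lambda}(H_1(X;M(\Lambda)),Q/\Lambda) \ar[d, "\chi_*"] \\
& \Hom_{\F}(H_1(X;M(\Lambda)),\F).
\end{tikzcd} \]
The horizontal map is an isomorphism by non-singularity of $\op{Bl}_{X,M}$, and the vertical map $\chi_*$ is an isomorphism by Proposition~\ref{prop:trace}\eqref{item:TorsionHom} since $H_1(X;M(\Lambda))$ is $\Lambda$-torsion by hypothesis. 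Consequently the composite $\mu_{X,M}^\#$ is an isomorphism, so $\mu_{X,M}$ is non-singular.

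I do not anticipate any serious obstacle: once Theorem~\ref{thm:BlanchfieldMilnor} and Proposition~\ref{prop:trace} are available, both statements are one-line consequences. The only minor subtlety is to keep track of the sesquilinearity conventions and to verify that $\chi_*$ is well-defined as a map between the two Hom spaces (which follows from the $\Lambda$-linearity of $\chi \colon Q/\Lambda \to \F$ when targets are viewed correctly), but this is already packaged in Proposition~\ref{prop:trace}\eqref{item:TorsionHom}.
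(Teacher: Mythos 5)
Your proposal is correct and follows exactly the paper's own argument: both properties are deduced from Theorem~\ref{thm:BlanchfieldMilnor}, with skew-Hermitianness coming from Proposition~\ref{prop:trace}\eqref{item:ChiOverline} plus the Hermitianness of $\op{Bl}_{X,M}$, and non-singularity coming from Proposition~\ref{prop:trace}\eqref{item:TorsionHom} plus the non-singularity of $\op{Bl}_{X,M}$. You simply spell out the one-line deductions and the adjoint diagram that the paper leaves implicit.
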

\begin{proof}
The fact that the Milnor pairing is skew-Hermitian follows from Theorem~\ref{thm:BlanchfieldMilnor} by additionally using the first item of Proposition~\ref{prop:trace} and the fact that the Blanchfield pairing is Hermitian.
The fact that the Milnor pairing is non-singular follows from Theorem~\ref{thm:BlanchfieldMilnor} by additionally using the second item of Proposition~\ref{prop:trace} and the fact that the Blanchfield pairing is non-singular (here we are using that the pairing~$b \colon M \otimes_{\Z[\pi]} M^\tr \to \F$ is non-singular). 
\end{proof}

\section{Signatures associated to the twisted Milnor pairing}
\label{sec:Isometric}

Since we have established that the Milnor pairing is skew-Hermitian, we can extract signature invariants by symmetrizing it.
As the Milnor pairing is also non-singular, it determines a ``skew-isometric structure".
The aim of this section is to discuss signature invariants associated to skew-isometric structures, and describe how these definitions can be understood from the viewpoint of Witt theory.
In Subsection~\ref{sub:SkewIsometric}, we review some basics on skew-isometric forms and their symmetrisation.
In Subsection~\ref{sub:WittSkewIsom}, we recall how the primary decomposition gives rise to additional signature invariants.

\subsection{Skew-isometric structures}
\label{sub:SkewIsometric}

Let $\F$ be a field with involution $x \mapsto \overline{x}$.
A sesquilinear form $\mu \colon H \times H \to \F $ is \emph{$\varepsilon$-Hermitian} if
$\ol{\mu(y,x)} = \varepsilon \mu(x,y)$.
For $\varepsilon = 1$, the form is called \emph{Hermitian} and for $\varepsilon = -1$, it is called \emph{skew-Hermitian}.

\begin{definition}
\label{def:IsometricStructure}
An \emph{$\varepsilon$-isometric structure} consists of a triple $(H,\mu,t)$, where
\begin{enumerate}
\item $H$ is a finite dimensional $\F$-vector-space;
\item $\mu \colon H \times H \to \F $ is a non-singular $\varepsilon$-Hermitian form over $\F$;
\item $t \colon H \to H$ is an isometry of $\mu$.
\end{enumerate}
\end{definition}

We are following Litherland's terminology~\cite[Appendix A]{LitherlandCobordism} (see also~\cite[Paragraph 6]{LevineInvariants}). On the other hand, Ranicki refers to isometric structures as ``autometric structures"~\cite[Example 28.12]{RanickiHighDimensionalKnotTheory}.

In practice, we will often choose bases and think of the $\varepsilon$-Hermitian form and isometry as matrices.
Here, it is understood that an $\varepsilon$-Hermitian matrix $A$ defines an $\varepsilon$-Hermitian form by setting~$\mu(x,y):= x^T A \overline{y}.$
We start with an example of a skew-isometric structure over $\LR$.

\begin{example}
\label{ex:Elementarye1R}
Over $\F=\R$ with the trivial involution, $\left(\R^2,\bsm 0&1 \\ -1&0 \esm,\bsm 0&-1 \\ 1&2\operatorname{Re}(\xi) \esm \right)$ is a skew-isometric structure for each $\xi \in S^1$. 
We denote it by $\mathbf{e}(1,1,\xi,\R)$ which, for~$n \in \N$,
matches with the notation for the linking form~$\mathfrak{e}(n,\epsilon,\xi,\R)$ from~\cite{BorodzikConwayPolitarczyk}.
\end{example} 

The next example shows that the role of the involution must not be neglected: for~$\F=\C$ with the trivial involution, the $\C$-vector space $H$ must be even dimensional (since it supports a non-singular skew-symmetric form), while with conjugation as an involution, odd-dimensional vector spaces arise.
\begin{example}
\label{ex:Elementarye1C}
Over $\C$ with conjugation as an involution, $(\C, 2i |\operatorname{Im}(\xi))|,(\xi))$ is a skew-isometric structure for each $\xi \in S^1$.
We denote it by $\mathbf{e}(1,1,\xi,\C)$.
\end{example} 

From now on, $\C$ will be understood to be endowed with the conjugation as an involution.
Next, we move on to our main example.
\begin{example} \label{ex:ClassicalMilnor}
	Let $X$ be a closed $3$-manifold with an epimorphism $\pi_1(X) \twoheadrightarrow \Z$.
	Let~$M$ be a unitary representation such that~$H:=H_1(X;M(\Lambda))$ is~$\Lambda$-torsion.
	Note that $H$ is a finite dimensional vector space over $\F$; recall Remark~\ref{rem:Module}.
	The action by $t \in \Lambda$ defines an automorphism~$t \colon H \to H$, which preserves the twisted Milnor pairing $\mu:=\mu_{X,M}$. 
	Thus,~$(H, \mu, t)$ defines a skew-isometric structure by Corollary~\ref{cor:SkewSym}.
\end{example}

Next, we discuss a symmetrization process for skew-isometric structures.
\begin{definition} \label{def:Symmetrization}
Let $(H,\mu,t)$ be a skew-isometric structure over $\F$, for which the~$\LF$-module $H$ has no $(t \pm 1)$-primary summand. 
The \emph{symmetrization} of~$(H,\mu,t)$ is the isometric structure $\operatorname{Sym}(H,\mu,t):=(H,b_\mu,t)$, where the Hermitian form $b_\mu$ is given by
\[ b_\mu(x,y):=\mu(tx,y)-\mu(x,ty). \]
The \emph{signature} of such a skew-isometric structure $(H,\mu,t)$ is the signature of~$b_\mu$:
\[  \sign(H,\mu,t):=\sign(b_\mu).  \]
\end{definition}

It is not difficult to prove that $b_\mu$ is Hermitian and that $t$ is an isometry of $b_\mu$.
To show that $b_\mu$ is non-singular, note that $b_\mu(x,y) = \mu( (t-t^{-1}) x,y)$ and use the fact that $(t \pm 1)$ induces an automorphism of~$H$.

\begin{remark}
\label{rem:SymmetrizationSingular}
If we allow the Hermitian form $b$ of an isometric structure $(H,b,t)$ to be singular, then symmetrization can be defined for \emph{any} skew-isometric form; we need not assume the absence of $(t \pm 1)$-primary summands.
Moreover, the signature~$\sign(b_\mu)$ is a well defined invariant of $(H,\mu,t)$ and so can be employed as a knot invariant via Milnor pairings~\cite{MilnorInfiniteCyclic, KawauchiSignature}. 
\end{remark}

Next, we describe some examples of the symmetrization process.
\begin{example}\label{ex:SymmetrizationExamples}
We consider the symmetrizations and signatures of the skew-isometric structures defined in 
Examples~\ref{ex:Elementarye1R} and~\ref{ex:Elementarye1C} for~$\xi \neq \pm 1$:
\begin{enumerate}
\item The symmetrization of the skew-isometric structure $\mathbf{e}(1,1,\xi,\R)$ is
\[ 
	\operatorname{Sym}\mathbf{e}(1,1,\xi,\R)=\left(\R^2,
	\begin{pmatrix}
	-2&2\operatorname{Re}(\xi) \\ 2\operatorname{Re}(\xi)&-2
	\end{pmatrix},
	\begin{pmatrix} 0&-1 \\ 1&2\operatorname{Re}(\xi) \end{pmatrix} \right).
\]
We deduce that the signature of $\mathbf{e}(1,1,\xi,\R)$ is
\[ \sign \mathbf{e}(1,1,\xi,\R)=-2. \]
\item The symmetrization of the skew-isometric structure $\mathbf{e}(1,1,\xi,\C)$ is
\[ \operatorname{Sym}\mathbf{e}(1,1,\xi,\C)=\left(\C,
(-\operatorname{sign}(\operatorname{Im}(\xi)) \cdot 4 \operatorname{Im}(\xi)^2),
(\xi) \right).\]
We deduce that the signature of $\mathbf{e}(1,1,\xi,\C)$ is
\[ \sign \mathbf{e}(1,1,\xi,\C)=-\operatorname{sign}(\operatorname{Im}(\xi)). \]
\end{enumerate}
\end{example}

Two $\varepsilon$-isometric structures~$(H_1,\mu_1,t_1)$ and $(H_2,\mu_2,t_2)$ are \emph{isomorphic} if there is an~$\F$-isomorphism $f \colon H_1 \to H_2$ 
with $\mu_2(f(x),f(y))=\mu_1(x,y)$ and~$t_2 \circ f=f \circ t_1$.
The set of isomorphism classes of $\varepsilon$-isometric structures over $\F$ is a commutative monoid under the direct sum.

As the isometric structure coming from a twisted Milnor pairing can have~$(t\pm 1)$-primary summands, we do not apply the symmetrization directly, but first consider its primary decomposition. After doing so, we see that symmetrization actually descends to Witt groups for the appropriate primary summands.

\subsection{Signatures of skew-isometric structures}
\label{sub:WittSkewIsom}
In this subsection, we use the primary decomposition to extract signature invariants from the symmetrization of~$(H,\mu,t)$.
Background references on the topic include~\cite{LitherlandCobordism, RanickiHighDimensionalKnotTheory}.
\medbreak
A Laurent polynomial~$p \in \F[t^{\pm 1}]$ is \emph{weakly symmetric} if $p \stackrel{.}{=} \ol p$. 
Let $(H, \mu, t)$ be an $\varepsilon$-isometric structure, and $p$ a weakly symmetric and irreducible polynomial in~$\LF$.
Denote by~$H_p$ the $p$-primary summand of $H$. 
The restriction~$\mu|_{H_p}$ is again a non-singular $\varepsilon$-Hermitian structure, and $t|_{H_p}$ is an isometry of $H_p$. Consequently, the $p$-\emph{primary part} $(H_p, \mu|_{H_p}, t_{H_p})$ is again an $\varepsilon$--isometric structure.

We describe frequently occurring examples of weakly symmetric and irreducible polynomials.
For $\F=\R$, set $p_\xi(t):=t-2\operatorname{Re}(\xi)+t^{-1}$, with~$\xi \in S^1 \cap \lbrace \operatorname{Im}(z)>0 \rbrace$, while for~$\F=\C$, set $p_\xi(t):=t-\xi$ with $\xi \in S^1$.
These polynomials are weakly symmetric and irreducible, and are referred to as \emph{basic polynomials}.

If $p_\xi \neq t\pm 1$, then $t\pm 1$ acts by isomorphisms on the primary summand $H_{p_\xi}$ and we can therefore consider the symmetrization of $(H_{p_\xi}, \mu|_{H_{p_\xi}}, t_{H_{p_\xi}})$.  

\begin{definition}
\label{def:MilnorSignature}
Given $\xi \in S^1 \setminus \lbrace \pm 1 \rbrace$, the \emph{Milnor $\xi$-signature}~$\sigma_{(H,\mu,t)}(\xi)$ of a skew-isometric structure $(H,\mu,t)$ is the signature of its $p_\xi$-primary part $(H_{p_\xi}, \mu|_{H_{p_\xi}}, t_{H_{p_\xi}})$: 
\[ \sigma_{(H,\mu,t)}(\xi)=\sign (b_{(\mu|_{H_{p_\xi}})}). \]
\end{definition}

The next remark comments on the reason for which we excluded $\xi =\pm 1$ and describes an equivalent definition of the Milnor $\xi$-signature~\cite{MilnorInfiniteCyclic,KirkLivingston}.
\begin{remark}
\label{rem:AlternativeMilnorSignature}
Since symmetrization commutes with the canonical projection to primary summands, $\sigma_{(H,\mu,t)}(\xi)$ 
can also be defined by restricting the symmetrization of the skew-Hermitian form $\mu$ to the $p_\xi$-primary summand $H_{p_\xi}$ of $H$:
	\[ \sigma_{(H,\mu,t)}(\xi) = \sign (b_\mu|_{H_{p_\xi}}). \]
Using Remark~\ref{rem:SymmetrizationSingular}, note that this definition can also be made for $\xi = \pm 1$.
In this case however, the Hermitian $b_\mu$ form might be singular.
\end{remark}
\begin{remark}
As $H$ has finite $\F$-dimension, for all but finitely many~$\xi \in S^1 \sm \{\pm 1\}$ the subspace~$H_{p_\xi}$ is trivial and the Milnor $\xi$--signature is $0$.
In fact, if $\xi$ is not a root of $\op{ord}_{\LF}H$, then the Milnor $\xi$--signature is $0$.
\end{remark}

Next, we introduce the Witt group of $\varepsilon$-isometric forms and show that the Milnor $\xi$-signature is well defined on Witt classes.
For an $\varepsilon$-isometric structure~$(H,\mu,t)$ over $\F$, define the following: a subspace~$L \subset H$ is \emph{invariant} if $t(L) = L$;
a \emph{metabolizer} is an invariant subspace~$L$ such that $L=L^\perp$ with respect to $\mu$;
an~$\varepsilon$-isometric structure is \emph{metabolic} if $H$ contains a \emph{metabolizer}. 
The set of isomorphism classes of metabolic $\varepsilon$-isometric structures forms a submonoid of the monoid of isomorphism classes of $\varepsilon$-isometric structures. 

\begin{definition}
\label{def:Isometric}
Given a field $\F$, the \emph{Witt group} $W_\varepsilon\operatorname{Aut}(\F)$ of $\varepsilon$-isometric structures is the quotient of the monoid of isomorphism classes of $\varepsilon$-isometric structures by the submonoid of isomorphism classes of metabolic $\varepsilon$-isometric structures.
\end{definition}

In Ranicki's monograph on high dimensional knot theory, this Witt group is denoted by~$L\operatorname{Aut}^0(\F,\varepsilon)$~\cite[Section 28.D]{RanickiHighDimensionalKnotTheory}.
$W_\varepsilon\operatorname{Aut}(\F)$ is known to be an abelian group: the inverse of $[(H,\mu,t)]$ is represented by $(H,-\mu,t)$.

{Given an irreducible weakly symmetric polynomial $p \in \LF$,} use $W_\varepsilon\operatorname{Aut}(\F,p)$ to denote the Witt group of $\varepsilon$-isometric structures defined on $p$-primary modules. 
Since $\LF$ is a PID, the primary decomposition of torsion $\LF$-modules leads to the following decomposition on the level of Witt groups~\cite[Appendix~A]{LitherlandCobordism}.
\begin{proposition}
\label{prop:PrimaryDecomposition}
Let $\mathcal{S}$ denote the set of irreducible weakly symmetric polynomials over $\LF$. For each $p \in \mathcal{S}$, the map $\pi_p \colon W_\varepsilon\operatorname{Aut}(\F) \to  W_\varepsilon\operatorname{Aut}(\F,p)$ that sends~$[(H,\mu, t)] \mapsto [(H_p,\mu|_{H_p}, t_{H_p})]$ is well defined. The collection of these maps induces an isomorphism
\[ W_\varepsilon\operatorname{Aut}(\F) \cong \bigoplus_{p \in \mathcal{S}} W_\varepsilon\operatorname{Aut}(\F,p). \]
\end{proposition}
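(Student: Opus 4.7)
The plan is to follow the classical orthogonal decomposition strategy, working piece by piece on primary summands, where the principal computation is that $t$ being an isometry of $\mu$ translates a polynomial in $t$ acting on the first argument into the $\LF$-involution applied to that polynomial acting on the second argument.

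First I would establish the \emph{orthogonality principle}: for every $q \in \LF$ and $x,y \in H$,
\[ \mu(q(t)\,x, y) = \mu(x, \bar q(t)\, y), \]
which follows from $\mu(tx,y)=\mu(x,t^{-1}y)$ by $\F$-sesquilinearity. Consequently, if $p, q$ are irreducible elements of $\LF$ such that both $p$ and $\bar p$ are coprime to $q$, then $H_p$ and $H_q$ are $\mu$-orthogonal: if $p^n x = 0$ then $0 = \mu(p^n x, y) = \mu(x, \bar p^{\,n} y)$, and since $\bar p^{\,n}$ acts as an automorphism on $H_q$ we can write any $z \in H_q$ as $\bar p^{\,n} y$, giving $\mu(x, z) = 0$. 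In particular, distinct weakly symmetric primary summands are mutually orthogonal.

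Next I would treat the non-weakly-symmetric primes. If $p$ is irreducible but not weakly symmetric, then $\bar p$ is a non-associate irreducible, and the orthogonality principle applied with $q = p$ shows that $H_p$ is totally isotropic in $H_p \oplus H_{\bar p}$. Since $\mu$ restricts to a non-singular form on the invariant subspace $H_p \oplus H_{\bar p}$ (thanks to its orthogonality to every other primary summand and the non-singularity of $\mu$), a dimension count forces $\dim H_p = \dim H_{\bar p}$ and $H_p = H_p^\perp$ inside $H_p \oplus H_{\bar p}$. As $H_p$ is manifestly $t$-invariant, it is a metabolizer, so the $\varepsilon$-isometric structure on $H_p \oplus H_{\bar p}$ vanishes in $W_\varepsilon\op{Aut}(\F)$.

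Combining these two facts, every class $[(H,\mu,t)]$ is Witt-equivalent to $\bigoplus_{p \in \mathcal S}[(H_p, \mu|_{H_p}, t|_{H_p})]$, which shows that the collection of maps $\pi_p$ is surjective onto $\bigoplus_{p \in \mathcal S} W_\varepsilon\op{Aut}(\F,p)$ with an obvious inverse given by orthogonal direct sum. What remains is well-definedness: if $L \subset H$ is a metabolizer, I need $L_p := L \cap H_p$ to be a metabolizer for $(H_p, \mu|_{H_p}, t|_{H_p})$ for every weakly symmetric $p \in \mathcal S$. Here $L$ is a $t$-invariant $\F$-subspace, hence a finitely generated torsion $\LF$-submodule, so $L = \bigoplus_p L_p$ by the structure theorem; the inclusion $L_p \subseteq L_p^\perp$ (within $H_p$) is immediate from $L \subseteq L^\perp$, and the reverse inclusion follows from $L = L^\perp$ together with the orthogonality of distinct weakly symmetric primary summands, which forces $L^\perp \cap H_p = L_p^\perp$.

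The main obstacle in this plan is the bookkeeping around the $\LF$-involution in the orthogonality principle — in particular, keeping track of the sign/involution when passing polynomials across the pairing — and ensuring that the $\pi_p$ are well-defined, which is where the primary decomposition of the metabolizer $L$ plays its essential role. Once this is in place, both the direct sum inverse and the surjectivity/injectivity of $\bigoplus_p \pi_p$ follow formally.
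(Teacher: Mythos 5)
The paper does not give a proof of this proposition — it is cited to~\cite[Appendix~A]{LitherlandCobordism} — so there is no internal proof to compare against. Your argument is the standard one and is essentially correct: sesquilinearity and the isometry of $t$ yield $\mu(q(t)x,y) = \mu(x,\bar q(t)y)$; this forces orthogonality of primary summands, metabolicity of $H_p \oplus H_{\bar p}$ for non--weakly-symmetric $p$, and the decomposition $L = \bigoplus_p L_p$ of a metabolizer gives well-definedness.

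One small wrinkle: the orthogonality statement you record asks that \emph{both} $p$ and $\bar p$ be coprime to $q$, but you then invoke it ``with $q=p$'' to show $H_p$ is totally isotropic when $p$ is not weakly symmetric — and of course $p$ is not coprime to itself, so the stated hypothesis literally fails. What the calculation actually needs is only that $\bar p$ be coprime to $q$: from $p^n x = 0$ one gets $0 = \mu(x,\bar p^{\,n} z)$ for all $z$, and if $\bar p^{\,n}$ is invertible on $H_q$ this gives $\mu(x,H_q)=0$ (orthogonality in the other slot then follows from $\varepsilon$-Hermitian symmetry). Stating the orthogonality principle with this weaker hypothesis makes the total isotropy of $H_p$ for non-weakly-symmetric $p$ an honest application rather than an analogy. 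With that rephrasing the whole argument is clean.
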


Next, we observe that symmetrization carries metabolisers to metabolisers.
\begin{lemma} \label{lem:SymmetrizationWitt}
Let $(H,\mu,t)$ be a skew-isometric structure with no $(t \pm 1)$-primary summands.
If $L \subset H$ is a metabolizer, then $L$ is also a metabolizer of~$\operatorname{Sym}(H,\mu,t)$.
\end{lemma}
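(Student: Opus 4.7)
The plan is to verify the two defining properties of a metaboliser for $b_\mu$ using the corresponding properties for $\mu$. The $t$-invariance of $L$ is given and transfers for free, since the underlying automorphism is the same in both structures. So the content of the lemma is the equality $L = L^{\perp_{b_\mu}}$.

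First, I would establish the inclusion $L \subseteq L^{\perp_{b_\mu}}$ by a direct one-line computation: for $x,y \in L$, using $t$-invariance of $L$,
\[
b_\mu(x,y) = \mu(tx,y) - \mu(x,ty)
\]
and both terms vanish because $tx, ty \in L = L^{\perp_\mu}$.

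The reverse inclusion is a dimension count. The form $\mu$ is non-singular, so $L = L^{\perp_\mu}$ forces $\dim_\F L = \tfrac12 \dim_\F H$. The form $b_\mu$ is also non-singular: this is exactly the point at which the hypothesis on $(t \pm 1)$-primary summands is used, and it is precisely the observation made immediately after Definition~\ref{def:Symmetrization} that $b_\mu(x,y) = \mu((t - t^{-1})x, y)$ with $t\pm 1$ acting as an automorphism. Consequently $\dim_\F L^{\perp_{b_\mu}} = \dim_\F H - \dim_\F L = \dim_\F L$, and combined with $L \subseteq L^{\perp_{b_\mu}}$ we obtain $L = L^{\perp_{b_\mu}}$.

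There is essentially no obstacle; the only thing to be careful about is invoking non-singularity of $b_\mu$, which is guaranteed precisely by the standing hypothesis that $H$ has no $(t\pm 1)$-primary summand. Without that hypothesis, $b_\mu$ could be singular and the dimension count would fail.
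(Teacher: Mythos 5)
Your proof is correct. The paper in fact omits a proof of this lemma entirely (it is stated and immediately followed by Corollary~\ref{cor:SymmetrizationPrimary}), so there is no textual proof to compare against; your argument is the natural one and fills the gap. The two steps — showing $L \subseteq L^{\perp_{b_\mu}}$ directly using $t$-invariance and $L = L^{\perp_\mu}$, then upgrading to equality by the dimension count $\dim_\F L = \tfrac{1}{2}\dim_\F H = \dim_\F L^{\perp_{b_\mu}}$ — are exactly what is needed, and you correctly identify that non-singularity of $b_\mu$ (hence the hypothesis on $(t\pm1)$-primary summands, via $b_\mu(x,y) = \mu((t-t^{-1})x,y)$) is the one nontrivial input to the dimension count.
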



As a corollary, we deduce that symmetrization descends to Witt groups.
\begin{corollary} \label{cor:SymmetrizationPrimary}
Let $p \in \mathcal{S}$ be a weakly symmetric irreducible polynomial.
If~$p \neq t \pm 1$, then symmetrization descends to a well defined homomorphism
\[ \operatorname{Sym} \colon W_-\operatorname{Aut}(\F,p) \to W\operatorname{Aut}(\F,p).\]
\end{corollary}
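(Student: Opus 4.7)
The plan is to verify in turn that symmetrization is well defined on isomorphism classes, lands in $W\operatorname{Aut}(\F,p)$, is additive, and respects the metabolic relation.

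First I would check that $\operatorname{Sym}(H,\mu,t)=(H,b_\mu,t)$ is an honest object of $W\operatorname{Aut}(\F,p)$ whenever $(H,\mu,t)$ represents a class in $W_-\operatorname{Aut}(\F,p)$. Since $H$ is $p$-primary and the underlying $\F$-vector space and the isometry $t$ are unchanged, there is nothing to check on either of these two pieces of data. The Hermitian form $b_\mu$ is already known to be Hermitian and to be preserved by $t$ (the remarks following Definition~\ref{def:Symmetrization}). Non-singularity of $b_\mu$ is exactly the place where the hypothesis $p \neq t \pm 1$ enters: the identity $b_\mu(x,y)=\mu((t-t^{-1})x,y)$ reduces the question to showing that $t - t^{-1}$, equivalently $(t-1)(t+1)$, acts by an $\F$-automorphism of $H$. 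Since $p$ is irreducible and weakly symmetric and distinct from $t\pm 1$, it is coprime to both $t-1$ and $t+1$ in $\Lambda=\F[t^{\pm 1}]$, so $t \pm 1$ act invertibly on the $p$-primary module $H$. Hence $b_\mu$ is non-singular and $\operatorname{Sym}(H,\mu,t) \in W\operatorname{Aut}(\F,p)$.

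Next I would check functoriality and additivity at the level of monoids. An isomorphism $f \colon (H_1,\mu_1,t_1) \to (H_2,\mu_2,t_2)$ of skew-isometric structures satisfies $t_2 f = f t_1$ and $\mu_2(fx,fy)=\mu_1(x,y)$, from which a two-line computation gives $b_{\mu_2}(fx,fy)=b_{\mu_1}(x,y)$, so $f$ is also an isomorphism of the symmetrizations. Additivity $\operatorname{Sym}\bigl((H,\mu,t) \oplus (H',\mu',t')\bigr) \cong \operatorname{Sym}(H,\mu,t) \oplus \operatorname{Sym}(H',\mu',t')$ is immediate from the definition, since $(\mu\oplus\mu')(t\oplus t')-\ldots$ splits as a direct sum. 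Thus $\operatorname{Sym}$ defines a monoid homomorphism from isomorphism classes of $p$-primary skew-isometric structures to isomorphism classes of $p$-primary isometric structures.

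Finally, and this is the heart of the statement, I would use Lemma~\ref{lem:SymmetrizationWitt} to conclude that $\operatorname{Sym}$ sends the submonoid of metabolic skew-isometric structures into the submonoid of metabolic isometric structures. The lemma applies because $p \neq t \pm 1$ guarantees there is no $(t\pm 1)$-primary summand, exactly the hypothesis of that lemma. Combining the three steps, the induced map on the quotient monoids is well defined, and since $W_-\operatorname{Aut}(\F,p)$ is a group, it is automatically a group homomorphism to $W\operatorname{Aut}(\F,p)$. The main conceptual obstacle has already been cleared by Lemma~\ref{lem:SymmetrizationWitt}; what remains is essentially the bookkeeping just outlined, with the non-singularity of $b_\mu$ being the only point where the exclusion $p \neq t \pm 1$ is genuinely used.
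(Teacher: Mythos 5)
Your proof is correct and follows exactly the route the paper intends: the paper states the corollary as an immediate consequence of Lemma~\ref{lem:SymmetrizationWitt}, leaving the remaining verifications (non-singularity of $b_\mu$ via $b_\mu(x,y)=\mu((t-t^{-1})x,y)$, compatibility with isomorphisms and direct sums) implicit, and you have spelled these out accurately, correctly pinpointing that the hypothesis $p\neq t\pm 1$ is used only for the invertibility of $t-t^{-1}$ on the $p$-primary module.
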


In the next corollary, we use $\operatorname{forget}$ to denote the map that sends an isometric structure to its underlying Hermitian form.
Observe that this map descends to a homomorphism $\operatorname{forget} \colon W\operatorname{Aut}(\F,p_\xi) \to  W(\F)$ on the level of Witt groups.
Using Corollary~\ref{cor:SymmetrizationPrimary}, we obtain the following reformulation of the Milnor signatures.

\begin{corollary}\label{cor:SymmSignature}
Let $p_\xi \neq t \pm 1 \in \LF$ be a basic polynomial.
The Milnor $\xi$-signature~$\sigma_{(H,\mu,t)}(\xi)$ agrees with the following composition:
\[
\sigma(\xi) \colon 
W_-\operatorname{Aut}(\F) 
\xrightarrow{\pi_{p_\xi}} W_-\operatorname{Aut}(\F,p_\xi) 
\xrightarrow{\operatorname{Sym}} W\operatorname{Aut}(\F,p_\xi) 
\xrightarrow{\operatorname{forget}} W(\F)  \xrightarrow{\sign} \Z. \]
In particular, $\sigma_{(H,\mu,t)}(\xi)$ depends only on the Witt class of $(H,\mu,t)$.
\end{corollary}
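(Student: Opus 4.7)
The plan is to verify that the composition $\sigma(\xi)$ is well defined on Witt groups and then to trace an element $[(H,\mu,t)]$ through the composition and check that the end result matches Definition~\ref{def:MilnorSignature}. This is essentially a bookkeeping argument: all of the non-trivial algebraic content has been handled by Proposition~\ref{prop:PrimaryDecomposition} and Corollary~\ref{cor:SymmetrizationPrimary}.

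First, I would argue that each of the four maps in the composition is well defined. The primary projection $\pi_{p_\xi}$ descends to a homomorphism of Witt groups by Proposition~\ref{prop:PrimaryDecomposition}. The symmetrization map $\operatorname{Sym}$ descends to a homomorphism $W_-\operatorname{Aut}(\F,p_\xi) \to W\operatorname{Aut}(\F,p_\xi)$ by Corollary~\ref{cor:SymmetrizationPrimary}, and this is exactly where the hypothesis $p_\xi \neq t \pm 1$ is used, ensuring that $t - t^{-1}$ acts as an isomorphism on $H_{p_\xi}$ so that $b_{\mu|_{H_{p_\xi}}}$ is non-singular and that Lemma~\ref{lem:SymmetrizationWitt} applies. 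The forgetful map is clearly well defined on Witt classes, since a $t$-invariant metabolizer of an isometric structure is, in particular, a metabolizer of the underlying Hermitian form. Finally, the signature is the classical Witt homomorphism $W(\F) \to \Z$.

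Next, I would trace through the composition on a representative. Starting with $[(H,\mu,t)]$, the image under $\pi_{p_\xi}$ is $[(H_{p_\xi}, \mu|_{H_{p_\xi}}, t|_{H_{p_\xi}})]$ by definition. Applying $\operatorname{Sym}$ yields the class of the isometric structure $(H_{p_\xi}, b_{\mu|_{H_{p_\xi}}}, t|_{H_{p_\xi}})$, where $b_{\mu|_{H_{p_\xi}}}$ is the Hermitian symmetrization from Definition~\ref{def:Symmetrization}. The forgetful map returns the Hermitian form $(H_{p_\xi}, b_{\mu|_{H_{p_\xi}}})$, and its signature is $\sign(b_{\mu|_{H_{p_\xi}}})$, which is exactly the Milnor $\xi$-signature $\sigma_{(H,\mu,t)}(\xi)$ as given in Definition~\ref{def:MilnorSignature}. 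The ``in particular'' assertion that $\sigma_{(H,\mu,t)}(\xi)$ depends only on the Witt class of $(H,\mu,t)$ is then immediate, since it is the composition of four Witt-invariant maps.

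There is no real obstacle here: the only subtle point is making sure the hypothesis $p_\xi \neq t \pm 1$ is used exactly once, namely in invoking Corollary~\ref{cor:SymmetrizationPrimary} so that symmetrization lands in an \emph{honest} Witt group of non-singular Hermitian forms. I would also briefly remark that, in light of Remark~\ref{rem:AlternativeMilnorSignature}, one could equivalently symmetrize first and then project, since restriction to primary summands commutes with the formula $b_\mu(x,y) = \mu(tx,y) - \mu(x,ty)$; this interchange of order provides a small sanity check but is not needed for the proof.
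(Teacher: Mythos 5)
Your proof is correct and matches the paper's (implicit) argument: the paper states this corollary without a proof precisely because it follows immediately from Proposition~\ref{prop:PrimaryDecomposition}, Corollary~\ref{cor:SymmetrizationPrimary}, and the remark preceding the statement that $\operatorname{forget}$ descends to Witt groups, exactly as you trace through. Your unwinding of the composition against Definition~\ref{def:MilnorSignature} and your identification of where the hypothesis $p_\xi \neq t\pm 1$ is used are both accurate.
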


\section{Milnor signatures and signatures jumps}
\label{sec:MilnorBlanchfieldSignatures}

In this final section, we use the trace-relation between the Milnor pairing and the Blanchfield pairing that was established in Theorem~\ref{thm:BlanchfieldMilnor} in order to relate the Milnor signatures of Kirk-Livingston to the more computable signature jumps of~\cite{BorodzikConwayPolitarczyk}. 
In Subsection~\ref{sub:WittLinkingForm}, we review some facts about the Witt group of linking forms.
In Subsection~\ref{sub:RelationToJump}, we relate the signatures of skew-isometric structures (described in Section~\ref{sec:Isometric}) to signature jumps of linking forms.

\subsection{The Witt group of linking forms and signature jumps}
\label{sub:WittLinkingForm}
We review some facts about the Witt group $W(\F(t),\LF)$ of linking forms over $\LF$, referring to~\cite[Appendix A]{LitherlandCobordism} and~\cite[Section 4]{BorodzikConwayPolitarczyk} for further details.
\medbreak
Given an irreducible polynomial $p$ in $\LF$, we denote 
by~$W(\F(t),\LF,p)$ the Witt group of linking forms defined over $p$-primary modules.
As $\LF$ is a PID, the Witt group $W(\F(t),\LF)$ is known to decompose as follows~\cite[Appendix~A]{LitherlandCobordism}.
\begin{proposition}
\label{prop:Primary}
Let $\mathcal{S}$ denote the set of irreducible weakly symmetric polynomials over $\LF$. The primary decomposition induces the isomorphism
\[ W(\F(t),\LF)\cong\bigoplus_{p \in \mathcal{S}} W(\F(t),\LF,p).\]
\end{proposition}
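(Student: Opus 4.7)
The plan is to establish the isomorphism via the standard primary decomposition argument for torsion modules over a PID, and then check that all constructions descend to Witt groups. First, I would note that given any non-singular linking form $(H,\lambda)$, the $\LF$-torsion module $H$ decomposes uniquely as $H = \bigoplus_q H_q$, where $q$ ranges over all irreducible polynomials in $\LF$ and $H_q$ is the $q$-primary summand. The key elementary observation is that for distinct irreducible polynomials $q_1,q_2$, the summands $H_{q_1}$ and $H_{q_2}$ are orthogonal with respect to $\lambda$: indeed, if $x \in H_{q_1}$ is killed by $q_1^k$ and $y \in H_{q_2}$ is killed by $q_2^\ell$, then $\lambda(x,y) \in \F(t)/\LF$ is annihilated by both $q_1^k$ and $q_2^\ell$, which are coprime, forcing $\lambda(x,y)=0$.

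Second, I would show that non-weakly-symmetric primes contribute trivially. If $q$ is irreducible and $q \not\stackrel{.}{=} \overline{q}$, then by sesquilinearity the map $\lambda$ sends $H_q \times H_q$ into $(\F(t)/\LF)_{\overline{q}}$; since $H_q$ is $q$-primary while the target is $\overline{q}$-primary, $\lambda|_{H_q \times H_q}$ vanishes. Consequently, on the bigger summand $H_q \oplus H_{\overline{q}}$, the subspace $H_q$ is isotropic, and non-singularity of $\lambda$ on this summand forces $H_q = H_q^\perp$, exhibiting $(H_q \oplus H_{\overline{q}}, \lambda|_{\cdot})$ as metabolic. Therefore in $W(\F(t),\LF)$ only the weakly symmetric primary parts survive, giving a well-defined map
\[
\Phi \colon W(\F(t),\LF) \to \bigoplus_{p \in \mathcal{S}} W(\F(t),\LF,p),\qquad [(H,\lambda)] \mapsto \bigl([(H_p, \lambda|_{H_p})]\bigr)_{p \in \mathcal{S}}.
\]

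Third, I would verify that $\Phi$ is well-defined on Witt classes: if $L \subset H$ is a metaboliser of $\lambda$, then $L_p := L \cap H_p$ is a metaboliser of $\lambda|_{H_p}$. The inclusion $L_p \subset L_p^\perp$ is immediate, and the reverse inclusion follows from the fact that the orthogonality decomposition $H = \bigoplus_q H_q$ is respected by the perpendicular operation, so $L^\perp = \bigoplus_q (L \cap H_q)^\perp$ inside each primary component. The inverse map $\Psi$ is the direct sum, which is visibly additive and sends metabolic tuples to metabolic forms, so it descends to Witt groups. The compositions $\Phi \circ \Psi$ and $\Psi \circ \Phi$ are the identity on the nose at the level of representatives, which concludes the proof.

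The main obstacle will be the last bookkeeping step, namely checking cleanly that $L_p = L \cap H_p$ really does give a metaboliser of $\lambda|_{H_p}$; once this is established, the rest is formal. A useful reference for this exact argument is Litherland's appendix~\cite[Appendix A]{LitherlandCobordism}, which handles the analogous statement and whose proof transfers verbatim.
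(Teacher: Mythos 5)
Your proposal reconstructs, in detail, the standard argument that the paper itself does not spell out: the paper merely cites Litherland's Appendix A for this fact, so your proof supplies the substance of what the authors defer to the reference.

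There is, however, a genuine error in the orthogonality claim of your first paragraph. Because $\lambda$ is a sesquilinear Hermitian linking form, for $x \in H_{q_1}$ killed by $q_1^k$ and $y \in H_{q_2}$ killed by $q_2^{\ell}$ one has
\[
q_1^k\,\lambda(x,y) = \lambda(q_1^k x, y) = 0
\quad\text{and}\quad
\overline{q_2}^{\,\ell}\,\lambda(x,y) = \lambda(x, q_2^{\ell} y) = 0,
\]
so $\lambda(x,y)$ is annihilated by $q_1^k$ and by $\overline{q_2}^{\,\ell}$ — \emph{not} by $q_2^{\ell}$ as you wrote. Consequently orthogonality of $H_{q_1}$ and $H_{q_2}$ holds precisely when $q_1$ is not an associate of $\overline{q_2}$. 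For a non-weakly-symmetric prime $q$, the summands $H_q$ and $H_{\overline{q}}$ are emphatically \emph{not} orthogonal to each other; indeed your own second paragraph correctly shows $\lambda|_{H_q \times H_q} = 0$ and, by the same coprimality argument, $\lambda|_{H_{\overline{q}} \times H_{\overline{q}}} = 0$, so were $H_q$ also orthogonal to $H_{\overline{q}}$ then non-singularity of $\lambda$ would force both primary summands to vanish. As stated, your paragraph one contradicts your paragraph two. The correct orthogonal decomposition is indexed by weakly symmetric primes $p$ together with unordered pairs $\{q,\overline{q}\}$ with $q \not\stackrel{.}{=} \overline{q}$; on such a pair the block $(H_q \oplus H_{\overline{q}}, \lambda)$ is non-singular, $H_q$ is isotropic of half rank, and your metabolic argument kicks in. Once this is repaired the rest of your argument — including the check that a metabolizer $L$ restricts to a metabolizer $L\cap H_p$ on each weakly symmetric primary summand, which uses exactly this corrected orthogonality — goes through as intended.
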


For later use, we describe some linking forms which turn out to be explicit generators of these  primary summands of $W(\F(t),\LF)$~\cite[Sections~2 and~4]{BorodzikConwayPolitarczyk}.
\begin{example}
\label{ex:BasicPairingXiS1RealComplex}
We recall two examples of linking forms from~\cite{BorodzikConwayPolitarczyk}.
\begin{enumerate}
\item Fix a complex number $\xi \in S^1$ with $\re (\xi)>0$, a positive integer $n$, and $\varepsilon= \pm 1$.
We associate to the basic polynomial $p_\xi(t):= (t-\xi)(1-\overline{\xi}t^{-1})$ the linking form~$\mathfrak{e}(n,\varepsilon,\xi,\R)$ defined on~$\LR/p_\xi(t)^n$ by the formula
\begin{equation}\label{eq:e_n_k_form_real}
\begin{split}
  \LR / p_\xi(t)^n \times\LR / p_\xi(t)^n& \to\R(t)/\LR \\
  (x,y) &\mapsto \frac{\varepsilon x \ol y}{p_\xi(t)^{n}}. 
\end{split}
\end{equation}
\item Fix a complex number $\xi\in S^1$ and a positive integer $n$.
We associate to the basic polynomial~$p_{\xi}(t):= t-\xi$ the linking form $\mathfrak{e}(n,\varepsilon,\xi,\C)$:
\begin{align*}
  \LC / p_{\xi}(t)^n \times \LC / p_{\xi}(t)^n &\to \C(t)/\LC,\\
  (x,y)  &\mapsto \frac{\varepsilon x \ol y}{p_{\xi}(t)^{\frac{n}{2}}p_{\bar{\xi}}(t^{-1})^{\frac{n}{2}}} &&\text{if $n$ is even}, \\
 (x,y)   &\mapsto \frac{\operatorname{sgn}(\operatorname{Im}(\xi)) \varepsilon (1-\xi t) x \ol y}{p_{\xi}(t)^{\frac{n+1}{2}}p_{\bar{\xi}}(t^{-1})^{\frac{n-1}{2}}}     &&\text{if $n$ is odd.}
\end{align*}
\end{enumerate}
\end{example}

We review dévissage
for linking forms on $p$-torsion modules in detail, before 
giving the general statement in Proposition~\ref{prop:Devissage} below.
\begin{construction}\label{const:pTorsionForm}
Let $p \in \LF$ be a weakly symmetric irreducible polynomial, 
and let $(H,\lambda)$ be a linking form, with $H$ a $p$--torsion $\LF$--module.
Consider the following $\LF$--submodule of $\F(t)/\LF$:
\begin{equation}\label{eq:Devissage}
	(\F(t)/\LF)_{/p}:=\lbrace [f] \in \F(t)/\LF  \ | \ f \in \F(t), \ pf  \in \LF \rbrace \subset \F(t)/\LF.
\end{equation}
It inherits the structure of a vector space over $R := \LF/p$, and multiplication by~$p$ defines an isomorphism $m_ p \colon (\F(t)/\LF)_{/p} \xrightarrow{\sim} R$ of vector spaces.  
Since $H$ is $p$-torsion, note that the form~$\lambda \colon H \times H \to \F(t)/\LF$ takes values in~$(\F(t)/\LF)_{/p}$. Similarly to $(\F(t)/\LF)_{/p}$ above, $H$ also inherits the structure of an $R$--vector space. Consequently, $(H, m_p \circ \lambda)$ is a form over $R$. 

A more detailed inspection reveals that this form is $\wt u$--Hermitian, where $\wt u \in R$ is obtained as follows: suppose $\ol p = up$ for a $u \in \LF$, which implies that $u \cdot \ol u = 1$. Denote the reduction of $u$ mod~$p$ by $\wt u$, and verify that $m_p \circ \lambda$ is $\wt{u}$-Hermitian:
\[ \ol{ (m_p \circ \lambda)(y,x) } = \ol p \cdot \ol{ \lambda(x,y)} = \wt u \cdot (m_p \circ \lambda)(x,y) .\]
\end{construction}

Construction~\ref{const:pTorsionForm} associates a Hermitian form~$(H, m_p \circ \lambda)$ to a linking form $(H,\lambda)$, when $H$ is a $p$-torsion $\LF$-module. The next proposition shows that this procedure is well defined on Witt groups and can be generalized from $p$-torsion modules to~$p$-primary modules. 
Use~$W_{\wt u}(R)$ to denote the Witt group of non-singular $\wt{u}$-Hermitian forms.
If $u=1$, then we write~$W(R)$ instead of~$W_1(R)$.

\begin{proposition}[d\'evissage]
\label{prop:Devissage}
Let $p$ be an irreducible weakly symmetric polynomial with $p=u \ol p$, and
denote the reduction mod~$p$ of $u$ by $\widetilde{u} \in \LF/p$. 
There is an isomorphism
\[ d \colon W(\F(t),\LF,p) \cong W_{\widetilde{u}}(\LF/p)\]
that sends $[H, \lambda]$ to $[H, m_p \circ \lambda]$, if $H$ is a $p$-torsion module; see Construction~\ref{const:pTorsionForm}.
\end{proposition}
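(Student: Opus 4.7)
My plan is to prove the proposition in two stages: first construct the isomorphism on the sub-monoid of $p$-torsion linking forms, and then use a sublagrangian reduction to show every class in $W(\F(t),\LF,p)$ is Witt-equivalent to a $p$-torsion representative.

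For the first stage, I would verify that Construction~\ref{const:pTorsionForm} gives a well-defined correspondence. Given a $p$-torsion linking form $(H,\lambda)$, the module $H$ is naturally an $R = \LF/p$-vector space, $m_p \circ \lambda$ takes values in $R$ via the isomorphism $m_p \colon (\F(t)/\LF)_{/p} \xrightarrow{\sim} R$, and the computation in Construction~\ref{const:pTorsionForm} shows that $m_p \circ \lambda$ is $\widetilde{u}$-Hermitian. Non-singularity is preserved because $m_p$ is an isomorphism of $R$-modules, so the adjoint of $m_p \circ \lambda$ factors as $m_p \circ (\text{adjoint of }\lambda)$. Conversely, any non-singular $\widetilde{u}$-Hermitian form $(V,b)$ over $R$ defines a $p$-torsion linking form $(V, m_p^{-1} \circ b)$, giving a strict inverse at the level of isomorphism classes. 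Crucially, since $m_p$ is a module isomorphism, an invariant subspace $L \subset H$ satisfies $L = L^\perp$ with respect to $\lambda$ if and only if it does with respect to $m_p \circ \lambda$; hence metabolic forms correspond to metabolic forms, and the assignment descends to a homomorphism on Witt classes with a two-sided inverse.

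For the second stage, I would show that every class in $W(\F(t),\LF,p)$ admits a $p$-torsion representative by iteratively reducing the order of annihilation. Given a $p$-primary form $(H,\lambda)$ with $p^n H = 0$ for some $n \geq 2$, set $k := \lceil n/2 \rceil$ and $L := p^k H$. A direct computation using $\ol p = u p$ gives
\[ \lambda(p^k x, p^k y) = u^k p^{2k} \lambda(x,y), \]
which vanishes in $\F(t)/\LF$ because $2k \geq n$, so $L \subset L^\perp$. Non-singularity of $\lambda$ and a length count (using that $H$ is a finitely generated torsion module over the PID $\LF$) then identify $L^\perp$ with $H[p^{n-k}]$, so the induced linking form on $L^\perp/L$ is non-singular and annihilated by $p^{n-k}$, which is a strictly smaller power of $p$. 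A standard Witt-group argument shows $[H,\lambda] = [L^\perp/L, \overline\lambda]$ (the subspace $L$ becomes a metabolizer in the orthogonal sum $(H,\lambda) \oplus (L^\perp/L, -\overline\lambda)$), so we may iterate until we arrive at a $p$-torsion representative.

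Combining the two stages defines the homomorphism $d$ on the whole of $W(\F(t),\LF,p)$, and the correspondence of metabolizers established in the first stage implies both injectivity and surjectivity. The main obstacle, and the only place that needs genuinely careful verification, is the sublagrangian reduction: one has to confirm that $L^\perp = H[p^{n-k}]$ under non-singularity, which requires the structure theorem for torsion modules over the PID $\LF$, and that the passage to $(L^\perp/L, \overline\lambda)$ really yields a Witt-equivalent class rather than merely a smaller form.
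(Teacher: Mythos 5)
The paper itself does not actually prove Proposition~\ref{prop:Devissage}: for the difficult step (showing every $p$-primary linking form is Witt-equivalent to a $p$-torsion one) it simply cites Bourrigan. Your two-stage plan — the $m_p$-correspondence on $p$-torsion forms followed by sublagrangian reduction to descend from $p$-primary to $p$-torsion — is the standard dévissage argument, and Stage~1 is correct as stated, including the observation that $m_p$ carries metabolizers to metabolizers.

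Stage~2, however, contains a computational error. For a non-singular linking form $\lambda$ and $L = p^k H$, the orthogonal complement is $L^\perp = H[p^k]$, \emph{not} $H[p^{n-k}]$: one has $y \in L^\perp$ iff $p^k\lambda(x,y)=0$ for all $x$, iff $\lambda(x,p^ky)=0$ for all $x$ (since $\overline{p} = u^{-1}p$ with $u$ a unit), iff $p^k y = 0$ by non-singularity. The length count gives the same answer, since $\ell(L^\perp) = \ell(H) - \ell(L) = \ell(H) - \ell(p^kH) = \ell(H[p^k])$. Consequently $L^\perp/L = H[p^k]/p^kH$ is annihilated by $p^k$, not by $p^{n-k}$; for odd $n$ these genuinely disagree — take $H = \LF/p^3$ with $n=3$, $k=2$: then $L^\perp = pH = H[p^2]$, whereas $H[p^{n-k}] = H[p] = p^2H$ is strictly smaller. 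The iteration still terminates because $k = \lceil n/2 \rceil < n$ for $n\geq 2$, but as written your quantitative claims are wrong and would need to be corrected for the reduction to be airtight. Two further minor slips: the unit in $\lambda(p^kx,p^ky) = p^k\overline{p}^k\lambda(x,y)$ is $u^{-k}$ rather than $u^k$ (immaterial for the vanishing), and the metabolizer of $(H,\lambda)\oplus(L^\perp/L,-\overline{\lambda})$ is the graph $\{(z,[z]) : z\in L^\perp\}$, not the subspace $L$ itself.
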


One of the more difficult aspects of Proposition~\ref{prop:Devissage} is to prove that if $(H,\lambda)$ is a linking form, where $H$ is $p$-primary, then $(H,\lambda)$ is Witt equivalent to $(H',\lambda')$ where~$H'$ is $p$-torsion; we refer to~\cite[Chapter II.B.2]{Bourrigan} for a proof.
\begin{remark}
\label{rem:Odd}
This latter observation can be used to show that the basic linking form $\mathfrak{e}(2n+1,\varepsilon,\xi,\F)$ is Witt equivalent to $\mathfrak{e}(1,\varepsilon,\xi,\F)$ and that $\mathfrak{e}(2n,\varepsilon,\xi,\F)$ is metabolic~\cite[Section 4]{BorodzikConwayPolitarczyk}.
\end{remark}

We describe d\'evissage explicitly on the elementary linking forms of~\cite{BorodzikConwayPolitarczyk}.
\begin{example}
\label{ex:Devissage}
We illustrate Proposition~\ref{prop:Devissage} on the linking forms of Example~\ref{ex:BasicPairingXiS1RealComplex}.
\begin{enumerate}
	\item We apply d\'evissage to $\lambda:=\mathfrak{e}(1,\varepsilon,\xi,\R)$. 
For $\F = \R$ the polynomial~$p_\xi(t)$ is symmetric, and so~$u=1$.
Note furthermore that $\LR / p_\xi = \C$.
Since Construction~\ref{const:pTorsionForm} implies that $d(\lambda)(x,y)= \frac{p_\xi \cdot x\overline{y}}{p_\xi} = x\overline{y}$, we deduce that~$d(\lambda)=(1)$.

\item We apply d\'evissage to $\lambda:=\mathfrak{e}(1,\varepsilon,\xi,\C)$.
The polynomial $p_\xi = (t-\xi)$ is $u$-symmetric with $u=-t^{-1}\overline{\xi}$, and so $\widetilde{u}=-\overline{\xi}^2$.
Note furthermore that $\LC/p \cong \C$.
Example~\ref{ex:BasicPairingXiS1RealComplex} and 
Construction~\ref{const:pTorsionForm} imply that 
\[ d(\lambda)(x,y)=\operatorname{sign(Im}(\xi))  \left[ x\overline{y}(t-\xi)\frac{1-\xi t}{t-\xi} \right]
=\operatorname{sign(Im}(\xi))   x\overline{y}(1-\xi^2).\]
We conclude  that~$d(\lambda)=\operatorname{sign(Im}(\xi)) (1-\xi^2)$.
\end{enumerate}
\end{example}

Using the primary decomposition theorem and dévissage, we can now introduce a~$\Z$-valued invariant on the Witt group~$W(\F(t),\LF)$ of linking forms.
Note that if~$\widetilde{u}=\eta \cdot \overline{\eta}^{-1}$ for $\eta \in R$, then multiplication by $\eta$ induces an isomorphism~$(\eta \cdot) \colon W_{\widetilde{u}}(R) \xrightarrow{\sim} W(R)$.
In particular, multiplication by $\eta=i\overline{\xi}$ induces the following isomorphism~$(\eta \cdot) \colon W_{-\overline{\xi}^2}(\C) \xrightarrow{\sim}  W(\C)$.
\begin{definition}
\label{defn:SignatureJumpBasic}
Let $\F=\R,\C$ and let $p_\xi$ be a basic polynomial. Set $\eta := 1$ if~$\F=\R$ and $\eta := i \ol \xi$ if $\F=\C$. Define the homomorphism~$(\delta \sigma)(\xi) \colon W(\F(t),\LF) \to \Z$ as the composition
\[
	\delta \sigma(\xi) \colon W(\F(t),\LF) \xrightarrow{\pi_{p_\xi}} W(\F(t),\LF,p_\xi) \xrightarrow{d}  W_{\widetilde{u}}(\C) \xrightarrow{\eta \cdot} W(\C) \xrightarrow{\sign} \Z,
\]
where $\pi_{p_\xi}$ denotes the canonical projection, and $d$ denotes the d\'evissage isomorphism of Proposition~\ref{prop:Devissage}.
\end{definition}

From now on, we assume that $\F=\R,\C$.
We compare the homomorphism $\delta \sigma (\xi)$ from Definition~\ref{defn:SignatureJumpBasic} with the signature jumps introduced in~\cite[Definition 5.1]{BorodzikConwayPolitarczyk}.
\begin{remark}
\label{rem:BCPSign}
Given a linking form $(H,\lambda)$ over $\LF$, we note that $\delta \sigma_{(H,\lambda)} (\xi)$ agrees with the signature jump at $\xi$ if $\F=\R$, and with its opposite when $\F=\C$.
As every linking form is Witt equivalent to a direct sum of $\mathfrak{e}(1,\varepsilon,\xi,\F)$~\cite[Theorem~4.7 and 4.10]{BorodzikConwayPolitarczyk}, this follows by combining~\cite[Definition 5.1]{BorodzikConwayPolitarczyk} with Lemma~\ref{lem:SignatureJumpBasic}~below.
\end{remark}

Next, we compute the values of~$\delta \sigma(\xi)$ on the linking forms from Example~\ref{ex:BasicPairingXiS1RealComplex}.
\begin{lemma}\label{lem:SignatureJumpBasic}
For $\xi \in S^1 \setminus \lbrace \pm 1\rbrace$, the linking form $\mathfrak{e}(1,1,\xi,\F)$ of Example~\ref{ex:BasicPairingXiS1RealComplex} satisfies 
	\[ (\delta \sigma_{\mathfrak{e}(1,1,\xi,\F)})(\xi) = 1 .\]
\end{lemma}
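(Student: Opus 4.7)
The plan is to unpack the definition of $\delta\sigma(\xi)$ step by step on the generator $\mathfrak{e}(1,1,\xi,\F)$, and to verify that each of the four maps in the composition behaves in the expected way. The key observation that lets us avoid any real work is that the underlying module $\LF/p_\xi$ of $\mathfrak{e}(1,1,\xi,\F)$ is already $p_\xi$-torsion, so the projection $\pi_{p_\xi}$ onto the $p_\xi$-primary summand acts as the identity on the relevant Witt class, and the dévissage isomorphism $d$ is given by the explicit formula from Construction~\ref{const:pTorsionForm}, which has already been computed in Example~\ref{ex:Devissage}.

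For $\F = \R$, Example~\ref{ex:Devissage}(1) directly gives $d([\mathfrak{e}(1,1,\xi,\R)]) = [(1)] \in W(\C)$, and since $\widetilde{u} = 1$ and $\eta = 1$ in this case, the composition terminates with $\operatorname{sign}([(1)]) = 1$, as required.

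For $\F = \C$, Example~\ref{ex:Devissage}(2) gives $d([\mathfrak{e}(1,1,\xi,\C)])$ as the class of the $(-\overline{\xi}^2)$-Hermitian rank-one form $(x,y) \mapsto \operatorname{sign}(\operatorname{Im}(\xi))(1-\xi^2) \, x\overline{y}$ over $\C$. To convert this into an ordinary Hermitian form I multiply by $\eta = i\overline{\xi}$, which provides the isomorphism $W_{-\overline{\xi}^2}(\C) \xrightarrow{\sim} W(\C)$ because $(i\overline{\xi})\cdot\overline{(i\overline{\xi})}^{-1} = -\overline{\xi}^2$. The only substantive verification is a short trigonometric identity: writing $\xi = e^{i\theta}$, one has $1-\xi^2 = -2i\sin(\theta)\,\xi$, so
\[ i\overline{\xi}\cdot \operatorname{sign}(\operatorname{Im}\xi)(1-\xi^2) = 2\operatorname{sign}(\sin\theta)\sin(\theta) = 2|\sin\theta|. \]
Since $\xi \neq \pm 1$, this real scalar is strictly positive, so the resulting Hermitian form represents the generator of $W(\C)$ with signature $+1$, completing the computation.

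There is no genuine obstacle here; the mild subtlety is just keeping track of the twist factor $\widetilde{u} = -\overline{\xi}^2$ when passing from $W_{\widetilde{u}}(\C)$ to $W(\C)$ and verifying that multiplication by $\eta = i\overline{\xi}$ is the correct scaling to use. Once that is recorded the statement reduces to the one-line trigonometric simplification above.
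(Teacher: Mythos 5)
Your proof is correct and follows essentially the same route as the paper: both cases are handled by invoking the dévissage computations from Example~\ref{ex:Devissage} and then, for $\F=\C$, multiplying by $\eta = i\overline{\xi}$ and checking that the resulting real scalar is positive. The paper phrases the final simplification as $i\overline{\xi}\operatorname{sign}(\operatorname{Im}(\xi))(1-\xi^2) = 2\operatorname{sign}(\operatorname{Im}(\xi))\operatorname{Im}(\xi)$, while you write $\xi = e^{i\theta}$ and obtain $2|\sin\theta|$, which is the same quantity.
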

\begin{proof}
	For $\F=\R$, Example~\ref{ex:Devissage} established that $d \big( \mathfrak{e}(1,1,\xi,\R) \big)=(1)$ and this proves the assertion in the real case since $\eta=1$.
	For $\F=\C$, Example~\ref{ex:Devissage} established that~$d \big( \mathfrak{e}(1,1,\xi,\C) \big)=(\operatorname{sign(Im}(\xi))(1-\xi^2))$. 
Multiplying by $\eta=i\overline{\xi}$, we obtain that~$i\overline{\xi} \operatorname{sign(Im}(\xi))(1-\xi^2)=2\operatorname{sign(Im}(\xi)) \operatorname{Im}(\xi)$.
Since this real number is always positive, the assertion is also proved in the complex case. 
\end{proof}

The next proposition is implicit in~\cite[Section 4]{BorodzikConwayPolitarczyk}.

\begin{proposition}\label{prop:WittIsZ}
For $\xi \in S^1 \setminus \lbrace \pm 1\rbrace$,
the abelian group~$W(\F(t),\LF,p_\xi) \cong \Z$ is freely generated 
by~$\mathfrak{e}(1,1,\xi,\F)$.
\end{proposition}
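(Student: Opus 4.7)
The plan is to trace $\mathfrak{e}(1,1,\xi,\F)$ through the chain of isomorphisms that defines the signature jump (Definition~\ref{defn:SignatureJumpBasic}) and observe that, restricted to the $p_\xi$-primary summand, this chain is an isomorphism onto $\Z$.

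First, I would apply dévissage (Proposition~\ref{prop:Devissage}) to obtain
\[ d \colon W(\F(t),\LF,p_\xi) \xrightarrow{\cong} W_{\widetilde u}(\LF/p_\xi). \]
In both cases of interest the residue field is $\C$: for $\F=\R$ the polynomial $p_\xi = t - 2\re(\xi) + t^{-1}$ is symmetric (so $\widetilde u = 1$) and $\LR/p_\xi \cong \C$, while for $\F=\C$ the polynomial $p_\xi = t-\xi$ is $u$-symmetric with $u = -t^{-1}\overline\xi$ (so $\widetilde u = -\overline\xi{}^2$) and $\LC/p_\xi \cong \C$. In the real case $W(\C) \cong \Z$ via the signature (Sylvester). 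In the complex case, since $\xi \neq \pm 1$, one checks that $\widetilde u = -\overline\xi{}^2 = \eta \cdot \overline\eta{}^{-1}$ for $\eta = i\overline\xi$, so that multiplication by $\eta$ gives an isomorphism $W_{\widetilde u}(\C) \xrightarrow{\cong} W(\C) \cong \Z$. In either case we get
\[ W(\F(t),\LF,p_\xi) \cong \Z. \]

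Next, I would identify a generator. Restricted to the $p_\xi$-primary summand, the projection $\pi_{p_\xi}$ of Definition~\ref{defn:SignatureJumpBasic} is the identity, so on $W(\F(t),\LF,p_\xi)$ the homomorphism $\delta\sigma(\xi)$ is precisely the isomorphism to $\Z$ constructed above. By Lemma~\ref{lem:SignatureJumpBasic},
\[ (\delta\sigma_{\mathfrak{e}(1,1,\xi,\F)})(\xi) = 1, \]
so $\mathfrak{e}(1,1,\xi,\F)$ is mapped to a generator of $\Z$. Since $\delta\sigma(\xi)\restriction_{W(\F(t),\LF,p_\xi)}$ is an isomorphism, the class $[\mathfrak{e}(1,1,\xi,\F)]$ freely generates $W(\F(t),\LF,p_\xi)$.

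There is no real obstacle beyond assembling these pieces; the only point requiring a small check is that $\widetilde u$ has the asserted form so that the twisted Witt group $W_{\widetilde u}(\C)$ is indeed (non-canonically) isomorphic to $W(\C)$. In the complex case this hinges on $\xi \neq \pm 1$ ensuring $\eta = i\overline\xi$ yields a valid similarity $\widetilde u = \eta \overline\eta{}^{-1}$, which is a direct calculation.
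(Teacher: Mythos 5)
Your argument is correct, and it takes a genuinely different route from the paper's. The paper first establishes that $\mathfrak{e}(1,1,\xi,\F)$ generates $W(\F(t),\LF,p_\xi)$ by quoting the classification theorems of Borodzik--Conway--Politarczyk (Theorems~4.7 and~4.10 of~\cite{BorodzikConwayPolitarczyk}), which exhibit the elementary forms $\mathfrak{e}(2n+1,\xi,\varepsilon,\F)$ as generators; Remark~\ref{rem:Odd} then collapses each to $\mathfrak{e}(1,\varepsilon,\xi,\F)$, and since $\mathfrak{e}(1,-1,\xi,\F)$ is inverse to $\mathfrak{e}(1,1,\xi,\F)$ the latter generates. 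Only afterwards does the paper invoke Lemma~\ref{lem:SignatureJumpBasic} to conclude that the generator has infinite order, so the group is $\Z$. You instead derive $W(\F(t),\LF,p_\xi)\cong\Z$ directly as the composition of the d\'evissage isomorphism of Proposition~\ref{prop:Devissage}, the $\eta$-twist, and $\operatorname{sign}\colon W(\C)\xrightarrow{\cong}\Z$, and then use Lemma~\ref{lem:SignatureJumpBasic} to identify $\mathfrak{e}(1,1,\xi,\F)$ as the element mapping to~$1$. Your route is shorter and avoids the external classification theorems, at the cost of being less explicit about which forms actually appear; both arguments ultimately lean on the same d\'evissage machinery, since Remark~\ref{rem:Odd} is itself a consequence of it. One small inaccuracy: the identity $\widetilde u = -\overline\xi{}^2 = \eta\overline\eta{}^{-1}$ with $\eta = i\overline\xi$ holds for every $\xi\in S^1$, including $\xi=\pm 1$; the hypothesis $\xi\neq\pm 1$ is needed elsewhere (for instance to ensure $p_\xi$ is irreducible when $\F=\R$, and so that $t\pm 1$ acts invertibly on the $p_\xi$-primary summand), so your closing aside mislocates the role of that assumption, though this does not affect the validity of the proof.
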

\begin{proof}
	Generators of the Witt group of linking forms $W(\F(t),\LF, p_\xi)$ are the elementary linking forms~$\mathfrak{e}(2n+1,\xi,\varepsilon,\F)$ described in Example~\ref{ex:BasicPairingXiS1RealComplex}; see\cite[Theorems 4.7 and 4.10]{BorodzikConwayPolitarczyk}

	We have seen in Remark~\ref{rem:Odd}, that $\mathfrak{e}(2n+1,\xi,\varepsilon,\F)$ is Witt equivalent to $\mathfrak{e}(1,\xi,\varepsilon,\F)$. Since $\mathfrak{e}(1,-1,\xi,\F)$ is the
opposite of $\mathfrak{e}(1,1,\xi,\F)$, the form~$\mathfrak{e}(1,1,\xi,\F)$ generates the whole of~$W(\F(t),\LF, p_\xi)$.
By Lemma~\ref{lem:SignatureJumpBasic}, the signature jump~$(\delta \sigma)(\xi)$ induces an isomorphism~$W(\F(t),\LF,p_\xi)\cong \Z$, and $\mathfrak{e}(1,1,\xi,\F)$ generates it freely.
\end{proof}

\subsection{Relation to signature jumps}\label{sub:RelationToJump}

We relate the jumps of skew-isometric structures to the jumps of linking forms via the trace map $\chi \colon \F(t)/\LF \to \F$.
As a first step, we show that $\chi$ maps~$\mathfrak{e}(1,1,\xi,\F)$ to 
$\mathbf{e}(1,1,\xi,\F)$.
\begin{lemma}\label{lem:ChiFormToForm}
For $\xi \in S^1 \setminus \lbrace \pm 1\rbrace$,
	let $\mathbf{e}(1,1,\xi,\F)) \in W_- \op{Aut}(\F, p_\xi)$ be the skew-isometric structure of Examples~\ref{ex:Elementarye1R} and \ref{ex:Elementarye1C}. Let $\mathfrak{e}(1,1,\xi,\F) \in W(\F(t), \LF, p_\xi)$ be the linking form defined in Example~\ref{ex:BasicPairingXiS1RealComplex}. Then
\[ \chi(\mathfrak{e}(1,1,\xi,\F)) = \mathbf{e}(1,1,\xi,\F).\]
\end{lemma}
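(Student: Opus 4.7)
The strategy is to unpack the definitions and check that the two sides agree on the nose; there is no conceptual content beyond bookkeeping the Laurent expansions.

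First, I would observe that the underlying $\Lambda$-modules and isometries already agree without any work. For $\F=\R$ the module $\LR/p_\xi$ has $\R$-basis $\{1,t\}$ (since $tp_\xi=t^2-2\re(\xi)t+1$ gives the relation $t^2=2\re(\xi)t-1$), and in this basis multiplication by $t$ is the matrix $\bsm 0&-1\\1&2\re(\xi)\esm$ appearing in Example~\ref{ex:Elementarye1R}. For $\F=\C$ the module $\LC/(t-\xi)$ is $\C$ and multiplication by $t$ is $\xi$, matching Example~\ref{ex:Elementarye1C}. Since $\chi$ is $\F$-linear and every linking form is $\Lambda$-sesquilinear, the composition $\chi\circ\mathfrak{e}$ is automatically $\F$-sesquilinear and the isometry $t$ automatically preserves it. Thus the only thing left to check is that the scalar values of $\chi\circ\mathfrak{e}$ on basis vectors agree with the matrix entries given in Examples~\ref{ex:Elementarye1R} and~\ref{ex:Elementarye1C}.

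For the real case, I would compute $\chi(t^k/p_\xi)$ for $k\in\{-1,0,1\}$ by expanding $1/p_\xi$ in $\Gamma^+$ and in $\Gamma^-$. Writing $a=2\re(\xi)$, the identity $(t-a+t^{-1})\sum_n c_n t^n=1$ gives the recursion $c_{n+1}=ac_n-c_{n-1}+[n{=}0]$. Imposing that the series be bounded below yields the $\Gamma^+$-expansion $1/p_\xi=t+at^2+(a^2-1)t^3+\cdots$; imposing that it be bounded above yields the $\Gamma^-$-expansion $1/p_\xi=t^{-1}+at^{-2}+(a^2-1)t^{-3}+\cdots$. Reading off the constant term of $t^k$ times each of these (and taking the difference) gives $\chi(1/p_\xi)=0$, $\chi(t/p_\xi)=-1$, $\chi(t^{-1}/p_\xi)=1$. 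The four values $\chi(\mathfrak{e}(e_i,e_j))=\chi(e_i\overline{e_j}/p_\xi)$ on $\{e_1,e_2\}=\{1,t\}$ then assemble into the matrix $\bsm 0&1\\-1&0\esm$, exactly matching $\mathbf{e}(1,1,\xi,\R)$.

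For the complex case, I would evaluate the single scalar $\chi(\mathfrak{e}(1,1,\xi,\C)(1,1))=\op{sgn}(\im\xi)\cdot\chi\!\bigl(\tfrac{1-\xi t}{t-\xi}\bigr)$. Polynomial division gives $\tfrac{1-\xi t}{t-\xi}=-\xi+\tfrac{1-\xi^2}{t-\xi}$, and expanding $\tfrac{1}{t-\xi}$ shows it has constant term $-\bar\xi$ in $\Gamma^+$ and $0$ in $\Gamma^-$; hence $\chi(1/(t-\xi))=-\bar\xi$. Combining, $\chi\!\bigl(\tfrac{1-\xi t}{t-\xi}\bigr)=(1-\xi^2)(-\bar\xi)=\xi-\bar\xi=2i\im(\xi)$, and multiplication by $\op{sgn}(\im\xi)$ gives $2i|\im(\xi)|$, matching $\mathbf{e}(1,1,\xi,\C)$.

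No step is a serious obstacle: the only thing to be careful about is distinguishing the two expansions of $1/p_\xi$ (or of $1/(t-\xi)$) and applying $\op{const}\circ(i_+-i_-)$ consistently.
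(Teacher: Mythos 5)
Your proposal is correct and follows essentially the same approach as the paper: expand $1/p_\xi$ (resp.\ $1/(t-\xi)$) in $\Gamma^+$ and $\Gamma^-$, read off constant terms to evaluate $\chi$, assemble the matrix, and note the isometry. The only cosmetic differences are that you derive the Laurent coefficients from the recursion rather than the paper's nested geometric series $\sum_{k\ge0}(at-t^2)^k$, and in the complex case you perform the polynomial division $\tfrac{1-\xi t}{t-\xi}=-\xi+\tfrac{1-\xi^2}{t-\xi}$ before applying $\chi$ rather than multiplying $(1-\xi t)$ through both expansions; both yield identical results.
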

\begin{proof}
	First we focus on the case~$\F = \R$.
Recall the formula for the elementary linking form $\lambda := \mathfrak{e}(1,1,\xi,\R)$:
\begin{align*}
\mathfrak{e}(1,1,\xi,\R) \colon \frac{\LR}{(t-\xi)(1-\overline{\xi} t^{-1})} \times \frac{\LR}{(t-\xi)(1-\overline{\xi} t^{-1})}  &\to \R(t)/\LR  \\
(x,y) &\mapsto  \frac{x \cdot \ol y}{(t-\xi)(1-\overline{\xi} t^{-1})}.
\end{align*}
Note that $(t-\xi)(1-\overline{\xi} t^{-1})=t-2\operatorname{Re}(\xi)+t^{-1}$ is a real symmetric polynomial.
For conciseness, we set $a:=2\operatorname{Re}(\xi)$.
We start with a preliminary computation:
\begin{align}
\label{eq:PrelimComputElementarye1R}
&i_+\left( \frac{1}{t-a+t^{-1}} \right)
=i_+\left( \frac{1}{t^{-1}(1-(at-t^2)} \right)
=t \sum_{k \geq 0} (at-t^2)^k, \\
&i_-\left( \frac{1}{t-a+t^{-1}} \right)
=i_-\left( \frac{1}{t(1-(at^{-1}-t^{-2})} \right)
=t^{-1} \sum_{k \leq 0} (at-t^2)^k. \nonumber
\end{align}
We now compute the skew-Hermitian form $\chi( \mathfrak{e}(1,1,\xi,\R))$.
Viewing $H$ as a real vector space of dimension $2$, we choose $1$ and $t$ as generators.
Observe that since the expressions in~\eqref{eq:PrelimComputElementarye1R} have no constant terms, we immediately deduce that
\begin{align*}
&\mu(t,t)=\mu (1,1)
=\chi \lambda (1,1)
=0.
\end{align*}
It therefore only remains to compute the value of $\mu(t,1)$ since $\mu(1,t)=-\mu(1,t)$.
First, we observe that~$\mu(t,1)=(\chi \circ \lambda) (t,1)=\chi\big( t \cdot \lambda (1,1)\big)$.
We therefore compute
\begin{align*}
&i_+\left( \frac{t}{t-a+t^{-1}} \right)
=t^2 \sum_{k \geq 0} (at-t^2)^k
=\ldots+0+\ldots, \\
&i_-\left( \frac{t}{t-a+t^{-1}} \right)
= \sum_{k \leq 0} (at-t^2)^k
=\ldots+1+\ldots .
\end{align*}
We deduce that 
\[ \mu(t,1)=(\chi \circ \lambda) (t,1)= \chi\big( t \lambda (1,1)\big)=0-(1)=-1. \]
Therefore, we have shown that the skew-Hermitian form $\mu:= \chi( \mathfrak{e}(1,1,\xi,\R))$ is represented by~$\bsm 0&1 \\ -1&0 \esm$.
We conclude by computing the isometry: we have $t \cdot 1=t$ and, since $t^2-2t\operatorname{Re}(\xi)+1=0$, we deduce that $t\cdot t=t^2=2t\operatorname{Re}(\xi)-1$.
This shows that the isometry is represented by~$\bsm 0&-1 \\ 1&2\operatorname{Re}(\xi) \esm$.

We move on to the complex case~$\F = \C$.
Recall the formula for the elementary linking form  $\mathfrak{e}(1,1,\xi,\C)$:
\begin{align*}
\lambda:=\mathfrak{e}(1,1,\xi,\C) \colon \frac{\LC}{(t-\xi)} \times \frac{\LC}{(t-\xi)}  &\to \C(t)/\LC  \\
(x,y) &\mapsto \operatorname{sign}(\operatorname{Im}(\xi))\frac{1-\xi t}{t-\xi} x\ol y.
\end{align*}
We start with a preliminary computation:
\begin{align*}
&i_+\left( \frac{1}{t-\xi} \right)
=i_+\left( \frac{1}{-\xi(1-t\overline{\xi})} \right)
=-\overline{\xi} \sum_{k \geq 0} \overline{\xi}^{k}t^k, \\
&i_-\left( \frac{1}{t-\xi} \right)
=i_-\left( \frac{1}{t(1-\xi t^{-1})} \right)=t^{-1}\sum_{k \geq 0} \xi^kt^{-k}.
\end{align*}
We now multiply by $1-\xi t$ and compute constant terms:
\begin{align*}
&i_+\left( \frac{1-\xi t}{t-\xi} \right)
=-(1-\xi t)\overline{\xi} \sum_{k \geq 0} \overline{\xi}^{k}t^k=(t-\overline{\xi})\sum_{k \geq 0} \overline{\xi}^{k}t^k= +\ldots-\overline{\xi}+\ldots, \\
&i_-\left( \frac{1-\xi t}{t-\xi} \right)
=t^{-1}(1-\xi t) \sum_{k \geq 0} \xi^kt^{-k} 
=(t^{-1}-\xi) \sum_{k \geq 0} \xi^kt^{-k}
=\ldots -\xi+\ldots.
\end{align*}
We deduce that 
\[  \chi \left(\frac{1-\xi t}{t-\xi}\right)=-\overline{\xi}-(-\xi)=2i\operatorname{Im}(\xi).  \]
Multiplying by $\operatorname{sign}(\operatorname{Im}(\xi))$ shows that 
$\chi ( \mathfrak{e}(1,1,\xi,\C))$ is represented by~$\mathbf{e}(1,1,\xi,\C)$.
We conclude by computing the isometry: since we have $t-\xi=0$ in $\LC/(t-\xi)$, we deduce that~$t \cdot 1=t=\xi$, as claimed. This concludes the proof of the claim.
\end{proof}

Combining the result below with Theorem~\ref{thm:BlanchfieldMilnor} immediately yields Theorem~\ref{thm:SignaturesAreEqual} from the introduction.
\begin{theorem}\label{thm:MainMain}
Let $(H,\lambda)$ be a linking form over $\LF$ and let $p_\xi$ be a basic polynomial, with $\xi \in S^1 \setminus \lbrace \pm 1 \rbrace$.
The signature jump of $\lambda$ at $\xi$ coincides with the Milnor signature of the skew-isometric structure $\chi \circ \mu$ at $\xi$:
\[ 
\sigma_{(H,\chi \circ \mu,t)}(\xi)=
\begin{cases}
 -2\delta \sigma_{(H,\lambda)}(\xi)  &\quad \text{ if } \F=\R, \\
\operatorname{sign}(\operatorname{Im}(\xi))\delta \sigma_{(H,\lambda)}(\xi) &\quad \text{ if } \F=\C.  
\end{cases}
\]
\end{theorem}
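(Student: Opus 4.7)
The plan is to reduce the identity to a single computation on the distinguished generator of a cyclic Witt group, exploiting the fact that both sides of the formula are homomorphisms that factor through the $p_\xi$-primary summand.

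First, I would check that the trace construction descends to a well-defined homomorphism of Witt groups
\[ \chi_* \colon W(\F(t), \LF, p_\xi) \longrightarrow W_{-}\operatorname{Aut}(\F, p_\xi), \qquad [(H, \lambda)] \mapsto [(H, \chi \circ \lambda, t)]. \]
That $(H, \chi \circ \lambda, t)$ is a skew-isometric structure uses Proposition~\ref{prop:trace}\eqref{item:ChiOverline} for the skew-Hermitian property, while non-singularity follows because the adjoint of $\chi \circ \lambda$ factors as the $\F$-isomorphism $\chi \colon \Hom_\Lambda(H, Q/\Lambda) \to \Hom_\F(H, \F)$ of Proposition~\ref{prop:trace}\eqref{item:TorsionHom} composed with the $\Lambda$-adjoint of the non-singular form $\lambda$. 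The isometry property of $t$ is automatic from the $\Lambda$-sesquilinearity of $\lambda$ together with the fact that $\chi$ only depends on the class in $Q/\Lambda$. The delicate point is Witt-invariance: if $L \subset H$ is a metabolizer of $\lambda$, then $L$ is $t$-invariant and $L \subset L^{\perp_{\chi \circ \lambda}}$ is immediate, but the reverse containment requires that for $y \in L^{\perp_{\chi \circ \lambda}}$ the $\Lambda$-linear functional $\lambda(-, y)\vert_L \in \Hom_\Lambda(L, Q/\Lambda)$ vanishes as soon as its image under $\chi$ does; this is exactly the injectivity in Proposition~\ref{prop:trace}\eqref{item:TorsionHom} applied to the $\Lambda$-torsion module $L$. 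I expect this verification of Witt-invariance to be the main technical point of the argument.

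With $\chi_*$ in hand, the reduction is routine: the signature jump $\delta\sigma(\xi)$ factors through $W(\F(t), \LF, p_\xi)$ by Definition~\ref{defn:SignatureJumpBasic}, and $\sigma(\xi) \circ \chi_*$ factors through the same group via Corollary~\ref{cor:SymmSignature} and the compatibility of $\chi_*$ with the $p_\xi$-primary projection (which is clear from the $\Lambda$-linearity of $\chi$ on Hom groups). Both sides of the theorem are therefore group homomorphisms out of $W(\F(t), \LF, p_\xi)$, and by Proposition~\ref{prop:WittIsZ} this group is infinite cyclic and freely generated by the elementary linking form $\mathfrak{e}(1,1,\xi, \F)$; hence it suffices to verify the identity on this single class.

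Finally, I would evaluate both sides on the generator. Lemma~\ref{lem:ChiFormToForm} identifies $\chi_*\bigl(\mathfrak{e}(1,1,\xi, \F)\bigr)$ with the elementary skew-isometric structure $\mathbf{e}(1,1,\xi, \F)$ of Examples~\ref{ex:Elementarye1R} and \ref{ex:Elementarye1C}, whose Milnor signature is computed in Example~\ref{ex:SymmetrizationExamples} to be $-2$ for $\F=\R$ and $-\operatorname{sign}(\operatorname{Im}(\xi))$ for $\F=\C$. Combined with $\delta\sigma(\xi)\bigl(\mathfrak{e}(1,1,\xi, \F)\bigr) = 1$ from Lemma~\ref{lem:SignatureJumpBasic}, plugging into the proposed formula confirms the identity on the generator, and hence on all of $W(\F(t), \LF, p_\xi)$.
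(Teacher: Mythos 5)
Your proposal is correct and follows the same route as the paper: reduce to the generator $\mathfrak{e}(1,1,\xi,\F)$ of the infinite cyclic group $W(\F(t),\LF,p_\xi)$, identify $\chi(\mathfrak{e}(1,1,\xi,\F))$ with $\mathbf{e}(1,1,\xi,\F)$ via Lemma~\ref{lem:ChiFormToForm}, and compare the values produced by Lemma~\ref{lem:SignatureJumpBasic} and Example~\ref{ex:SymmetrizationExamples}. You additionally spell out the Witt-invariance of the trace construction (via the injectivity in Proposition~\ref{prop:trace}\eqref{item:TorsionHom}) and its compatibility with primary projection, which the paper leaves implicit when labeling the vertical arrows of diagram~\eqref{eq:DiagramWitt} as isomorphisms.
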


\begin{proof}
As both signatures are invariant under Witt equivalence; we can show the equality on $W(\F(t),\LF)$.
As in Subsection~\ref{sub:WittLinkingForm}, we set $\widetilde{u}:=1,\eta:=1$ if~$\F=\R$ and $\widetilde{u}:=-\overline{\xi}^2,\eta:=i\overline{\xi}$ if~$\F=\C$.
We also set $c_\R:=-2$ and~$c_\C:=-\operatorname{sign}(\operatorname{Im}(\xi))$.
We claim that the theorem, that is the equation~$\sigma_{(H,\chi \circ \lambda,t)}(\xi)=c_\F \cdot \delta \sigma_{(H,\lambda)}(\xi)$, will follow
if we manage to show that the following diagram commutes:
\begin{equation}
\label{eq:DiagramWitt}
\begin{tikzcd}
W(\F(t),\LF) \ar[r,"\pi_{p_\xi}"] \ar{d}{\chi,\cong} 
& W(\F(t),\LF,p_\xi) \ar{r}{d,\cong} \ar{d}{\chi,\cong} 
& W_{\widetilde{u}}(\C) \ar{r}{\eta,\cong}
& W(\C) \ar{r}{\sign,\cong}
& \Z \ar{d}{\cdot c_\F}
\\
W_-\operatorname{Aut}(\F) \ar{r}{\pi_{p_\xi}}
& W_-\operatorname{Aut}(\F,p_\xi) \ar{r}{\operatorname{Sym}}
& W\operatorname{Aut}(\F,p_\xi) \ar{r}{\operatorname{forget}}
& W(\F)  \ar{r}{\sign,\cong}
&\Z.
\end{tikzcd}
\end{equation}
If we travel through the top route, then we obtain~$c_\F \delta \sigma_{(H,\lambda)}(\xi)$ by Definition~\ref{defn:SignatureJumpBasic}, while traveling through the bottom route produces 
$\sigma_{(H,\chi \circ \lambda,t)}(\xi)$ by Corollary~\ref{cor:SymmSignature}. 
If the diagram commutes, then we obtain $\sigma_{(H,\chi \circ \lambda,t)}(\xi)=c_\F \delta \sigma_{(H,\lambda)}(\xi)$, as desired.
 
We check that the diagram commutes for every linking form by verifying it on the generator~$\mathfrak{e}(1,1,\xi,\F) \in W(\F(t),\LF,p_\xi)$; see Proposition~\ref{prop:WittIsZ}.
Using Lemma~\ref{lem:SignatureJumpBasic}, if we follow the top route of the diagram for $\mathfrak{e}(1,1,\xi,\F)$, then we obtain
\[ c_\F \cdot \delta \sigma_{\mathfrak{e}(1,1,\xi,\F)}(\xi)=c_\F.\]

We must therefore show that the image of $\mathbf{e}(1,1,\xi,\F)$ via the bottom route of the diagram in~\eqref{eq:DiagramWitt} also equals $c_\F$. 
The relation~$\chi(\mathfrak{e}(1,1,\xi,\F)) =\mathbf{e}(1,1,\xi,\F)$ holds by Lemma~\ref{lem:ChiFormToForm}, and we then use Example~\ref{ex:SymmetrizationExamples} 
to deduce that
\[
\sigma_{\mathbf{e}(1,1,\xi,\F)}(\xi) =
\sign \circ \operatorname{forget} \circ \operatorname{Sym}\mathbf{e}(1,1,\xi,\F)= c_\F.
\] 
\end{proof}

\bibliographystyle{alpha}
\bibliography{hom}
\end{document}